%% file: main.tex
\documentclass[a4paper,12pt,reqno,openany]{amsart} %
\usepackage[verbose]{geometry}
\pdfoutput=1                    %

\input{toinclude}

\usepackage{svg}
\usepackage{subcaption}         %

\hypersetup{
  pdftitle={},
  pdfauthor={Gianmarco Brocchi and Andreas Rosén},
  colorlinks=true,
  citecolor=[rgb]{0.682, 0.133, 0.188}, %
  linkcolor=[rgb]{0,0,0.5},             %
  urlcolor=[rgb]{0,0,0.75},     
  pdfstartview=FitH,
  pdfdisplaydoctitle=true,
  pdflang=en-GB,
  breaklinks=true
}

\usepackage{todonotes}
\providecommand{\inhom}[1]{#1}
\providecommand{\homg}[1]{{#1}^0}

\usepackage[backend=biber,backref=true,style=alphabetic]{biblatex}
\AtEveryBibitem{%
  \clearlist{language}%
}
\usepackage{csquotes}           %
\theoremstyle{plain}
\newtheorem{theorem}{Theorem}[section]  %
\newtheorem{cor}[theorem]{Corollary}    %
\newtheorem{lemma}[theorem]{Lemma}
\newtheorem{prop}[theorem]{Proposition}

\theoremstyle{definition}
\newtheorem{define}[theorem]{Definition}
\theoremstyle{remark}
\newtheorem{remark}[theorem]{Remark}

\numberwithin{equation}{section} %
\allowdisplaybreaks[4] %

\title[quadratic estimates on cylinders]{Global quadratic estimates for degenerate elliptic operators on cylinders}
\author[Brocchi]{Gianmarco Brocchi}
\address{
  Háskóli Íslands \\
  Tæknigarður \\
  Dunhagi 5 \\
  107 Reykjavík \\
  Ísland}
\email{gianmarco@hi.is}
\author[Rosén]{Andreas Rosén}
\address{
  Mathematical Sciences \\
  Chalmers University of Technology and the University of Gothenburg \\
  SE-412 96 Göteborg \\
  Sweden}
\email{andreas.rosen@chalmers.se}

\begin{document}
\subjclass[2020]{42B37, 35J70, 47B12}
\keywords{Riemannian manifolds, Muckenhoupt weights, anisotropically degenerate coefficients,
  square function, holomorphic functional calculus, weighted norm inequalities}
\begin{abstract}                %
  On $d$-dimensional cylinders $\mathcal{C}= \mathbb{R}^k\times N$, with a closed manifold $N$ as base and large scale dimension $k\in[1,d)$,
  we prove quadratic estimates in weighted $L^2$ space for Dirac operators perturbed by bounded, measurable and accretive coefficients.
  This gives in particular homogeneous Kato square root estimates on $\mathcal{C}$ for Riesz transforms associated with second order divergence form elliptic operators,
  having measurable coefficients with degeneracy governed by a Muckenhoupt $A_2$ weight.
  By localisation and scaling, it also yields local quadratic estimates for perturbed Dirac operators on general manifolds with locally thin cylindrical geometry, and possibly with zero injectivity radius.
\end{abstract}

\maketitle
\tableofcontents%

\section{Introduction}

\input{sections/intro}

\section{Preliminaries}\label{sec:prelims}

\input{sections/prelims}
\subsection{Weighted estimates for unperturbed operators}\label{sec:unperturbed}

\input{sections/unperturbed}
\section{Global quadratic estimates on cylinders}\label{sec:globalQE}

\input{sections/globalQE}

\subsection{Carleson estimate}\label{subsec:carleson_estimate}

\input{sections/Carleson_estimate}
\section{Quadratic estimates on locally cylindrical manifolds}\label{sec:localQE_M}

\input{sections/zero_inj}
\appendix
\section{Poincaré estimates}\label{appx:poincare}

\input{sections/poincare-estimates}
\section{Singular integrals}\label{appx:singular_integrals}

\input{sections/singular_integrals}

\section*{Acknowledgements}
G.B. was supported by the Knut and Alice Wallenberg foundation,
KAW grant 2020.0262 %
postdoctoral program in Mathematics for researchers from outside Sweden.
A.R. was supported by Grant 2022-03996 from the Swedish research council, VR.

\printbibliography%

\end{document}

%% file: toinclude.tex
\usepackage[english]{babel}
\usepackage[utf8]{inputenc}

\usepackage{mathtools}
\usepackage{amsmath, amssymb, amsthm}
\usepackage{enumitem}
\usepackage{esint}              %

\usepackage{stmaryrd}               %
\usepackage{mathrsfs}           %
\usepackage{cases}              %

\usepackage{graphicx}           %

\usepackage{tikz}
\usetikzlibrary{decorations, patterns, cd} %
\usepackage[hang, small, bf]{caption}

\usepackage[pdftex, colorlinks=true, linkcolor=blue, citecolor=magenta, urlcolor=blue]{hyperref}
\usepackage[noabbrev,capitalize]{cleveref} %
\crefformat{section}{\S#2#1#3} %
\crefformat{subsection}{\S#2#1#3}
\crefformat{subsubsection}{\S#2#1#3}

\providecommand{\romannum}[1]{%
  \textup{\uppercase\expandafter{\romannumeral#1}}%
}

\newcommand{\D}{ \, \mathrm{d}}
\providecommand{\1}{ \mathbf{1}} %
\newcommand{\supp}{ \operatorname{supp}} %

\newcommand{\F}{\mathcal{F}}

\DeclareMathAlphabet\EuScript{U}{eus}{m}{n}
\def\Yint#1{\mathchoice
    {\YYint\displaystyle\textstyle{#1}}%
    {\YYint\textstyle\scriptstyle{#1}}%
    {\YYint\scriptstyle\scriptscriptstyle{#1}}%
    {\YYint\scriptscriptstyle\scriptscriptstyle{#1}}%
      \!\iint}
\def\YYint#1#2#3{{\setbox0=\hbox{$#1{#2#3}{\iint}$}
    \vcenter{\hbox{$#2#3$}}\kern-.51\wd0}}
\def\longdash{{-}\mkern-3.5mu{-}} 

\def\fiint{\Yint\longdash}

\providecommand{\Real}{\ensuremath \Re\mathfrak{e}} %
\providecommand{\R}{\ensuremath \mathbb{R}}

\DeclareMathAlphabet\mathcalboondox{U}{BOONDOX-calo}{m}{n} %

\providecommand{\transpose}{\ensuremath \intercal} %

\providecommand{\dom}{\ensuremath \mathsf{dom}}
\providecommand{\nul}{\ensuremath \mathsf{ker}}
\providecommand{\ran}{\ensuremath \mathsf{im}}
\newcommand{\clos}[1]{\overline{#1}}

\DeclareMathAlphabet{\mathcalboondox}{U}{BOONDOX-calo}{m}{n}
\SetMathAlphabet{\mathcalboondox}{bold}{U}{BOONDOX-calo}{b}{n}
\DeclareMathAlphabet{\mathbcalboondox}{U}{BOONDOX-calo}{b}{n}

%% file: sections/intro.tex
The classical $L^p(\mathbb{R}^d)$ boundedness, $1<p<\infty$, of the Riesz transform $\nabla(-\Delta)^{-1/2}$
has a deep and far-reaching extension in the celebrated Kato square root estimate
\begin{equation*}
  \lVert \sqrt{-\mathrm{div} A \nabla} u \rVert_{L^2(\mathbb{R}^d)} \eqsim \lVert \nabla u \rVert_{L^2(\mathbb{R}^d)},
\end{equation*}
proved in~\cite{SolKato2002} for general bounded, measurable, and accretive coefficient matrices $A=A(x)$.
The singular integral operators appearing here are beyond classical Calder\'on--Zygmund operators,
as they may lack pointwise kernel estimates and may fail to be bounded for general $p \in (1,\infty)$.
Boundedness on $L^2$ is instead handled through functional calculus and quadratic estimates,
and here we follow the first order framework from~\cite{AKMc}, 
using a weighted Dirac type operator $D$ perturbed by a multiplier $B$, related to the coefficient matrix $A$.

In~\cite[Theorem 1.1]{AMR},
the quadratic estimate
\begin{equation}\label{eq:scalar_weighted_QE}
  \int_0^\infty \lVert t \inhom{D}\inhom{B}(I + (t \inhom{D}\inhom{B})^2)^{-1} u \rVert^2_{L^2(M,w)} \frac{\mathrm{d}t}{t} \lesssim \lVert u \rVert^2_{L^2(M,w)}
\end{equation}
was proved for certain first-order differential operators
$\inhom{D}\inhom{B}$ with bounded, accretive coefficients $\inhom{B}$ %
on complete Riemannian manifolds $M$ with lower bounds on Ricci curvature
and injectivity radius.
The $L^2$-norm is weighted by a local Muckenhoupt weight $w$, see \cref{subsec:weights}.
This estimate is a state-of-the-art extension of the Kato square root estimate, 
generalising $\mathbb{R}^d$ to a Riemannian manifold $M$
and $L^2(\mathbb{R}^d,\mathrm{d}x)$ to the weighted space $L^2(M,w)$.
By a result of Anderson and Cheeger \cite{AndersonCheeger92},
the geometric hypothesis on $M$ entails
that, uniformly on some fixed scale $t_0 >0$,
the manifold $M$ is locally Lipschitz equivalent
to the flat Euclidean space $\mathbb{R}^d$.
This equivalence is used in~\cite{AMR} for the proof of the quadratic estimate~\eqref{eq:scalar_weighted_QE}.

The assumption that the injectivity radius is positive excludes many manifolds, 
such as those with a thin cylindrical geometry at infinity, and it is desirable to remove this assumption. 
As a motivating application, consider the matrix-weighted Kato square root estimate
\begin{equation}\label{eq:matrix_weighted_Kato}
    \lVert \sqrt{ - (1/a) \mathrm{div} A \nabla  } u \rVert_{L^2(\mathbb{R}^d, \mu)} \eqsim \lVert \nabla u \rVert_{L^2(\mathbb{R}^d, W)}
\end{equation}
where $\mu$ and $W$ are a scalar and a matrix weight, and the coefficients $W^{-1/2} A W^{-1/2}$ and $a/\mu$ are uniformly bounded and accretive.
It was shown in the companion paper~\cite[Corollary 2.6]{BrR1} to the present one,
that (an inhomogeneous version of) the matrix-degenerate estimate~\eqref{eq:matrix_weighted_Kato} reduces to the scalar weighted estimate~\eqref{eq:scalar_weighted_QE}
on a manifold $M$ whose metric is described by the matrix weight $W$.
Natural examples of such manifolds are not locally Lipschitz equivalent to $\mathbb{R}^d$,
and the proof in~\cite{AMR} does not apply.
In particular, the injectivity radius of such $M$ may be zero.
To illustrate this, consider the matrix weight $W$ on $\mathbb{R}^2 \setminus \{(0,0)\}$
from \cite[Example 2.8]{BrR1} that describes a
degeneracy of coefficients $A$.
The unit discs in the metric described by $W$ are
shown in \cref{fig:ellipses2}.

\begin{figure}[th]
  \includegraphics[width=8cm]{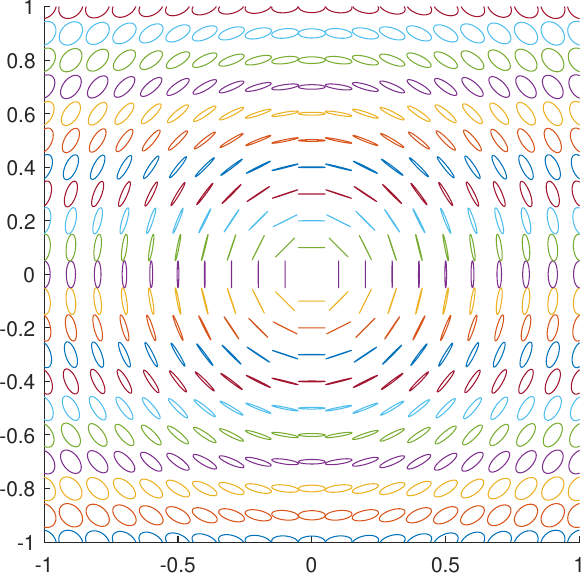}
  \caption{Geodesic disks in the metric given by $W$ are ellipses with increasing eccentricity.}\label{fig:ellipses2}
\end{figure}

In this example, the matrix weight $W$ was chosen so that the manifold $M$ described by 
$W$ equals the graph of the function
$(x^2+y^2)^{-1}$, which is what we see in 
\cref{fig:ellipses2} from above.
We note that the infinity of $M$ near the origin is an increasingly thin cylinder and here
the injectivity radius goes to zero.

In the present paper, we extend~\cite{AMR} to manifolds that are locally Lipschitz equivalent to thin cylinders,
which means that geodesic balls $B_M$ on $M$ satisfy %
\begin{equation*}
  B_M(z,t_0) \simeq (0,1)^k \times \varepsilon N
\end{equation*}
for $z\in M, t_0\eqsim 1$, 
where $(0,1)^k$ denotes the unit cube in $\mathbb{R}^{k}$,
$N$ is a connected, closed manifold (= compact, without boundary) of fixed dimensions $n$ 
and the radius $\varepsilon=\varepsilon(z)\in(0,1)$ is allowed to have $\inf_{z\in M} \varepsilon(z)=0$.
The two main results of our paper are:
\begin{itemize}
\item \cref{thm:QE_zero_inj_radius}, which proves quadratic estimates for locally cylindrical manifolds.
  The quadratic estimates in \cref{thm:QE_zero_inj_radius} are local in the sense that the non-trivial part of the integral to bound is $\int_0^1$, since the operator ${D}_M$ featuring in~\eqref{eq:QE_M} in has lower order terms causing ${D}_M {B}_M$ to have closed range.

\item \cref{thm:global_QE_cylinder}, which contains the local and main part of the estimate \eqref{eq:QE_M},
  and proves global quadratic estimates of cylinders $\mathcal{C} \coloneqq \mathbb{R}^k \times N$.
  The quadratic estimates here are indeed global, since the operator $D^0$ featuring there is a homogeneous first order operator but still all scales $\int_0^\infty$ on the cylinder are controlled.
  Below in this introduction, we outline how rescaling yields the local estimates in \cref{thm:global_QE_cylinder}.
\end{itemize}

\begin{figure}[bh]
  \centering
  \includesvg[scale=1.2]{figures/cylinder}
  \caption{The cylinder $\mathbb{R}^k \times N$.
    The horizontal variable $x$ is along $\mathbb{R}^k$,
  while the vertical variable $y$ parametrizes the compact base manifold $N$.}\label{fig:cylinder}
\end{figure}

These results are new, and in particular give new results for matrix weighted quadratic estimates and Kato square root estimates.
Indeed,~\cite[Theorem 2.4]{BrR1}
and~\cite[Corollary 2.6]{BrR1}
continue to hold if the hypothesis made there that
``$M$ has Ricci curvature bounded from below and positive injectivity radius''
is weakened to 
``$M$ is a locally cylindrical manifold  satisfying hypotheses (H1)–(H3)'' as in
\cref{thm:QE_zero_inj_radius} in the present paper.

We remark that in the unweighted case $w \equiv 1$, the quadratic estimate~\eqref{eq:scalar_weighted_QE} for 
two-dimensional manifolds with local cylinder geometry $[0,1] \times \varepsilon \mathbb{S}^1$,
follows from~\cite[Theorem 6.2]{BandaraMcIntoshGBG}.
More generally this theorem applies whenever the tangent bundle $T N$
has a global frame of parallel vector fields.  
In the present paper,
we do not assume that $M$ has generalised bounded geometry as in~\cite[Definition 2.5]{BandaraMcIntoshGBG}.
In particular, we do not assume the existence of a global frame for $T N$.  
Our extension of~\cite{AMR} in this work applies to a large class of manifolds with zero injectivity radius.
It was already noted in \cite{Lashi_rough}
that $\mathrm{inj}(M) > 0$ is not necessary for proving the Kato square root estimate.

Next, remarks on homogeneity and scaling are in order.
The quadratic estimate \eqref{eq:scalar_weighted_QE}
uses an inhomogeneous first order differential operator $\inhom{D}$
with zero-order terms, so that $\inhom{D}$ has closed range.
As a consequence, the estimate for large scales $t > t_0$ in \eqref{eq:scalar_weighted_QE} is 
trivial, while
the estimate for small scales $t \in (0,t_0)$ follows from that for %
the homogeneous principal part $\homg{D}$ of $\inhom{D}$,
see \cite[Lemma 2.4]{AMR} and \cref{subsec:proof_zero_inj}.
The strategy in \cite{AMR} is to localise the estimate~\eqref{eq:scalar_weighted_QE} to balls $B_M(z,t_0)$
and pull it back to $\mathbb{R}^d$.
Similarly, in this paper we pull back the estimate to the thin cylinders $(0,1)^k \times \varepsilon N$.
By rescaling $(0,1)^k \times \varepsilon N$ to a long cylinder $\frac{1}{\varepsilon} (0,1)^k \times N$,
the local estimate reduces to proving 
\begin{equation}\label{eq:QE_on_rescaled_cylinder}
  \int_0^{t_0/\varepsilon} \lVert s \homg{D}\inhom{B}(I + s^2(\homg{D}\inhom{B})^2)^{-1} u \rVert^2_{L^2(\mathbb{R}^{k}\times N, w)} \frac{\mathrm{d}s}{s} \lesssim \lVert u \rVert^2_{L^2(\mathbb{R}^{k}\times N, w)},
\end{equation}
for the corresponding class of weights $w$ on 
$\mathbb{R}^{k}\times N$.
Note that this estimate must be uniform in $\varepsilon > 0$,
so the estimate for scales $s < t_0/\varepsilon$ is no easier than the one for all $s < \infty$.
Note also that the estimate for the large scales in \eqref{eq:QE_on_rescaled_cylinder}
is no longer trivial, since we have $\homg{D}$ and not $\inhom{D}$
as our self-adjoint differential operator.
Rather it is now the estimate for small scales which is known,
since that follows from \cite[Theorem 1.1]{AMR} upon replacing $M$ by the cylinder $\mathcal{C} \coloneqq \mathbb{R}^{k}\times N$, since this manifold has lower bounds on Ricci curvature and injectivity radius.

At the technical level, a main novelty of the present paper is in \cref{lemma:half_principal_part}
and \cref{prop:principal_part}: a new way to avoid dealing with
the averages of tangent vector fields over large dyadic ``cubes'' in the cylinder $\mathcal{C}$ in the proof of Theorem~\ref{thm:global_QE_cylinder},
which are only well-defined (coordinate-wise)
if there exists a global frame for $T N$.
Another important novelty in the proof is that the standard sectorial decomposition used in the Carleson measure estimate does not apply here. Instead, we are forced to use the refined sectorial 
decomposition from~\cite{Andreas_localTb}.
See \cref{subsec:carleson_estimate}.

Our Theorem~\ref{thm:global_QE_cylinder} addresses a question posed by Alan McIntosh concerning the validity of homogeneous Kato square root estimates on manifolds.
Previously, homogeneous Kato square root estimates
and homogeneous quadratic estimates have only been known for $\mathbb{R}^d$ and for compact manifolds.
Our main result \cref{thm:global_QE_cylinder}
implies the following.

\begin{cor}
  Let $w\in A_2(\mathcal{C})$ be a weight on the cylinder $\mathcal{C} \coloneqq \mathbb{R}^{k} \times N$, with a closed manifold $N$ as base, as in Section~\ref{sec:prelims}.
  Then the (weighted) homogeneous Kato square root estimate
\begin{equation*}
  \lVert \sqrt{-w^{-1}\mathrm{div}w A \nabla} u \rVert_{L^2(\mathcal{C},w)} \eqsim \lVert \nabla u \rVert_{L^2(\mathcal{C},w)}
\end{equation*}
holds.
\end{cor}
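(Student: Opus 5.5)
The plan is to deduce the corollary from Theorem~\ref{thm:global_QE_cylinder} via the standard first-order framework of \cite{AKMc}. On $L^2(\mathcal{C},w)\oplus L^2(\mathcal{C},w;T\mathcal{C})$ we take the homogeneous weighted Dirac operator
\begin{equation*}
  \homg{D}=\begin{pmatrix} 0 & -w^{-1}\mathrm{div}\, w \\ \nabla & 0\end{pmatrix},
\end{equation*}
which is self-adjoint in the weighted space because $-w^{-1}\mathrm{div}\,w$ is the $L^2(w)$-adjoint of $\nabla$. We pair it with the multiplier $B=\mathrm{diag}(I,A)$, which is bounded and accretive on $\overline{\ran(\homg{D})}$ since $A$ is bounded and accretive on gradient fields. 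Then
\begin{equation*}
  (\homg{D}B)^2=\mathrm{diag}\bigl(-w^{-1}\mathrm{div}\, wA\nabla,\;\nabla(-w^{-1}\mathrm{div}\,wA)\bigr).
\end{equation*}
Theorem~\ref{thm:global_QE_cylinder} gives the quadratic estimate for $\homg{D}B$ over all scales $t\in(0,\infty)$. By the same theorem applied to $B^*$, together with duality (the argument of \cite{AKMc}), the reverse quadratic estimate also holds. Hence $\homg{D}B$ has a bounded $H^\infty$ functional calculus on $\overline{\ran(\homg{D}B)}$. In particular $\mathrm{sgn}(\homg{D}B)$ is bounded, and $\sqrt{(\homg{D}B)^2}\,u\eqsim \homg{D}Bu$ holds in norm for $u\in\dom(\homg{D}B)$.

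To conclude, we apply this to $u=(f,0)^\transpose$ with $f\in\dom(\nabla)$. This gives
\begin{equation*}
  \lVert\sqrt{-w^{-1}\mathrm{div} wA\nabla}\,f\rVert_{L^2(w)}=\lVert\sqrt{(\homg{D}B)^2}\,u\rVert\eqsim\lVert\homg{D}Bu\rVert=\lVert\nabla f\rVert_{L^2(w)}.
\end{equation*}
This requires checking two things. First, the square root restricted to the scalar component coincides with the square root of the second-order operator; this holds because $\homg{D}B$ preserves the even/odd splitting, so functional calculus acts componentwise. Second, we must handle the null space $\nul(\homg{D}B)$: the scalar part is the constants, and we remove them by projecting onto $\overline{\ran}$. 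The homogeneous setting is exactly where this matters, since on $\mathcal{C}$ with $k\ge1$ constants are not in $L^2$ (the volume is infinite), so $\nul(\nabla)=\{0\}$ in $L^2(w)$. The splitting $L^2=\nul(\homg{D}B)\oplus\overline{\ran(\homg{D}B)}$ follows from the quadratic estimates.

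The main point needing care is that the operator in Theorem~\ref{thm:global_QE_cylinder} is set up exactly in this form. We must confirm that the weighted Dirac operator there is the homogeneous $\homg{D}$ with the weight entering as $w^{-1}\mathrm{div}\,w$, and that the accretivity assumption on $B$ there is only on $\ran(\homg{D})$, which is what $A$ satisfies. If the theorem instead uses the full $d$-dimensional gradient with $A$ acting on $T\mathcal{C}$, the identification is immediate and the rest is the routine functional-calculus argument above.
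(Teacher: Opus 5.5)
Your proposal is correct and follows essentially the paper's route. The paper gives no separate argument but defers to the standard derivation below \cite[Proposition 3.20]{ARR}: apply Theorem~\ref{thm:global_QE_cylinder} to $D^0B$ with $B=\mathrm{diag}(I,A)$ and to $B^*$, deduce a bounded holomorphic functional calculus, and test $\sqrt{(D^0B)^2}$ on $(f,0)$. One correction: Theorem~\ref{thm:global_QE_cylinder} as stated requires $B$ to be pointwise accretive on all of $\mathcal{V}$ (see \eqref{eq:kappa_in_accretivity}), not merely on $\mathsf{im}(D^0)$. You should therefore assume $A$ pointwise bounded and accretive, in which case $\mathrm{diag}(I,A)$ qualifies directly.
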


For the well known derivation, 
see for example the discussion below \cite[Proposition 3.20]{ARR}.
Note that the inhomogeneous Kato square root estimate
\begin{equation*}
  \lVert \sqrt{-w^{-1}\mathrm{div}w A \nabla} v \rVert_{L^2(\mathcal{C},w)}+\lVert v \rVert_{L^2(\mathcal{C},w)} \eqsim \lVert \nabla v \rVert_{L^2(\mathcal{C},w)}
  +\lVert v \rVert_{L^2(\mathcal{C},w)}
\end{equation*}
is an immediate consequence of the results 
in \cite{ARR}, since it only requires local 
quadratic estimates.
We remark that on flat space, with $\mathcal{C}$ replaced by $\R^d$, the homogeneous estimates follow directly from the inhomogeneous estimates, since $\R^d$ is scale invariant. To see this, set $v(x)=u(\lambda x)$ and let $\lambda \to\infty$, noting that the hypotheses on $A$ and $w$ are scale invariant.
The cylinder $\mathcal{C}$, however, is not 
scale invariant, and we cannot derive \cref{thm:global_QE_cylinder} from 
\cite[Theorem 3.3]{ARR}.

\subsubsection*{Structure of the paper}
In \cref{sec:prelims} we recall definitions of weights on space of homogeneous type and
present an extension for Muckenhoupt weights.
We also collect, in \cref{sec:unperturbed}, the weighted estimates of ``unperturbed'' operators on the cylinder $\mathcal{C}$,
that is, operators not involving the rough coefficients $\inhom{B}$ from \eqref{eq:QE_on_rescaled_cylinder}.

The weighted quadratic estimate \eqref{eq:QE_on_rescaled_cylinder} on the cylinder $\mathcal{C}$ (\cref{thm:global_QE_cylinder})
is proved in \cref{sec:globalQE}.
Applications to weighted quadratic estimates for locally cylindrical manifolds (\cref{thm:QE_zero_inj_radius})
are stated and proved in \cref{sec:localQE_M}.
This class (\cref{def:locally_cylindrical_manifolds}) includes manifolds with zero injectivity radius, as the one from \cite[Example 2.8]{BrR1} discussed above.

The paper contains two appendices: \cref{appx:poincare} with the proofs of Poincaré estimates on the cylinder,
and \cref{appx:singular_integrals} with proofs of singular integral estimates of ``unperturbed'' operators from \cref{sec:unperturbed}.

%% file: sections/prelims.tex
\subsection{Space and setup}\label{subsec:space_setup}
Let $N$ be a closed, connected smooth $n$-dimensional Riemannian manifold, %
with Riemannian metric $g_N$. 
In this article we will only consider connected manifolds.
The cylinder $\mathcal{C} \coloneqq \mathbb{R}^{k} \times N$ is a smooth, complete $d$-dimensional Riemannian manifold, with $d = k + n$.
We endow $\mathcal{C}$ with the product metric $\mathrm{d}(\cdot,\cdot)$ 
and the Riemannian measure $\mathrm{d}\mu \coloneqq \mathrm{d}x\mathrm{d}y$,
where $\mathrm{d}x$ is the standard Lebesgue measure on $\mathbb{R}^{k}$
and $\mathrm{d}y$ is the Riemannian measure on $N$.
The tangent bundle is $T\mathcal{C} \simeq \mathbb{R}^{k} \oplus T N$.
Given a vector field $u$ in $T\mathcal{C}$,
we will write $u$ as $\big[ u_{\mathbb{R}},  u_N \big]^\transpose$, when we want to distinguish between the horizontal and vertical components in $\mathbb{R}^k$ and $T N$.
We will refer to vector field $[u_{\mathbb{R}}, 0]^\transpose$ as being ``horizontal'', and vector fields $[0,u_N]^\transpose$ as ``vertical'', as we did in \cref{fig:cylinder}.  
We denote by $\Delta = \mathrm{div}\nabla$ the unweighted Laplace--Beltrami operator acting on scalar functions on $\mathcal{C}$.

\subsubsection{Space of Homogeneous Type and Dyadic system}
The measure $\mu$ on $\mathcal{C}$ is doubling, in the sense that there exists a finite constant $c_\mu \ge 1$ such that
\begin{equation*}
  \mu\big( B(z,2r) \big) \le c_\mu \, \mu \big( B(z,r) \big)
\end{equation*}
for any $z \in \mathcal{C}$ and any $r >0$, where the geodesic balls $B(z,r)$ are taken with respect to the metric $\mathrm{d}(\cdot,\cdot)$.
Then the cylinder $\mathcal{C}$, viewed as a measure metric space $(\mathcal{C}, \mathrm{d}, \mu)$, %
is a space of homogeneous type (SHT) in the sense of Coifman and Weiss~\cite{CoifWeiss}. 
Being a space of homogeneous type, by a result of Christ~\cite[Theorem 11]{zbMATH00010412}, refined in~\cite{HytKairema},
there exists a system of dyadic cubes $\mathcalboondox{D}$ on $\mathcal{C}$.
For $j \in \mathbb{Z}$, we denote by $\mathcalboondox{D}_{2^{j}}$ the family of dyadic cubes $Q$ of side length $\ell(Q) = 2^{j}$.
For $t>0$, we set $\mathcalboondox{D}_t$ to be $\mathcalboondox{D}_{2^{j}}$
where $j = \mathrm{ceiling}(\log_2t)$, so $2^{j-1} < t \le 2^{j}$.
  We shall however require only large dyadic cubes ($t \ge 1$),
  in which case we use cubes $Q = P \times N$,
  where $P$ is a standard dyadic cube in $\mathcalboondox{D}_{2^j}(\mathbb{R}^{k})$ for $j \in \mathbb{N} \cup \{0\}$.

  \subsubsection{Notations and structural constants}\label{subsec:constants}
  For two quantities $X, Y \ge 0$,
  the expression $X \lesssim Y$ means that there exists a finite, positive constant $C$ such that $X \le C Y$.
  The expression $X \gtrsim Y$ means that $Y \lesssim X$.
  When both expressions hold at the same time, with possibly different constants, we will write $X \eqsim Y$.  
  In particular, we shall omit the dependence on any structural constant, such as the dimensions of the cylinder and the doubling constants depending on the measure.
  
We will consider the weighted measure $\mathrm{d}w \coloneqq w \mathrm{d}\mu$, where $w$ is a scalar Muckenhoupt weight
in $A_2(\mathcal{C})$ with respect the underlying measure $\mathrm{d}\mu$ on $\mathcal{C}$.

\subsection{Muckenhoupt weights}\label{subsec:weights}
Let $(M,g)$ be a complete Riemannian manifold with measure $\mu$.
We include definitions and properties for general $p > 1$,
although only $p=2$ is used in the rest of the article.
Let $B = B(z,r)$ be a geodesic ball of radius $r>0$ centred at $z$.
If $\lvert B \rvert$ denotes the Riemannian measure of a ball $B$,
the average of a scalar function $w$ over $B$ is $\langle w \rangle_B \coloneqq \fint_B w \D{\mu} \coloneqq \lvert B \rvert^{-1} \int_B u \D{\mu}$.
\begin{define}[Muckenhoupt $A_p^R$ weights]\label{def:A_p^R}               %
  Let $R>0$ be fixed, and $p \in (1,\infty)$.
  A scalar weight $w \colon M \to [0,\infty]$ belongs to the Muckenhoupt class $A_p^R(M)$,
  with respect to the Riemannian measure $\mathrm{d}\mu$, if
  \begin{equation}\label{eq:A_p^R_condition}
    [w]_{A_p^R} \coloneqq \sup_{\substack{z_0 \in M \\ r < R}} \Big(\fint_{B(z_0,r)} w(z) \D{\mu}(z) \Big) \Big(\fint_{B(z_0,r)} w(z)^{-\frac1{p-1}} \D{\mu}(z) \Big)^{p-1} < \infty .
  \end{equation}
  We say that a weight $w \in A_p(M)$ if
  $[w]_{A_p} \coloneqq \sup_{R > 0} [w]_{A_p^R} $ is finite.
\end{define}

Geodesic balls can be replaced by cubes.
In the case of the cylinder $\mathcal{C}$
we take the supremum over all cubes $Q \subset \mathcal{C}$.
In particular, we require condition~\eqref{eq:A_p^R_condition} for large cubes $Q = P \times N$,
with $P \in \mathcalboondox{D}(\mathbb{R}^k)$, and $\ell(P) \ge 1$.

The weighted measure $w \mathrm{d}\mu$ is doubling when $w$ is a Muckenhoupt weight in $A_p(M)$, see~\cite[(9) in Proposition 7.1.5]{GrafakosClassical}.

\subsubsection{Periodic extensions}
We show that even, periodic extensions of $A_2$ weights are also in $A_2$.
The existence of an extension for Muckenhoupt weights defined on a domain in $\mathbb{R}^d$ has been studied by Holden~\cite{zbMATH00093899}.
Some attempt to generalise Holden's result to metric measure spaces has been done, see~\cite{Kurki-Mudarra22}.
Here we present a simpler extension which is enough for our setting, and that we have not found in the literature.
We start by defining the even, periodic extension of a function.
\begin{define}[Even, periodic extension]\label{def:periodic-extension}
  Let $Q_0 = (0,\ell_0)^d \subset \mathbb{R}^d$, and let $w_0 \colon Q_0 \to (0,\infty)$ be a weight.
  The even extension of $w_0$ on $(-\ell_0,\ell_0)^d$ is defined by reflecting $w_0$ along each coordinate axis:
  \begin{equation*}
    w_e(x_1,x_2,\ldots) \coloneqq w_0(\lvert x_1\rvert, \lvert x_2\rvert, \ldots).
  \end{equation*}
  Then we extend it periodically to the whole space:
  \begin{equation*}
    w(x + 2 \ell_0 k) \coloneqq w_e(x), \quad \forall k \in \mathbb{Z}^d .
  \end{equation*}
  We refer to $w$ as the \emph{even, periodic extension} of $w_0$.
\end{define}
In the following, we will work on cubes, 
but we note that the same construction applies to functions defined on a parallelepiped in $\mathbb{R}^d$,
by reflecting and extending periodically in each direction.

\begin{prop}\label{prop:periodic_even_ext}
  Let $Q_0 = (0,\ell_0)^d \subset \mathbb{R}^d$.
  If $w_0 \in A_2(Q_0)$, meaning that there exists $C_0 < \infty$ such that
  \begin{equation*}
    [w_0]_{A_2(Q_0)} \coloneqq \sup_{Q \subseteq Q_0} \fint_Q w_0 \fint_Q w_0^{-1} \le C_0
  \end{equation*}
  then its even, periodic extension $w$ is in $A_2(\mathbb{R}^d)$ and
  $[w]_{A_2(\mathbb{R}^d)} \le 2^{2d} [w_0]_{A_2(Q_0)}$.
\end{prop}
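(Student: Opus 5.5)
We split arbitrary cubes $Q\subset\mathbb{R}^d$ into small and large ones relative to $\ell_0$. Throughout, for a cube $Q$ with $\ell(Q)=\ell$ we write $\mathcal{R}(Q)=\{Q\cap R : R \text{ a reflected copy } (\pm x_i+2\ell_0 m_i)\text{ of } Q_0\}$. Then $w$ restricted to each reflected copy $R$ is the pullback of $w_0$ under an isometry $\phi_R\colon R\to Q_0$ that is a product of coordinate reflections and translations. Such maps send subcubes to subcubes, so $w|_R\in A_2(R)$ with the same constant $[w_0]_{A_2(Q_0)}$.

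\emph{Case 1: $\ell(Q)\le \ell_0$.} In each coordinate direction the interval of $Q$ meets at most two adjacent intervals $(j\ell_0,(j+1)\ell_0)$. The reflection across their common endpoint $j\ell_0$ maps one onto the other and preserves $w$ in that variable. Let $Q'$ be the cube of side $\ell(Q)$ that has a vertex at the reflection point (the common endpoint) in every direction where $Q$ straddles, and agrees with $Q$ elsewhere. Then $Q'$ lies in a single reflected copy $R$, and the reflections map each piece of $Q$ into $Q'$, up to the point reflection/folding. Hence
\[
\int_Q w\le 2^d\int_{Q'}w \quad\text{and}\quad \int_Q w^{-1}\le 2^d\int_{Q'}w^{-1},
\]
since folding $Q$ onto $Q'$ is at most $2^d$-to-one and preserves $w$. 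Therefore
\[
\fint_Q w\fint_Q w^{-1}\le 2^{2d}\fint_{Q'}w\fint_{Q'}w^{-1}\le 2^{2d}[w_0]_{A_2(Q_0)},
\]
using that $\phi_R(Q')$ is a subcube of $Q_0$. One must check that $Q'$ fits inside a single interval: in direction $i$, if $Q$ straddles $j\ell_0$, the interval $[j\ell_0, j\ell_0+\ell]$ or $[j\ell_0-\ell,j\ell_0]$ lies in one period interval because $\ell\le\ell_0$. The folding map $x_i\mapsto j\ell_0+|x_i-j\ell_0|$ sends the side of $Q$ into that interval. This direction-by-direction folding is the core step.

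\emph{Case 2: $\ell(Q)>\ell_0$.} Here we use periodicity. Let $\tilde Q\supset Q$ be the smallest union of period cells of side $2\ell_0$ (the cells $(-\ell_0,\ell_0)^d+2\ell_0 m$), aligned so that $\tilde Q$ is a cube of side $L\le \ell(Q)+4\ell_0\le 5\ell(Q)$. On $\tilde Q$, each of $w$ and $w^{-1}$ has average equal to its average over $Q_0$, by evenness and periodicity. Thus
\[
\fint_Q w\le (L/\ell)^d\fint_{\tilde Q}w=(L/\ell)^d\fint_{Q_0}w_0,
\]
and similarly for $w^{-1}$. This gives the bound $(L/\ell)^{2d}[w_0]$, whose constant $5^{2d}$ is not the claimed $2^{2d}$.

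The main obstacle is getting the sharp constant $2^{2d}$ in this large-cube case. To sharpen it, we instead argue direction by direction. In each direction, the interval $I$ of length $\ell>\ell_0$ decomposes under folding $x\mapsto$ (distance to $2\ell_0\mathbb{Z}$, then reflected into $(0,\ell_0)$). The pushforward of normalized Lebesgue measure on $I$ to $(0,\ell_0)$ has density at most $2\lceil \ell/(2\ell_0)\rceil \cdot\ell_0/\ell\cdot\ell_0^{-1}\le 2/\ell_0$ relative to normalized measure, i.e.\ at most $2$ times uniform. Taking products over the $d$ directions, $\fint_Q w\le 2^d\fint_{Q_0}w_0$, and likewise for $w^{-1}$. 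This yields the factor $2^{2d}$ times $\fint_{Q_0}w_0\fint_{Q_0}w_0^{-1}\le[w_0]$. The same pushforward-density argument also handles Case 1 uniformly: folding into a suitable subinterval of length $\ell$ gives density at most $2$. So we will present the proof as one fold-and-bound-density argument per coordinate.
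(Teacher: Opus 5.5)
Your proof is correct. In the small-cube case $\ell(Q)\le\ell_0$ it is essentially the paper's argument. The paper first translates and reflects so that the centre of $Q$ lies in $[0,\ell_0/2]^d$, uses evenness to get $\int_Q w\le 2^d\int_{Q\cap Q_0}w$, and then places $Q\cap Q_0$ inside a cube $\widetilde Q\subseteq Q_0$ of side $\ell$. Your fold onto $Q'$ is the same step, done without the normalisation.

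The large-cube case is where you differ. The paper handles $\ell>2\ell_0$ by noting $Q_0\subset Q$ and invoking periodicity, and handles $\ell_0<\ell\le 2\ell_0$ by enlarging $Q$ to $2Q$, with all constants hidden in $\eqsim$ and $\lesssim$. Followed literally, the doubling step alone costs an extra factor $2^d$ per average. So that route gives a larger dimensional constant and does not actually deliver the stated $2^{2d}$.

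Your one-dimensional counting argument does deliver it. Take the triangle wave $F(s)=\mathrm{dist}(s,2\ell_0\mathbb{Z})$. An interval of length $\ell$ lies in a half-open interval of length $2\ell_0\lceil \ell/(2\ell_0)\rceil$. That interval contains exactly $\lceil \ell/(2\ell_0)\rceil$ points of each coset $\pm s+2\ell_0\mathbb{Z}$, and $\lceil x\rceil\le 2x$ for $x=\ell/(2\ell_0)\ge 1/2$. Taking the product over coordinates gives $\fint_Q w\le 2^d\fint_{Q_0}w_0$, and the same for $w^{-1}$, hence the factor $2^{2d}$ for every cube.

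When you write it up:
\begin{itemize}
\item make this preimage count explicit, since it is the only step you leave implicit;
\item drop the preliminary $5^{2d}$ attempt;
\item note that the final inequality uses $Q_0$ itself as a competitor in the supremum defining $[w_0]_{A_2(Q_0)}$.
\end{itemize}
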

\begin{proof}[Proof of \cref{prop:periodic_even_ext}]
  Assume that
  \begin{equation*}
    [w_0]_{Q} \coloneqq \fint_Q w_0 \fint_Q w_0^{-1} \le C_0 \quad \forall Q \subseteq Q_0.
  \end{equation*}
  We want to show that there exists $C < \infty$ such that $[w]_{Q} \le C$ for all $Q \subset \mathbb{R}^d$.

  For cubes $Q \subseteq Q_0$ there is nothing to check,
  same for cubes fully contained in a reflected and/or translated copy of $Q_0$.
  Let $a = (a_i)_{i=1,\dots,d}$ be the centre of $Q$, and let $\ell$ be its sidelength. By translation and reflection, without loss of generality
  we can assume that $a_i \in [0,\ell_0/2]$ for all $i$.
  Moreover, we can assume that $\ell\le \ell_0$.
  Indeed, if $\ell>2\ell_0$ then $Q_0\subset Q$, so 
  $\fint_Q w\eqsim \fint_{Q_0} w$ and 
  $\fint_Q w^{-1}\eqsim \fint_{Q_0} w^{-1}$ by periodicity.
  If $\ell_0<\ell\le 2\ell_0$, then
  $\fint_Q w\lesssim \fint_{2Q} w$
  and $\fint_Q w^{-1}\lesssim \fint_{2Q} w^{-1}$.

  Assuming now $a_i \in [0,\ell_0/2]$ and $\ell\le \ell_0$,
  consider the rectangle $R = Q\cap Q_0$, with all sidelengths belonging to $[\ell/2,\ell]$.
  By evenness $\int_Q w\le 2^d \int_R w$.
  Let $\widetilde Q$ be a cube of sidelength $\ell$ such that $R\subseteq \widetilde Q\subseteq Q_0$.
  Then $\fint_Q w\eqsim \fint_R w\lesssim \fint_{\widetilde Q} w$.
  This together with the same estimate for $w^{-1}$ shows that
  \begin{equation*}
    \fint_{Q} w \fint_{Q} w^{-1}\lesssim \fint_{\widetilde Q} w \fint_{\widetilde Q} w^{-1} \le [w]_{A_2(Q_0)}.
  \end{equation*}
\end{proof}
\begin{remark}
  The same proof works for $A_p$-weights, for $p \in (1, \infty)$,
  giving the bound $[w]_{A_p(\mathbb{R}^d)} \le 2^{d p} [w_0]_{A_p(Q_0)}$.
\end{remark}

%% file: sections/unperturbed.tex
In this section we collect weighted estimates needed to prove the quadratic estimate~\eqref{eq:global_QE_cylinder}  %
on the cylinder $\mathcal{C}$.
We consider unperturbed operators, meaning that these estimates are unrelated to the multiplication operator $B$.

\subsubsection{Poincaré inequality}\label{sec:poincare_scalar}
  We will need a weighted Poincaré inequality %
  on $\mathcal{C}$,
  for large dyadic cubes and functions in the weighted Sobolev space
  \begin{equation*}
    H^1(\mathcal{C},w) \coloneqq \big\{ u \in W^{1,1}_{\mathrm{loc}}(\mathcal{C}), u \in L^2(\mathcal{C},w) \, \text{ with } \, \nabla u \in L^2(\mathcal{C};T\mathcal{C},w) \big\}.
  \end{equation*}
  
  \begin{prop}[Scalar weighted Poincaré]\label{lemma:poincare_scalar} 
    Let $Q \in \mathcalboondox{D}_t$ be a dyadic cube on the cylinder
    with $\ell(Q) \eqsim t \ge 1$. Let $w \in A_2(\mathcal{C})$.
    Then for all $u \in H^{1}(\mathcal{C},w)$
    \begin{equation}\label{eq:poincare_scalar}
      \int_{Q} \lvert u - \langle u \rangle_{Q}\rvert^2 \mathrm{d}w \lesssim \int_{Q} \lvert t \nabla u\rvert^2 \mathrm{d}w.
    \end{equation}
    The inequality~\eqref{eq:poincare_scalar}
    holds if the unweighted average $\langle u\rangle_{Q}$ is replaced by the weighted one.
    The implicit constant depends on $[w]_{A_2}$ and structural constants as in \cref{subsec:constants}.
  \end{prop}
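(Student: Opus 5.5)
The plan is to reduce the weighted Poincaré inequality on a large cube $Q = P\times N$, $\ell(P)=2^j\eqsim t\ge 1$, to known weighted Poincaré inequalities on pieces of bounded size, and then chain these pieces together along the $\mathbb{R}^k$ direction using the $A_2$ property and doubling of $w\,\mathrm{d}\mu$. The first step is a local estimate. For a unit-scale cylinder piece $Q' = P'\times N$ with $\ell(P')=1$, or a slight enlargement $\widetilde Q'$ of it, we aim to prove
\begin{equation*}
  \int_{Q'} \lvert u - \langle u\rangle^w_{Q'}\rvert^2 \mathrm{d}w \lesssim \int_{\widetilde Q'} \lvert \nabla u\rvert^2 \mathrm{d}w .
\end{equation*}
To get it, we cover $Q'$ by finitely many coordinate charts of $N$; their number depends only on $N$. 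In each chart the metric is bi-Lipschitz to the Euclidean one, and $w$ restricted to the chart is a local $A_2$ weight. We then either apply the periodic extension of \cref{prop:periodic_even_ext} and the Fabes--Kenig--Serapioni weighted Poincaré inequality on Euclidean cubes, or cite the weighted Poincaré inequality at bounded scales from \cite{AMR}/\cite{ARR}, which holds on $\mathcal{C}$ because $\mathcal{C}$ has bounded geometry. Since $N$ is connected, the chart estimates are glued by a standard chaining over overlapping charts, with the averages compared through the doubling property of $w$.

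Second, we pass to scale $t$. The slickest route is to prove the weighted inequality with the \emph{weighted} average and then note that the unweighted version follows. Indeed, for any constant $c$,
\begin{equation*}
  \int_Q \lvert u - \langle u\rangle_Q\rvert^2 \mathrm{d}w \lesssim \int_Q \lvert u-c\rvert^2 \mathrm{d}w + w(Q)\lvert\langle u-c\rangle_Q\rvert^2 ,
\end{equation*}
and by Cauchy--Schwarz together with the $A_2$ condition on $Q$,
\begin{equation*}
  \lvert\langle u-c\rangle_Q\rvert^2 \le \langle \lvert u-c\rvert^2 w\rangle_Q \langle w^{-1}\rangle_Q \le [w]_{A_2}\, w(Q)^{-1} \int_Q\lvert u-c\rvert^2 \mathrm{d}w ,
\end{equation*}
so the two versions are equivalent. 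The converse direction works the same way, since weighted averages minimise the weighted $L^2$ deviation.

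For the main step, the cleanest approach is to avoid chaining altogether and use a genuine $(1,1)$ or $(2,2)$ weighted Poincaré inequality on the doubling metric measure space $(\mathcal{C},\mathrm{d},w\mu)$. The unweighted $\mathcal{C}$ supports a $1$-Poincaré inequality at all scales, being a product of $\mathbb{R}^k$ with a compact manifold. At scales $r\ge1$ the pointwise bound $\lvert u(z)-\langle u\rangle_B\rvert\lesssim \int_{2B} \lvert\nabla u(z')\rvert\,\mathrm{d}(z,z')\,\mu(B(z,\mathrm{d}(z,z')))^{-1}\mathrm{d}\mu(z')$ holds; it comes from the segment-plus-$N$ representation or from the unweighted Poincaré inequality via a telescoping argument. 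Given this, the weighted $L^2$ bound reduces to boundedness of a fractional/Riesz-potential type operator $I_1$ on $L^2(w)$ for $w\in A_2$, localised to $2Q$ and truncated at scale $t$. That boundedness holds in spaces of homogeneous type, for example via the Pérez/Sawyer–Wheeden argument or dyadic domination by $\sum_{Q'\subset 2Q} \ell(Q')\langle f\rangle_{Q'}\1_{Q'}$, which gives the bound $t\,\lVert \nabla u\rVert_{L^2(2Q,w)}$. Finally, we replace $2Q$ by $Q$ using a Whitney-type/Boman chain argument on the convex base $P$, which is valid because the cubes $P\times N$ are John domains.

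The main obstacle is exactly this last part: the localisation from $2Q$ to $Q$, together with uniformity in $t$. Because the scale of $N$ stays fixed while $P$ grows, the space is not scale invariant, so $t$ cannot simply be rescaled to $1$. I would therefore first try to handle it by Boman chaining through unit cylinders $P'\times N\subset Q$ with overlapping neighbours. The total count is $t^k$ pieces, but the chaining only uses geometric decay along dyadic ancestors, and that makes the constants uniform.
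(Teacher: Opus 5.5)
Your route differs from the paper's and is sound in outline, but the localisation in your last paragraph does not work as written.

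\textbf{The paper's route.} It never works intrinsically on $\mathcal{C}$. It covers $N$ by finitely many charts $N_i$ and pulls each slab $[0,h]^k\times N_i$ back to a Euclidean box $[0,h]^k\times[0,1]^n$. There it reflects $u$ and $w$ evenly and periodically in the $n$ vertical variables to fill the cube $[0,h]^d$, with \cref{prop:periodic_even_ext} keeping $w\in A_2$. It then applies Fabes--Kenig--Serapioni on that cube. Finally it glues the slabs by comparing averages on the overlaps $[0,h]^k\times(N_i\cap N_j)$, using doubling of $w$ and connectedness of $N$. Because the Euclidean inequality is applied on the box itself, the paper gets the non-enlarged estimate on $Q$ directly. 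It never needs an intrinsic Poincaré inequality on $\mathcal{C}$ at large scales.

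\textbf{Your route.} You start from the unweighted Poincaré inequality on $\mathcal{C}$ at all scales, which is easy from the product structure: Poincaré on $N$ fibrewise, plus Poincaré in $\mathbb{R}^k$ for the fibre averages. You telescope it into $\lvert u-\langle u\rangle_Q\rvert\lesssim t\,M(\1_{\lambda Q}\lvert\nabla u\rvert)$ pointwise. Muckenhoupt's theorem on the space of homogeneous type then gives the estimate with $\lambda Q$ on the right. Hedberg's bound $\lesssim t\,Mf$ for the truncated potential is all you need here, not Sawyer--Wheeden. What this buys is an intrinsic proof: no charts, Jacobians or reflected weights, and $w$ enters only through $\lVert M\rVert_{L^2(w)}$ and the doubling of $w\,\mathrm{d}\mu$. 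Your chart-based first step becomes unnecessary. Your comparison of weighted and unweighted averages is exactly the paper's closing remark. This weak form would already suffice for \cref{lemma:half_principal_part}, where the cubes are enlarged anyway.

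\textbf{Removing the enlargement.} The general principle you cite is correct. A weak inequality on balls whose dilates lie in a John domain upgrades to the strong one for doubling measures, via Boman chains through Whitney balls of all sizes. Moreover, $P\times N$ is uniformly John for $\ell(P)\ge1$. Your concrete plan, however, fails.
\begin{itemize}
\item Chains of unit cylinders from a corner of $P$ to its centre have length $\eqsim t$, and Boman's containment condition $Q'\subset CQ'_i$ fails along them.
\item Switching to dyadic ancestors restores containment. But ancestors of cubes touching $\partial P$ still touch $\partial P$, so their dilates leave $Q$. You would then need the strong inequality at their own scale, which is circular.
\end{itemize}
The chains must instead run through a genuine Whitney decomposition of $Q$: pieces $W\times N$ with $\ell(W)\eqsim\mathrm{dist}(W,\partial P)$, and small balls near $\partial P\times N$. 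These are then summed using the $L^2(w)$ bound for $M$.

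A shorter fix, in the paper's own spirit, is to reflect $u$ and $w$ evenly across the faces of $P$ in the $x$ variables only. The reflected weight is still in $A_2(\mathcal{C})$, by the cylinder version of \cref{prop:periodic_even_ext} that the paper also uses in \cref{sec:localQE_M}. Apply your weak inequality to the reflected pair. The left-hand side is unchanged, and the right-hand side over $\lambda Q$ is $\lesssim\lambda^k$ times that of \eqref{eq:poincare_scalar}.
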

  A proof, by reduction to euclidean rectangles, is in \cref{subsec:poincare}.

  We also require a Poincaré type inequality
    \begin{equation}\label{eq:Poincare_vector}
      \int_{Q} \lvert v \rvert^2 \mathrm{d}w \lesssim \int_{Q} \lvert \overline{\nabla} v\rvert^2 \mathrm{d}w,
    \end{equation}
  without an average term, for vertical gradient vector fields $v= \nabla_y u\in H^1(Q; TN, w)$.
  A vector field $v$ on the cylinder $\mathcal{C}$ is a section of the tangent bundle $T\mathcal{C}$.
  Let $\overline{\nabla}$ be the covariant gradient of vector fields.
  We recall that in a local frame $e_j$ on a Riemannian manifold $M$,
  the covariant gradient $\overline{\nabla}$ on $M$ can be written as
  \begin{equation*}%
    \overline{\nabla} v= \sum_j (\nabla_{e_j} v)\otimes e_j^*,
  \end{equation*}
  where $\nabla_{e_i} v$  is the Levi-Civita covariant derivative and $e_j^*$ is the dual frame.
  If $\overline{\nabla} v = 0$, then we say that $v$ is a parallel vector field.
  If $\overline{\nabla}_x v = 0$, then $v$ is constant along $\mathbb{R}^{k}$,
  while if $\overline{\nabla}_y v = 0$ then $v$ is a parallel vector field on $N$. %
  
  In the unweighted case $w=1$, \eqref{eq:Poincare_vector} can be proved from 
  the scalar Poincaré inequality \cref{lemma:poincare_scalar}
  together with a compactness argument to eliminate the 
  average for gradient vector fields.
  This is possible since there are no non-zero parallel
  vertical gradient vector fields. 
  Indeed, $v= \nabla_y u$ and $\overline{\nabla} v=0$ forces $\text{div}_y v=\Delta_y u=0$,
  see \cite[Proposition 11.2.8]{RosenGMA},
  and therefore $v=0$.
  However, it is unclear to us how to control the constant
  in this compactness argument for general $w\in A_2(\mathcal{C})$.
  Therefore we shall instead of this vector Poincaré argument
  use the global singular integral estimate \cref{lemma:ygradest} below.

  \subsubsection{Singular integrals and Square functions estimates}
  We also need weighed estimates for square functions, Riesz, and Beurling transforms.
  These estimates are deduced from known estimates for the heat kernel on the product manifold $\mathbb{R}^k \times N$, for closed $N$.
  Indeed, these operators can be written in terms of derivatives of the heat propagator on the manifold,
  and they are shown to be Calderón--Zygmund operators on a space of homogeneous type,
  for which weighted estimates are known.
  
  \begin{prop}[Square functions estimates]\label{lemma:weighted_square_function_est_cylinder}
    Let $w \in A_2(\mathcal{C})$ and let $\Delta$ be the Laplace--Beltrami operator on the cylinder $\mathcal{C}$.
    Then 
    \begin{equation}\label{eq:weighted_square_function_est_cylinder}
      \int_0^\infty \lVert t (-\Delta)^{1/2} (I - t^2 \Delta)^{-1} u \rVert_{L^2(\mathcal{C},w)}^2 \frac{\mathrm{d}t}{t} \lesssim \lVert u \rVert_{L^2(\mathcal{C},w)}^2
    \end{equation}
    holds for all $u \in L^2(\mathcal{C},w)$.
    The implicit constant depends on $[w]_{A_2}$ and structural constants as in \cref{subsec:constants}.
  \end{prop}
  A proof, through heat kernel estimates and weighted estimates for vector-valued singular integrals on $\mathcal{C}$, is in \cref{subsec:square_function_estimates}.
  
  \begin{prop}[Riesz transform bounds]\label{lemma:Riesz_bounds}
    Let $w \in A_2(\mathcal{C})$, let $\Delta$ be the Laplace--Beltrami operator
    and let $\mathcal{R} \coloneqq \nabla (-\Delta)^{-1/2}$ be the Riesz transform on the cylinder $\mathcal{C}$.
    Then 
    \begin{equation}\label{eq:Riesz_bounds}
      \lVert \nabla (-\Delta)^{-1/2} u \rVert_{L^2(\mathcal{C},w)} \eqsim \lVert u \rVert_{L^2(\mathcal{C},w)}
    \end{equation}
    holds for all $u \in L^2(\mathcal{C},w)$,
    where the implicit constant depends on $[w]_{A_2}$ and structural constants as in \cref{subsec:constants}.
  \end{prop}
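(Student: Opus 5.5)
We would prove the two inequalities in \eqref{eq:Riesz_bounds} separately. The upper bound $\lVert \mathcal{R}u\rVert_{L^2(\mathcal{C},w)}\lesssim \lVert u\rVert_{L^2(\mathcal{C},w)}$ comes from Calderón--Zygmund theory on the space of homogeneous type $(\mathcal{C},\mathrm{d},\mu)$. The lower bound then follows by duality from the upper bound for the dual weight $w^{-1}$.

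\emph{Upper bound.} We represent the Riesz transform through the heat semigroup,
\begin{equation*}
  \mathcal{R}u=\nabla(-\Delta)^{-1/2}u=\frac{1}{\sqrt\pi}\int_0^\infty \nabla e^{t\Delta}u\,\frac{\mathrm{d}t}{\sqrt t},
\end{equation*}
so its kernel is $K(z,z')=\pi^{-1/2}\int_0^\infty \nabla_z p_t(z,z')\,t^{-1/2}\mathrm{d}t$.

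On the product $\mathbb{R}^k\times N$ the heat kernel factorises as $p_t=p^{\mathbb{R}^k}_t\otimes p^N_t$. For $p^N_t$ we have Gaussian bounds with gradient estimates for $t\le1$ (bounded geometry). For $t\ge1$ we have exponential convergence to $1/|N|$, with $\nabla_y p^N_t$ decaying like $e^{-\lambda_1 t}$. Combining these gives Gaussian upper bounds $|p_t(z,z')|\lesssim \mu(B(z,\sqrt t))^{-1}e^{-c\,\mathrm{d}(z,z')^2/t}$, together with the analogous bounds for $\sqrt t\,\nabla_z p_t$ and $\sqrt t\,\nabla_{z'}\nabla_z p_t$. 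Here $\mu(B(z,r))\eqsim r^{d}$ for $r\le1$ and $\eqsim r^k$ for $r\ge1$.

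Integrating in $t$ then yields the standard Calderón--Zygmund size and Hölder-smoothness estimates
\begin{equation*}
  |K(z,z')|\lesssim \mu(B(z,\mathrm{d}(z,z')))^{-1},
\end{equation*}
and similarly for the smoothness condition. One must interpret $K$ as taking values in $T\mathcal{C}$. To make sense of smoothness in $z$, we split into horizontal and vertical components and, for the vertical part, use local trivialisations of $TN$ at small distances.

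Unweighted $L^2$ boundedness is immediate from $\lVert\nabla f\rVert_2=\lVert(-\Delta)^{1/2}f\rVert_2$. Weighted $A_2$ bounds for Calderón--Zygmund operators on spaces of homogeneous type, via sparse domination or classical good-$\lambda$ arguments, then give the upper bound.

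\emph{Lower bound.} Let $u\in L^2(\mathcal{C},w)$ and pair it with $g\in L^2(\mathcal{C},w^{-1})$ in the unweighted duality. Writing $u$ as a limit of functions in the range of $(-\Delta)^{1/2}$, we have
\begin{equation*}
  \langle u,g\rangle=\langle \nabla(-\Delta)^{-1/2}u,\nabla(-\Delta)^{-1/2}g\rangle=\langle \mathcal{R}u,\mathcal{R}g\rangle,
\end{equation*}
because $\mathcal{R}^*\mathcal{R}=I$ on $\overline{\mathsf{im}(-\Delta)^{1/2}}$. Hence
\begin{equation*}
  |\langle u,g\rangle|\le\lVert\mathcal{R}u\rVert_{L^2(w)}\lVert\mathcal{R}g\rVert_{L^2(w^{-1})}\lesssim\lVert\mathcal{R}u\rVert_{L^2(w)}\lVert g\rVert_{L^2(w^{-1})},
\end{equation*}
using the upper bound for $w^{-1}\in A_2$.

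A subtlety is that $\mathcal{C}$ has no $L^2$ kernel of $\Delta$, since $\mu(\mathcal{C})=\infty$ and $k\ge1$. Thus $\mathcal{R}^*\mathcal{R}=I$ holds on a dense subclass of $L^2$, which lies in $L^2(w)\cap L^2$. We extend by density.

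\emph{Main obstacle.} We expect the hardest step to be the kernel regularity at large scales. The cylinder switches from $d$-dimensional to $k$-dimensional volume growth at scale $1$. We must check that the vertical gradient part, with its extra decay $e^{-\lambda_1 t}$, and the horizontal part both satisfy the Calderón--Zygmund estimates relative to $\mu(B(z,\mathrm{d}))$ uniformly across this transition.
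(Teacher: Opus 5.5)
Your proposal is correct and follows essentially the same route as the paper. Both arguments use the heat-semigroup representation of $\mathcal{R}$, product-structure Gaussian and derivative bounds for the heat kernel (short-time bounds on $N$, spectral decay of $\nabla p_N$ for large times), and integration in $t$ to get Calder\'on--Zygmund size and regularity estimates relative to $\mu(B(z,r))\eqsim\min(r^k,r^d)$. They then combine the unweighted isometry with the $A_2$ theorem on spaces of homogeneous type, and obtain the reverse inequality by duality via $\mathcal{R}^*\mathcal{R}=I$ and boundedness for $w^{-1}$. One small slip: the second-derivative bound should be stated for $t\,\nabla_{z'}\nabla_z p_t$, since each derivative costs $t^{-1/2}$, not for $\sqrt{t}\,\nabla_{z'}\nabla_z p_t$; it is the former that gives the regularity estimate $r^{-1}\mu(B(z,r))^{-1}$ after integrating against $t^{-1/2}\,\mathrm{d}t$.
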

  A proof, through heat kernel estimates and weighted estimates for singular integrals on $\mathcal{C}$, is in \cref{subsec:Riesz_transform_bounds}.

  \begin{prop}[Beurling transform bounds]\label{lemma:Beurling_bounds}
    Let $w \in A_2(\mathcal{C})$. Then for all scalar functions $u \in L^2(\mathcal{C},w)$ we have
    \begin{equation}\label{eq:Beurling_bounds}
      \lVert \overline{\nabla} \nabla (-\Delta)^{-1} u \rVert_{L^2(\mathcal{C},w)} \lesssim \lVert u \rVert_{L^2(\mathcal{C},w)}
    \end{equation}
    where $\overline{\nabla}$ is the covariant gradient of vector fields on the cylinder $\mathcal{C}$.
    The implicit constant depends on $[w]_{A_2}$ and structural constants as in \cref{subsec:constants}.
  \end{prop}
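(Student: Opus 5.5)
We will show that $T \coloneqq \overline{\nabla}\nabla(-\Delta)^{-1}$ is a Calderón--Zygmund operator on the space of homogeneous type $(\mathcal{C},\mathrm{d},\mu)$. The weighted bound \eqref{eq:Beurling_bounds} then follows from the weighted theory of Calderón--Zygmund operators on spaces of homogeneous type for $w\in A_2(\mathcal{C})$, as for \cref{lemma:Riesz_bounds}. Two ingredients are needed: unweighted $L^2$ boundedness, and size and Hölder-type smoothness estimates on the kernel away from the diagonal.

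\emph{Unweighted $L^2$ bound.} For $u$ in a dense class, with $f=(-\Delta)^{-1}u$, we use the Bochner formula on $\mathcal{C}$. The Ricci curvature of $\mathcal{C}$ is that of $N$, hence bounded. Integrating by parts gives
\[
\lVert \overline{\nabla}\nabla f\rVert_{L^2}^2=\lVert \Delta f\rVert_{L^2}^2-\int \mathrm{Ric}(\nabla f,\nabla f)\,\mathrm{d}\mu .
\]
The Ricci term is the dangerous one, since it is bounded only by $C\lVert\nabla f\rVert^2=C\langle -\Delta f,f\rangle$, which is not controlled by $\lVert u\rVert^2$ at low frequencies. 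We handle it by splitting $u$ with the spectral decomposition of $-\Delta_y$ on $N$.
\begin{itemize}
\item On the zero mode (functions independent of $y$), the operator reduces to the Euclidean second Riesz transforms $\partial_{x_i}\partial_{x_j}(-\Delta_x)^{-1}$ on $\mathbb{R}^k$, which are $L^2$ bounded. Here $\mathrm{Ric}(\nabla f,\nabla f)=0$, since $\nabla f$ is horizontal.
\item On the orthogonal complement, $-\Delta\ge\lambda_1>0$, so $\lVert\nabla f\rVert^2\le\lambda_1^{-1}\lVert u\rVert^2$ and the Ricci term is harmless.
\end{itemize}
Mixed terms do not arise, because the Bochner identity is applied to each piece separately and $T$ commutes with projection onto the $y$-mean.

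\emph{Kernel estimates.} Write
\[
(-\Delta)^{-1}=\int_0^\infty e^{s\Delta}\,\mathrm{d}s,
\qquad
K(z,z')=\int_0^\infty \overline{\nabla}_z\nabla_z\, p_s(z,z')\,\mathrm{d}s,
\]
where $p_s=p^{\mathbb{R}^k}_s\otimes p^N_s$ is the product heat kernel. We need second-derivative Gaussian bounds
\[
|\overline{\nabla}\nabla p_s(z,z')|\lesssim s^{-1}\mu(B(z,\sqrt s))^{-1}e^{-c\,\mathrm{d}(z,z')^2/s},
\]
together with one more derivative for the smoothness estimate. For $s\le1$ these follow from local geometry: $N$ is compact and smooth. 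For $s\ge1$ we use the product structure. The Euclidean factor has explicit derivative bounds. For $p^N_s$, we use $p^N_s-|N|^{-1}=O(e^{-\lambda_1 s})$ with all derivatives, so vertical derivatives decay exponentially at large times. Integrating in $s$ yields
\[
|K(z,z')|\lesssim\mu(B(z,\mathrm{d}(z,z')))^{-1},
\]
and the analogous Hölder smoothness estimate, using $\mu(B(z,r))\eqsim r^d$ for $r\le1$ and $\eqsim r^k$ for $r\ge1$. We also verify that $T$ agrees with the integral operator for disjointly supported functions.

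\emph{Main obstacle.} The delicate step is the large-time analysis. First, the $s$-integral converges at infinity only because we differentiate twice: $k$ may equal $1$ or $2$, so $(-\Delta)^{-1}$ itself need not have a convergent kernel. One therefore has to place the derivatives before integrating. Second, for $s\ge1$ the estimates must be uniform with the correct $r^k$ volume growth, combining the Euclidean decay $s^{-k/2-1}$ for horizontal derivatives with the exponential decay for vertical ones. Once these are in place, the weighted Calderón--Zygmund theorem on $\mathcal{C}$ concludes, with constants depending on $[w]_{A_2}$ and the structural constants.
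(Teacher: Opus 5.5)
Your proposal is correct and follows essentially the same route as the paper. You get an unweighted $L^2$ bound from the Bochner identity with the Ricci term controlled by the spectral gap of $N$, then Calder\'on--Zygmund size and smoothness bounds for $\int_0^\infty \overline{\nabla}\nabla p_s\,\mathrm{d}s$ from the product heat kernel estimates (with exponential large-time decay of vertical derivatives), and finally the weighted Calder\'on--Zygmund theorem on the space of homogeneous type. The only difference is cosmetic: you split $u$ into its $y$-mean (handled by Euclidean second Riesz transforms) and the remainder (where $\lVert\nabla f\rVert^2\le\lambda_1^{-1}\lVert u\rVert^2$), whereas the paper applies Bochner to all of $v=\nabla(-\Delta)^{-1}u$, notes that $\mathrm{Ric}$ annihilates the horizontal part, and bounds $\lVert\nabla_y(-\Delta)^{-1}u\rVert\lesssim\lambda_2^{-1/2}\lVert u\rVert$ via Fourier transform in $\mathbb{R}^k$ and eigenfunction expansion on $N$.
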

  A proof, through heat kernel estimates and weighted estimates for singular integrals on $\mathcal{C}$, is in \cref{subsec:weighted_Riesz_Beurling}.
  Also the proof of the unweighted estimate, that is for $w=1$, uses a non-trivial application of the Bochner identity. This uses (the unweighted case of) the following estimate. 

  \begin{prop}[vector Poincaré]\label{lemma:ygradest}
    Let $w \in A_2(\mathcal{C})$. Then for all scalar functions $u \in L^2(\mathcal{C},w)$ we have
    \begin{equation*}%
      \lVert \nabla_y (-\Delta)^{-1} u \rVert_{L^2(\mathcal{C},w)} \lesssim \lVert u \rVert_{L^2(\mathcal{C},w)}.
    \end{equation*}
    The implicit constant depends on $[w]_{A_2}$ and structural constants as in \cref{subsec:constants}.
  \end{prop}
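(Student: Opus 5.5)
We aim to show that the operator $T \coloneqq \nabla_y (-\Delta)^{-1}$, acting from scalar functions to vertical vector fields, is a Calderón--Zygmund operator on the space of homogeneous type $(\mathcal{C},\mathrm{d},\mu)$. Once this is done, the weighted bound follows from the known $A_2$ theory for such operators on spaces of homogeneous type, exactly as for \cref{lemma:Riesz_bounds} and \cref{lemma:Beurling_bounds}. The reason to expect a zero-order operator is that $\nabla_y$ annihilates functions of $x$ alone. So $T$ only sees the part of $u$ orthogonal to constants on $N$, and on that part $-\Delta \ge -\Delta_y \ge \lambda_1(N)>0$. Hence $T$ behaves like $\nabla(-\Delta)^{-1/2}\cdot(-\Delta)^{-1/2}$ restricted to a spectral region bounded away from zero.

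\textbf{Kernel representation.} Write
\begin{equation*}
(-\Delta)^{-1} = \int_0^\infty e^{s\Delta}\,\mathrm{d}s ,
\end{equation*}
and use the product structure $e^{s\Delta} = e^{s\Delta_x}\otimes e^{s\Delta_y}$. Since $\nabla_y e^{s\Delta_y}$ kills constants on $N$, we may replace $e^{s\Delta_y}$ by $e^{s\Delta_y}-P_0$, where $P_0$ is the projection onto constants. The kernel of $T$ is then
\begin{equation*}
K(z,z') = \int_0^\infty p^{\mathbb{R}^k}_s(x-x')\, \nabla_y\bigl(p^N_s(y,y') - |N|^{-1}\bigr)\,\mathrm{d}s .
\end{equation*}
The factor $\nabla_y p^N_s$ satisfies the following bounds. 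For $s\le 1$ it obeys Gaussian gradient bounds $s^{-(n+1)/2}e^{-c|y-y'|^2/s}$, by standard heat kernel estimates on closed manifolds. For $s\ge 1$ it decays like $e^{-\lambda_1 s}$, by the spectral gap. The same bounds, with one more power of $s^{-1/2}$, hold for the derivative in $z'$.

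\textbf{Size and smoothness estimates.}
\begin{itemize}
\item For $\mathrm{d}(z,z')\lesssim 1$, the short-time part of the integral gives
\begin{equation*}
|K|\lesssim \mathrm{d}^{-(d-1)},
\end{equation*}
which is better than the required $\mu(B(z,\mathrm{d}))^{-1}\eqsim \mathrm{d}^{-d}$. It is locally integrable, since $T$ has order $-1$.
\item For $\mathrm{d}(z,z')\gg 1$, the factor $e^{-\lambda_1 s}$ combined with $e^{-c|x-x'|^2/s}$ gives exponential decay $e^{-c|x-x'|}$. This is much better than the required bound $\mathrm{d}^{-k}$ for large balls in the cylinder, where $\mu(B(z,r))\eqsim r^k$.
\item The Hölder regularity in $z'$ follows similarly from the differentiated kernel.
\end{itemize}
In fact $|K|$ is dominated by an integrable, radially decreasing function of the distance. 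So one could even bound $T$ pointwise by a constant times the Hardy--Littlewood maximal function on $(\mathcal{C},\mu)$. That maximal function is bounded on $L^2(w)$ for $w\in A_2(\mathcal{C})$ by the doubling property. This route avoids the Calderón--Zygmund machinery entirely, and I would prefer it.

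\textbf{Main obstacle.} The key step is to verify $\int_{\mathcal{C}} \sup_{\mathrm{d}(z,z')\ge r}|K|\,\mathrm{d}\mu<\infty$ uniformly, i.e. to dominate $|K(z,z')|$ by $\Phi(\mathrm{d}(z,z'))$ with $\int_0^\infty \Phi(r)\,\mathrm{d}\mu(B(z,r))<\infty$.
\begin{itemize}
\item Near the diagonal, the bound $\mathrm{d}^{-(d-1)}$ is integrable against $r^{d-1}\,\mathrm{d}r$.
\item At infinity, $e^{-cr}$ is integrable against $r^{k-1}\,\mathrm{d}r$.
\end{itemize}
The delicate point is a uniform estimate on $\nabla_y(p^N_s-|N|^{-1})$ bridging the regimes $s\le 1$ and $s\ge1$. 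I would obtain it from the semigroup property: write $\nabla_y e^{s\Delta_y}(I-P_0) = \nabla_y e^{\Delta_y/2}\, e^{(s-1/2)\Delta_y}(I-P_0)$. The gap gives $L^2\to L^2$ decay $e^{-\lambda_1 s}$, and then ultracontractivity on the compact manifold $N$ upgrades this to $L^\infty$ kernel bounds. Once $\Phi$ is established, the maximal function bound yields
\begin{equation*}
\lVert Tu\rVert_{L^2(\mathcal{C},w)}\lesssim \lVert Mu\rVert_{L^2(\mathcal{C},w)}\lesssim \lVert u\rVert_{L^2(\mathcal{C},w)},
\end{equation*}
with constant depending only on $[w]_{A_2}$ and structural constants.
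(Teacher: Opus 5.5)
Your proposal is correct, but it takes a genuinely different and more elementary route than the paper.

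\textbf{The paper's route.} The paper proves that $\nabla_y(-\Delta)^{-1}$ is a Calder\'on--Zygmund operator on $(\mathcal{C},\mathrm{d},\mu)$ and then applies the $A_2$ theorem for such operators on spaces of homogeneous type (via sparse domination). This requires three ingredients. First, a separate unweighted $L^2$ bound, obtained by Fourier transform in $x$ and eigenfunction expansion on $N$: $\lVert(-\Delta_N)^{1/2}(-\Delta_{\mathbb{R}^k}-\Delta_N)^{-1}\rVert\lesssim\lambda_2^{-1/2}$. Second, a kernel size estimate. Third, a kernel regularity estimate. The kernel estimates come from $2\int_0^\infty t^2\nabla_y e^{t^2\Delta}\,\mathrm{d}t/t$ and the heat kernel bounds, splitting $t\lessgtr1$ and $r\lessgtr1$, with the derivative on $p_N$ giving the large-time decay.

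\textbf{Your route.} You use the same heat kernel input: short-time Gaussian gradient bounds for $p_N$, and spectral-gap decay of $\nabla_y p_N$ for large times. The difference is that you notice the operator is not actually singular. Near the diagonal the kernel is $O(r^{1-d})$, one order better than the CZ size $r^{-d}$; this uses $d\ge2$, which holds since $k<d$. At infinity it is $O(e^{-cr})$. So it is dominated by a radially decreasing $\Phi$ with $\sup_z\int\Phi(\mathrm{d}(z,z'))\,\mathrm{d}\mu(z')<\infty$. The layer-cake argument then gives $|Tu|\lesssim Mu$ pointwise, and Muckenhoupt's theorem for the ball maximal function on the doubling space finishes the proof.

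\textbf{What each route buys.} Since your bounds are on $\int_0^\infty|p^{\mathbb{R}^k}_s|\,|\nabla_y p^N_s|\,\mathrm{d}s$, Tonelli makes $Tu(z)=\int K(z,z')u(z')\,\mathrm{d}\mu(z')$ valid for every $z$, not only off the support of $u$. As a result your argument needs no regularity estimate, no CZ or sparse machinery, and no separate unweighted $L^2$ bound: both the weighted and unweighted bounds follow from the pointwise domination. The paper's route instead handles this operator in the same way as the Riesz and Beurling transforms, which are genuinely singular and do need CZ theory. It also needs the unweighted case of this estimate anyway inside the Bochner-identity proof of the unweighted Beurling bound, which is where the Fourier computation is done.
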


  A proof is in \cref{subsec:weighted_Riesz_Beurling}.
  Note that it is not possible to control the full gradient $\nabla (-\Delta)^{-1} u$
  in this way, only the vertical gradient $\nabla_y (-\Delta)^{-1} u$
  due to the compactness of the base manifold $N$.

%% file: sections/globalQE.tex
In this section we state and prove our main theorem:
a quadratic estimate for perturbed operators $\homg{D}B$ on cylinders $\mathcal{C} = \mathbb{R}^k \times N$ introduced in \cref{subsec:space_setup}.

\subsection{Main theorem}\label{subsec:preamble_main_thm}
Let $\mathcal{V}$ be the vector bundle over $\mathcal{C} = \mathbb{R}^k \times N$ given by
\begin{equation}\label{eq:vector_bundle_V}
  \mathcal{V} \coloneqq \mathbb{C} \oplus T \mathcal{C}  \simeq \mathbb{C} \oplus \mathbb{R}^{k} \oplus T N .
\end{equation}
Let $w$ be a Muckenhoupt weight in $A_2(\mathcal{C})$.
We will write sections $u$ of $\mathcal{V}$ as $\big[ u_0, u_{\mathcal{C}} \big]^\transpose =$ $\big[ u_0, u_{\mathbb{R}},  u_N \big]^\transpose$
where $u_0 \in L^2(\mathbb{C},w)$, $ u_{\mathbb{R}} \in L^2(\mathbb{R}^k,wI_k)$, and $ u_N \in L^2(T N, w I_n)$.
Consider the homogeneous differential operator acting on $L^2(w)$-sections of $\mathcal{V}$ given by
\begin{equation*}
  \homg{D} \coloneqq
  \begin{bmatrix}
    0 & - \frac{1}{w}\mathrm{div}_{x,y} w \\
    \nabla_{x,y} & 0
  \end{bmatrix}=
  \begin{bmatrix}
    0 & -\frac{1}{w}\mathrm{div}_{x} w & -\frac{1}{w}\mathrm{div}_{y} w \\
    \nabla_{x} & 0 & 0 \\
    \nabla_{y} & 0 & 0
  \end{bmatrix}.
\end{equation*}
Here $\nabla_y$ denotes the covariant gradient on $N$.
  The operators $\nabla$ and $\mathrm{div}$ are defined in the distributional sense,
  as in~\cite[\S 2.2]{AMR},
  so that $\homg{D}$ is a self-adjoint operator in $L^2(\mathcal{V},w)$.
  Let $B \in L^\infty(\mathsf{End}(\mathcal{V}))$ be a bounded multiplication operator from $L^2(\mathcal{V},w)$ to itself.
  We assume that $B$ is pointwise accretive, %
  meaning that
  \begin{equation}\label{eq:kappa_in_accretivity}
    \kappa_B \coloneqq \inf_{\substack{x \in \mathcal{C} \\ v \in \mathcal{V} \setminus \{0\}}} \frac{ \Real \big(B(x) v , v \big)}{\lvert v \rvert^2} > 0.
  \end{equation} 

  \begin{theorem}\label{thm:global_QE_cylinder}
    Let $N$ be a closed, smooth $n$-dimensional Riemannian manifold. %
    Let $\mathcal{C} = \mathbb{R}^k \times N$ be the cylinder over $N$, and 
    let $w \in A_2(\mathcal{C})$ be a Muckenhoupt weight. Let $\homg{D}$ and $B$ be the operators defined above.
    Then the quadratic estimate
    \begin{equation}\label{eq:global_QE_cylinder}
      \int_0^{\infty} \left\lVert t \homg{D} B (I + (t \homg{D} B)^2)^{-1} u \right\rVert^2_{L^2(\mathcal{C},w)} \frac{\mathrm{d}t}{t} \lesssim \lVert u \rVert_{L^2(\mathcal{C},w)}^2
    \end{equation}
    holds for all sections $u$ of the bundle $\mathcal{V}$ in $L^2(\mathcal{C},w)$.
    The implicit constant depends on $[w]_{A_2}, \kappa_B, \lVert B \rVert_{L^\infty}$, and structural constants as in \cref{subsec:constants}.
  \end{theorem}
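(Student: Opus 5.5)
We follow the Axelsson--Keith--McIntosh framework from \cite{AKMc}, adapted to weights as in \cite{AMR}. First we reduce the estimate to the range of $\homg{D}$. Write $Q_t^B \coloneqq t\homg{D}B(I+(t\homg{D}B)^2)^{-1}$. Since $L^2(\mathcal{C},w) = \nul(\homg{D}B)\oplus\clos{\ran(\homg{D})}$ and $Q_t^B$ vanishes on the kernel, it suffices to prove the estimate for $u\in\clos{\ran(\homg{D})}$. Combining this with the analogous estimate for $B^*\homg{D}$ and duality then gives the full quadratic estimate.

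Next we split off the principal part. We set $P_t \coloneqq (I+t^2(\homg{D})^2)^{-1}$ and write
\begin{equation*}
  Q_t^B u = (Q_t^B - \gamma_t\mathcal{A}_t P_t)u + \gamma_t\mathcal{A}_t P_t u .
\end{equation*}
Here $\gamma_t(z) \coloneqq Q_t^B[\,\cdot\,](z)$ is the principal part applied to suitable constant sections, and $\mathcal{A}_t$ is a dyadic averaging operator.
\begin{itemize}
  \item For $t\le 1$ we can use the standard cubes, and this regime follows from \cite[Theorem 1.1]{AMR} applied on $\mathcal{C}$, since $\mathcal{C}$ has bounded geometry.
  \item For $t\ge 1$ the cubes are $Q=P\times N$. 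Averages of vertical fields $u_N$ over such cubes are not meaningful without a global frame on $TN$, so we only average the scalar component $u_0$ and the horizontal component $u_{\mathbb{R}}$.
\end{itemize}
For the vertical part we use that $u\in\ran(\homg{D})$, so $u_N=\nabla_y v$ with $v = (-\Delta)^{-1}\mathrm{div}$-type potentials. By \cref{lemma:ygradest}, this vertical part is controlled in $L^2(w)$ by $\lVert (-\Delta)^{1/2}\cdot\rVert$-type quantities. Consequently $\int_1^\infty\lVert Q_t^B(0,0,u_N)\rVert^2\,dt/t$ should be bounded by a square function of $u$, and \cref{lemma:weighted_square_function_est_cylinder} then closes this term. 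The idea is that at large scales vertical gradients are of lower order, because $N$ is compact.

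For the error term $Q_t^B-\gamma_t\mathcal{A}_tP_t$ we proceed in three steps.
\begin{itemize}
  \item Split it as $Q_t^B(I-P_t)$ plus $(Q_t^B-\gamma_t\mathcal{A}_t)P_t$.
  \item Bound the first piece by off-diagonal (Gaffney) estimates in $L^2(w)$, which hold because $w\,d\mu$ is doubling, together with \cref{lemma:weighted_square_function_est_cylinder}.
  \item Bound the second piece using the scalar Poincaré inequality \cref{lemma:poincare_scalar} on large cubes, together with \cref{lemma:Riesz_bounds} and \cref{lemma:Beurling_bounds}. These control $\lVert t\overline{\nabla}P_tu\rVert$ by the square function, giving $\int\lVert (Q_t^B-\gamma_t\mathcal{A}_t)P_tu\rVert^2\,dt/t\lesssim\lVert u\rVert^2$.
\end{itemize}

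The remaining and hardest step is the Carleson measure estimate: we need $\lvert\gamma_t(z)\rvert^2\,dw\,dt/t$ to be a Carleson measure on $\mathcal{C}\times(1,\infty)$. We plan a $T(b)$ argument with test functions
\begin{equation*}
  f_Q^\nu = \bigl(I+(\epsilon\ell(Q)\homg{D}B)^2\bigr)^{-1}(\eta_Q\nu)
\end{equation*}
for unit vectors $\nu$ in the $(\mathbb{C}\oplus\mathbb{R}^k)$-components, with cutoffs $\eta_Q$ adapted to $P\times N$. The usual sectorial decomposition, where $\gamma_t$ is localised in a cone of directions around $\nu$, would require vertical directions. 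We therefore use the refined sectorial decomposition of \cite{Andreas_localTb}, which only needs to test against horizontal and scalar directions. The contributions from vertical directions are absorbed through the principal-part reduction above. We expect this step, and in particular checking that the stopping-time argument works with weighted averages over $P\times N$, to be the main obstacle.
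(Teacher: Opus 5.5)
\textbf{There is a genuine gap: your choice of smoothing operator $P_t$ breaks the principal-part approximation.} Your architecture otherwise matches the paper's: reduction to $\clos{\ran(\homg{D})}$ and to $t\ge 1$ via \cite{AMR}, cubes $P\times N$, no averaging of $u_N$, \cref{lemma:ygradest} for the vertical part, and the refined sectorial decomposition of \cite{Andreas_localTb} with test functions $P^B_{\sigma\ell}(\1_Q v)$.

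With $P_t=(I+t^2(\homg{D})^2)^{-1}$, the vector part acts on $u_{\mathcal{C}}=\nabla g$ as $(P_tu)_{\mathcal{C}}=\nabla(I-t^2\Delta_w)^{-1}g$, which involves the \emph{weighted} Laplacian $\Delta_w=\frac1w\mathrm{div}\,w\nabla$. The Poincaré step then needs $\int_1^\infty\lVert t\overline{\nabla}(P_tu)_{\mathbb{R}}\rVert^2_{L^2(\mathcal{C},w)}\,\mathrm{d}t/t\lesssim\lVert u\rVert^2$. That is, you need second derivatives of a degenerate resolvent in $L^2(w)$, and these are not even finite for general $A_2$ weights. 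For example, take $w=|x_1|^{\alpha}$ with $0<|\alpha|<1$ and $v=(I-t^2\Delta_w)^{-1}g$. Then $\partial_{x_1}^2v=(v-g)/t^2-(\alpha/x_1)\partial_{x_1}v$, and the flux $w\,\partial_{x_1}v$ is generically nonzero at $x_1=0$, so $\partial_{x_1}^2v\notin L^2_{\mathrm{loc}}(w)$.

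The bounds you cite, \cref{lemma:Riesz_bounds} and \cref{lemma:Beurling_bounds}, concern the unweighted $\Delta$. That operator does not intertwine with $\Delta_w$, so these bounds cannot supply the estimate. The paper's fix is the hybrid mollifier of \cref{def:P_t}. It uses $(I-t^2\Delta_w)^{-1}$ on the scalar part, but the \emph{unweighted} Hodge--Laplacian resolvent $(I-t^2\nabla\mathrm{div})^{-1}$ on gradient fields. Then $(I-t^2\Delta_H)^{-1}\mathcal{R}=\mathcal{R}(I-t^2\Delta)^{-1}$, and everything reduces to Calder\'on--Zygmund operators for $\Delta$. This is why the paper stresses that $P_t$ is not $P_t^B$ with $B=I$.

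\textbf{The vertical term is affected in the same way, and its mechanism is misplaced in your sketch.} No decay in $t$ comes from $u_N$ itself. The only bound your ingredients give for $Q_t^B(0,0,u_N)$ is $\lesssim\lVert u_N\rVert$, which is not integrable against $\mathrm{d}t/t$ on $(1,\infty)$. The decay comes from the smoothing. Concretely, \cref{lemma:half_principal_part} applied to $P_tu$ leaves the term $\lVert (P_tu)_N\rVert^2$, without a factor $t\nabla$, because $u_N$ is not averaged. With the paper's $P_t$ and $u_{\mathcal{C}}=\mathcal{R}h$ one has $(P_tu)_N=\nabla_y(-\Delta)^{-1}[(-\Delta)^{1/2}(I-t^2\Delta)^{-1}h]$. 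So \cref{lemma:ygradest} gives $\lVert (P_tu)_N\rVert\lesssim t^{-1}\lVert t(-\Delta)^{1/2}(I-t^2\Delta)^{-1}h\rVert$, and \cref{lemma:weighted_square_function_est_cylinder} closes the estimate for $t\ge1$. With your $P_t$ the bracket would instead be $-\Delta(I-t^2\Delta_w)^{-1}g$, which is uncontrolled.

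Two smaller points:
\begin{itemize}
\item The horizontal component must be averaged against $\mathrm{d}\mu$, not $\mathrm{d}w$ (\cref{def:E_t}). The term $\gamma_tE_t(P_t-I)$ is handled by the trace lemma applied to $\fint_Q\nabla_x s(I-s^2\Delta)^{-1}h\,\mathrm{d}\mu$, which integrates by parts in $x$. This mixed averaging is also what forces the auxiliary weight $\mathrm{diag}(1,w^{-1}I_k)$ in the stopping time.
\item $Q_t^B(I-P_t)$ is handled by factoring $I-P_t$ through $t\homg{D}$. Off-diagonal estimates enter only in \cref{lemma:half_principal_part}.
\end{itemize}
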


  The rest of this section is devoted to the proof of \cref{thm:global_QE_cylinder}.
  We build on the proof of quadratic estimates in Euclidean space in~\cite[Theorem 3.3]{ARR}.
  We highlight the novelties that allow us to extend this result to cylinders.

  First, we recall resolvent estimates for $\homg{D} B$ and topological splitting of the space.
  Given an angle $\theta<\pi/2$, we define the closed bisector $\Sigma_\theta$ in the complex plane by
  \begin{equation*}
    \Sigma_\theta \coloneqq \{ z \in \mathbb{C} \colon \lvert \arg(\pm z)\rvert \le \theta \} \cup \{ 0\}.
  \end{equation*}

  \begin{prop}[{\cite[Proposition 3.1]{ARR}}]\label{prop:typeomega}
    Under the above assumptions on $\homg{D}$ and $B$, the following facts hold.  
    \begin{enumerate}[label= (\roman*)]
    \item The operator $\homg{D} B$, with domain $B^{-1}\dom(\homg{D})$, is $\theta$-bisectorial.
      There is an angle $\theta<\pi/2$ such that
      the spectrum $\sigma(\homg{D} B)$ is contained in a bisector
      $\Sigma_\theta$ of angle $\theta$
      and resolvent estimates $\lVert (\lambda - \homg{D} B)^{-1} \rVert \lesssim \mathrm{dist}(\lambda, \Sigma_\mu)^{-1}$ hold for any $\mu \in (\theta, \frac{\pi}2)$ when $\lambda \notin \Sigma_\mu$.
    \item The operator $\homg{D} B$ has range $\ran(\homg{D}B)=\ran(\homg{D})$ and null space $\nul(\homg{D}B)=B^{-1}\nul(\homg{D})$ such that topologically (but not necessarily orthogonally) one has      
      \begin{equation*}
        L^2(\mathcal{V},w) = \clos{\ran(\homg{D}B)} \oplus \nul(\homg{D}B).
      \end{equation*}
    \item The restriction of $\homg{D} B$ to $\clos{\ran(\homg{D})}$
      is a closed, injective operator with dense range in
      $\clos{\ran(\homg{D})}$.  Moreover, the same statements on spectrum and resolvents as in (i) hold.
    \item Statements similar to (i) and (ii) hold for $B \homg{D}$ with $\dom(B \homg{D})=\dom(\homg{D})$,
      since $B\homg{D}$ is the adjoint of $\homg{D} B^*$, or alternatively since $B \homg{D}= B(\homg{D} B)B^{-1}$ on $\ran(B \homg{D})\cap \dom(\homg{D})$,
      where $\ran(B \homg{D}) = B\ran(\homg{D})$ and $B \homg{D}=0$ on the null space $\nul(B \homg{D})\coloneqq \nul(\homg{D})$.
    \end{enumerate}
  \end{prop}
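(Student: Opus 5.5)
The plan is to reproduce the abstract argument of \cite[Proposition 3.1]{ARR}. It uses only three facts. First, $\homg{D}$ is self-adjoint in the weighted Hilbert space $L^2(\mathcal{V},w)$ with inner product $(u,v)_w=\int_{\mathcal{C}}(u,v)\,w\,\mathrm{d}\mu$; this holds because $-\frac1w\mathrm{div}\, w$ is the $L^2(w)$-adjoint of $\nabla$. Second, $B$ is bounded. Third, $B$ is accretive in this inner product: integrating the pointwise bound \eqref{eq:kappa_in_accretivity} against $w$ gives $\Real(Bu,u)_w\ge\kappa_B\lVert u\rVert_w^2$. In particular $B$ is invertible on $L^2(\mathcal{V},w)$ with $\lVert B^{-1}\rVert\le\kappa_B^{-1}$. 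The numerical range $\{(Bu,u)_w\}$ then lies in the closed sector $S_\omega=\{|\arg z|\le\omega\}$, where $\omega=\arccos(\kappa_B/\lVert B\rVert_{L^\infty})<\pi/2$. The Muckenhoupt property of $w$ plays no role here.

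For (i), $\homg{D}B$ is closed on $B^{-1}\dom(\homg{D})$, because $\homg{D}$ is closed and $B$ is a bounded isomorphism. Fix $\lambda\notin\Sigma_\mu$ with $\mu>\theta:=\omega$, take $u\in\dom(\homg{D}B)$, and set $f=(\lambda-\homg{D}B)u$. Pairing with $Bu$ gives
\[
(f,Bu)_w=\lambda(u,Bu)_w-(\homg{D}Bu,Bu)_w .
\]
The last term is real by self-adjointness, and $(u,Bu)_w=\lvert(u,Bu)_w\rvert e^{i\phi}$ with $|\phi|\le\omega$. Dividing by $e^{i\phi}$ writes the right-hand side as $\lvert(u,Bu)_w\rvert\,\lambda$ minus a number lying on a line through the origin at angle at most $\omega$ from the real axis. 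Elementary plane geometry then gives
\[
\lvert(f,Bu)_w\rvert\gtrsim \mathrm{dist}(\lambda,\Sigma_\omega)\,\kappa_B\lVert u\rVert_w^2 .
\]
Hence $\lVert B\rVert\,\lVert f\rVert_w\gtrsim\mathrm{dist}(\lambda,\Sigma_\mu)\lVert u\rVert_w$, so $\lambda-\homg{D}B$ is injective with closed range.

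The main obstacle is surjectivity. I would handle it by applying the same estimate to the adjoint $(\lambda-\homg{D}B)^*=\bar\lambda-B^*\homg{D}$. For $B^*\homg{D}$ one pairs with $v$ instead of $Bv$: $(\homg{D}v,v)_w$ is real and $B^*$ is accretive with the same angle, so $\bar\lambda-B^*\homg{D}$ is injective. Injectivity of the adjoint means the closed range of $\lambda-\homg{D}B$ is dense, hence everything. This yields the spectral inclusion $\sigma(\homg{D}B)\subset\Sigma_\theta$ and the resolvent bounds $\lVert(\lambda-\homg{D}B)^{-1}\rVert\lesssim\mathrm{dist}(\lambda,\Sigma_\mu)^{-1}$. (Alternatively, one can argue that the resolvent set is open and closed in $\mathbb{C}\setminus\Sigma_\mu$, using the a priori bound.)

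For (ii), the identities $\ran(\homg{D}B)=\ran(\homg{D})$ and $\nul(\homg{D}B)=B^{-1}\nul(\homg{D})$ are immediate since $B$ maps $B^{-1}\dom(\homg{D})$ bijectively onto $\dom(\homg{D})$. For the splitting, write $\mathcal{R}=\clos{\ran(\homg{D})}$ and let $P$ be the $w$-orthogonal projection onto $\mathcal{R}$, so that $\mathcal{R}^\perp=\nul(\homg{D})$ by self-adjointness.
\begin{itemize}
\item \emph{Trivial intersection.} If $u\in\mathcal{R}$ and $Bu\in\nul(\homg{D})=\mathcal{R}^\perp$, then $(Bu,u)_w=0$, so $u=0$ by accretivity.
\item \emph{Spanning.} The compression $PBP$ is accretive on $\mathcal{R}$, hence invertible there by Lax--Milgram. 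Given $f$, choose $u\in\mathcal{R}$ with $PBu=PBf$. Then $P B(f-u)=0$, so $B(f-u)\in\mathcal{R}^\perp=\nul(\homg{D})$, i.e. $f-u\in\nul(\homg{D}B)$.
\end{itemize}
The resulting projections are bounded, with norms controlled by $\lVert B\rVert$ and $\kappa_B^{-1}$, which gives the topological direct sum.

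For (iii), $\mathcal{R}$ is invariant under $\homg{D}B$ and under its resolvents. The restriction is injective because $\mathcal{R}\cap\nul(\homg{D}B)=\{0\}$. Its range is $\ran(\homg{D})$, which is dense in $\mathcal{R}$. The resolvent bounds pass to the restriction directly. For (iv), the same arguments apply verbatim to $B\homg{D}$, either as the adjoint of $\homg{D}B^*$ (note that $B^*$ satisfies the same hypotheses), or through the similarity $B\homg{D}=B(\homg{D}B)B^{-1}$. All constants depend only on $\kappa_B$ and $\lVert B\rVert_{L^\infty}$.
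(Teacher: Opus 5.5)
Your argument is correct. It is essentially the standard abstract proof that the paper does not reproduce but takes from \cite[Proposition 3.3]{AAMc2010}: self-adjoint $\homg{D}$ and accretive $B$ give the sector a priori estimate, surjectivity follows from injectivity of the adjoint, and the splitting comes from accretivity on $\clos{\ran(\homg{D})}$. Two small fixes are needed. To show $\bar\lambda-B^*\homg{D}$ is injective, pair with $\homg{D}v$ rather than $v$: then $(v,\homg{D}v)_w$ is real and $(B^*\homg{D}v,\homg{D}v)_w$ lies in the sector, which forces $\homg{D}v=0$ and hence $v=0$ (alternatively, use $B^*\homg{D}=B^*(\homg{D}B^*)(B^*)^{-1}$ together with your estimate applied to $B^*$). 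Also, your parenthetical open--closed alternative would still need one point of the resolvent set in each component of $\mathbb{C}\setminus\Sigma_\mu$ to get started.
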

  For the proof we refer to~\cite[Proposition 3.3]{AAMc2010}.
  The resolvent estimates in \cref{prop:typeomega} (i)
  imply that the resolvents $R_t^B \coloneqq (I + i t \homg{D} B)^{-1}$ are uniformly bounded.
  The same holds for the operators
  \begin{equation*}
    P_t^B \coloneqq (I + (t \homg{D} B)^2)^{-1} 
    \quad \text{and} \quad Q_t^B \coloneqq t \homg{D} B \, P_t^B
  \end{equation*}
  since they can be written as linear combinations of $R_t^B$ and  $R_{-t}^B$,
  see~\cite[\S 3.1]{KatoElEscorial2010}.
  Then~\eqref{eq:global_QE_cylinder} is a quadratic estimate for $Q_t^B$,
  which we will also denote by $\psi(t D^0 B)$, where $\psi(\zeta) = \zeta(1+\zeta^2)^{-1}$.
  
  \subsubsection*{Structure and novelties in the proof}
  The standard approach consists in approximating the operator $Q_t^B$ by a paraproduct $\gamma_t E_t$ (principal part),
  where $E_t$ is an averaging operator at scale $t$ (see \cref{def:E_t} below)
  and $\gamma_t$ is the action of $Q_t^B$ on constant functions and vector fields.
  A first novelty here is to consider an averaging operator
  which avoids averaging the vertical component in $T \mathcal{C}$.
  Indeed, there is no natural way of defining this average,
  and we will show that in fact there is no need to average this component.

  The approximation of the principal part $Q_t^B - \gamma_t E_t$ is controlled (\cref{lemma:half_principal_part})
  using off-diagonal estimates and weighted Poincaré inequality on the cylinder.
  This term is then bounded in $L^2(\mathcal{C},w)$ upon introducing a smoothing operator $P_t$ (see \cref{def:P_t}) %
  whose gradient enjoys square function estimates (\cref{lemma:square_function_P_t}).
  The boundedness of the paraproduct $\gamma_t E_t$ is reduced to a Carleson measure estimate via a $T b$ argument,
  where suitable test functions $b$ are constructed using the operator $\homg{D} B$.
  This is proved by adapting the main result of~\cite{Andreas_localTb} to our setting.

  The following is the complement result of~\cite[Lemma 2.4]{AMR}.
  \begin{lemma}\label{lemma:reduction_as_in_AMR}
    Let $\psi(z) = z (1+z^2)^{-1}$, and $\psi_t(z) = \psi(t z)$.
    Let $T$ and $T_0$ be bisectorial operators with same domain on a Hilbert space $\mathcal{H}$
    and such that
    \begin{equation*}
      \int_1^\infty \lVert \psi_t(T_0) u \rVert^2 \frac{\mathrm{d}t}t \lesssim \lVert u \rVert^2 \quad \text{ and } \quad  \int_0^1 \lVert \psi_t(T) u \rVert^2 \frac{\mathrm{d}t}t \lesssim \lVert u \rVert^2
    \end{equation*}
    hold for all $u \in \mathcal{H}$.
    If $T - T_0$ is a bounded linear operator on $\mathcal{H}$, then the quadratic estimate
    \begin{equation*}
      \int_0^\infty \lVert \psi_t(T_0) u \rVert^2 \frac{\mathrm{d}t}t \lesssim \lVert u \rVert^2
    \end{equation*}
    holds for all $u \in \mathcal{H}$.
  \end{lemma}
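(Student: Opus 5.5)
The plan is to split the integral at $t=1$. The large-scale part $\int_1^\infty$ is controlled by hypothesis, so we only need
\begin{equation*}
  \int_0^1 \lVert \psi_t(T_0) u \rVert^2 \frac{\mathrm{d}t}{t} \lesssim \lVert u \rVert^2 .
\end{equation*}
This we deduce from the known small-scale estimate for $T$ by comparing $\psi_t(T_0)$ with $\psi_t(T)$ through a resolvent perturbation argument. This mirrors \cite[Lemma 2.4]{AMR}, with the roles of large and small scales exchanged.

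First, write $\psi_t(z) = \tfrac{1}{2i}\big((1-itz)^{-1} - (1+itz)^{-1}\big)$. Then $\psi_t(T)-\psi_t(T_0)$ is a combination of differences of resolvents $R^{T}_{\pm t} - R^{T_0}_{\pm t}$, where $R^T_s = (I+isT)^{-1}$. Since $T$ and $T_0$ have the same domain, the second resolvent identity gives
\begin{equation*}
  (I+isT_0)^{-1} - (I+isT)^{-1} = (I+isT_0)^{-1}\, is(T-T_0)\,(I+isT)^{-1}.
\end{equation*}
Bisectoriality gives uniform bounds on both resolvents, and $T-T_0$ is bounded. Hence
\begin{equation*}
  \lVert \psi_t(T) - \psi_t(T_0) \rVert \lesssim t \lVert T-T_0 \rVert .
\end{equation*}

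Second, for $0<t<1$ we get $\lVert \psi_t(T_0)u\rVert^2 \lesssim \lVert \psi_t(T)u\rVert^2 + t^2\lVert u\rVert^2$. Integrating against $\mathrm{d}t/t$ over $(0,1)$, the first term is bounded by the assumed small-scale estimate for $T$. The second term contributes $\int_0^1 t\,\mathrm{d}t\,\lVert u\rVert^2 \lesssim \lVert u\rVert^2$. Combined with the hypothesis on $(1,\infty)$, this yields the full quadratic estimate for $T_0$.

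The only point needing care is the domain bookkeeping in the resolvent identity: $(I+isT)^{-1}$ maps into $\dom(T)=\dom(T_0)$, so $(I+isT_0)$ may be applied to it, and the identity holds on all of $\mathcal{H}$. The implicit constant depends only on the bisectoriality constants of $T$ and $T_0$ and on $\lVert T-T_0\rVert$. I expect no genuine obstacle. The key observation is that the perturbation cost $t\lVert T-T_0\rVert$ is small precisely for small $t$, which is why the roles of the two scale ranges can be swapped compared to \cite{AMR}.
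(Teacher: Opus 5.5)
Your proposal is correct and is essentially the paper's proof: split off the large scales, bound the resolvent difference by $\lVert R_t(T_0)-R_t(T)\rVert \lesssim t\lVert T-T_0\rVert$, and absorb the error term $\int_0^1 t\,\mathrm{d}t\,\lVert u\rVert^2$ using the triangle inequality. Your version, via the second resolvent identity, also makes explicit the uniform resolvent bounds from bisectoriality and the domain bookkeeping, which the paper's stated inequality (written without an implicit constant) leaves implicit.
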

  \begin{proof}
    Let $R_t(T_0) \coloneqq (I + i t T_0)^{-1}$ be the resolvent of $T_0$ on scale $t$.
    As in~\cite[Lemma 2.4]{AMR}, it is enough to check that the difference of the resolvents is bounded as
    \begin{equation*}
      \lVert R_t (T_0) - R_t(T) \rVert_{\mathcal{H} \to \mathcal{H}} \le \lvert t\rvert \lVert T_0 - T \rVert_{\mathcal{H} \to \mathcal{H}}.
    \end{equation*}
    This implies the bound
    \begin{align*}
      \Big(\int_0^1 \lVert \psi_t(T_0) u \rVert^2 \frac{\mathrm{d}t}t\Big)^{1/2}
                & \le \Big(\int_0^1 \lVert \big(\psi_t(T_0) - \psi_t(T)\big) u \rVert^2  \frac{\mathrm{d}t}t \Big)^{1/2} + \Big(\int_0^1 \lVert \psi_t(T) u \rVert^2 \frac{\mathrm{d}t}t \Big)^{1/2} \\
               & \lesssim \Big(\int_0^1 \lVert u \rVert^2 t \mathrm{d}t \Big)^{1/2} + \lVert u \rVert.
    \end{align*}   
  \end{proof}

  \begin{remark}
    In view of \cref{lemma:reduction_as_in_AMR},
    it is enough to prove quadratic estimates~\eqref{eq:global_QE_cylinder} only for large $t \ge 1$.
    This because we can apply \cref{lemma:reduction_as_in_AMR} with $T = D B$ and $T_0 = \homg{D} B$,
    where $\inhom{D}$ is the inhomogeneous operator considered in~\cite{AMR} and also in~\eqref{eq:inhom_D}.
    Quadratic estimates for $T$ on small scales ($t\le 1$) follow from~\cite[Theorem 1.1]{AMR},
    while the difference $\inhom{D} - \homg{D}$ is a bounded operator on the Hilbert space $\mathcal{H} \coloneqq L^2(\mathcal{C},w)$.
  \end{remark}

  We introduce the operator $P_t$ and prove square function estimates.
\begin{define}[Mollifier $P_t$]\label{def:P_t}
  We define the unperturbed mollifier
  \begin{equation}\label{eq:P_t}
    P_t \coloneqq
    \begin{bmatrix}
      (I - t^2 \Delta_w)^{-1} &  \\
                      & (I - t^2\Delta_H)^{-1}
    \end{bmatrix}
  \end{equation}
  where $\Delta_w \coloneqq (1/w)\mathrm{div}_{x,y} w \nabla_{x,y}$ is the scalar weighted Laplacian on the cylinder $\mathcal{C}$,
  $I_d$ is the $d\times d$ identity matrix,
  and $\Delta_H \coloneqq \nabla \mathrm{div}$ is the unweighted Hodge-Laplacian
  acting on gradient vector fields on $\mathcal{C}$.
  Note that $P_t$ is \emph{not} the operator $P_t^B$ defined above when $B$ is the identity: they coincide only on scalar functions.
\end{define}
\begin{remark}
  We note that $\Delta_H$ will always act on  gradient vector fields, and
  \begin{equation}\label{eq:action_Hodge-Laplacian}
    \Delta_H(\nabla u) = \nabla \mathrm{div} (\nabla u) = \nabla ( \Delta u )
  \end{equation}
  for any scalar function $u$,
  where $\Delta = \mathrm{div}\nabla$ denotes the Laplace--Beltrami operator
  acting on scalar functions.
\end{remark}

 The operator $P_t$ satisfies the following square function estimates.
 \begin{lemma}\label{lemma:square_function_P_t}
   Let $w \in A_2(\mathcal{C})$.
   Then for all $u \in \overline{\mathsf{im}(\homg{D})}$ we have
   \begin{equation*}
     \int_0^\infty \left\lVert t
       \begin{psmallmatrix}
         \nabla & 0 \\
         0 & \overline{\nabla}
       \end{psmallmatrix} P_t
       \begin{bsmallmatrix}
         u_0 \\ u_{\mathcal{C}}
       \end{bsmallmatrix} \right\rVert_{L^2(\mathcal{C},w)}^2 \frac{\mathrm{d}t}t \lesssim
     \left\lVert \begin{bsmallmatrix}
       u_0 \\ u_{\mathcal{C}}
     \end{bsmallmatrix} \right\rVert^2_{L^2(\mathcal{C},w)}.
 \end{equation*}
\end{lemma}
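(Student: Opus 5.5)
The estimate decouples into the scalar component $u_0$ and the vector component $u_{\mathcal{C}}$, since $P_t$ is block diagonal. We treat the two blocks separately.

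\emph{Scalar block.} Here we must bound $\int_0^\infty \lVert t\nabla (I-t^2\Delta_w)^{-1}u_0\rVert^2_{L^2(w)}\,\mathrm{d}t/t$. Note that $\Delta_w$ is the unperturbed case $B=I$ of the weighted operator, since on scalars $P_t$ coincides with $P_t^I$. The block $[\,t\nabla P_t u_0\,]$ is the vector part of $Q_t^I[u_0,0]^\transpose$ with $D^0$ self-adjoint in $L^2(\mathcal{V},w)$. For self-adjoint $D^0$, the spectral theorem gives
\begin{equation*}
  \int_0^\infty\lVert \psi(tD^0)u\rVert^2\frac{\mathrm{d}t}{t}=c\lVert P_{\overline{\ran}}u\rVert^2 \le c\lVert u\rVert^2.
\end{equation*}
Concretely, integrating by parts in $L^2(w)$ gives
\begin{equation*}
  \lVert t\nabla(I-t^2\Delta_w)^{-1}u_0\rVert_{L^2(w)}^2 = \langle -t^2\Delta_w(I-t^2\Delta_w)^{-1}v,v\rangle_{L^2(w)}, \quad v=(I-t^2\Delta_w)^{-1}u_0,
\end{equation*}
and functional calculus for the nonnegative self-adjoint operator $-\Delta_w$ on $L^2(w)$ yields the bound $\int_0^\infty \frac{t^2\lambda}{(1+t^2\lambda)^2}\frac{\mathrm{d}t}{t}=\tfrac12$. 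So the scalar block is straightforward, with no $A_2$ input beyond self-adjointness.

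\emph{Vector block.} For $u\in\overline{\ran(D^0)}$ the vector component $u_{\mathcal{C}}$ lies in the $L^2(w)$-closure of gradients. By density I would take $u_{\mathcal{C}}=\nabla f$, and use \cref{lemma:Riesz_bounds} to write $u_{\mathcal{C}}=\nabla(-\Delta)^{-1/2}g$ with $\lVert g\rVert_{L^2(w)}\eqsim\lVert u_{\mathcal{C}}\rVert_{L^2(w)}$, where $g=(-\Delta)^{1/2}f$. Since $\Delta_H\nabla=\nabla\Delta$ by \eqref{eq:action_Hodge-Laplacian}, the unweighted resolvent commutes through:
\begin{equation*}
  (I-t^2\Delta_H)^{-1}\nabla f=\nabla(I-t^2\Delta)^{-1}f.
\end{equation*}
Hence
\begin{equation*}
  t\overline{\nabla}P_t u_{\mathcal{C}}=\overline{\nabla}\nabla(-\Delta)^{-1}\,\bigl[t(-\Delta)^{1/2}(I-t^2\Delta)^{-1}g\bigr],
\end{equation*}
because all the functions of $\Delta$ commute. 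The Beurling transform bound \cref{lemma:Beurling_bounds} then controls this pointwise in $t$ in $L^2(w)$ by $\lVert t(-\Delta)^{1/2}(I-t^2\Delta)^{-1}g\rVert_{L^2(w)}$. Integrating in $\mathrm{d}t/t$ and applying \cref{lemma:weighted_square_function_est_cylinder} gives a bound by $\lVert g\rVert^2_{L^2(w)}\lesssim\lVert u_{\mathcal{C}}\rVert^2_{L^2(w)}$.

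The main obstacle is justifying the representation and the commutations rigorously in the weighted setting. One must show that the $L^2(w)$-closure of $\{\nabla f\}$ coincides with the range of the weighted-bounded Riesz transform $\mathcal{R}$; this uses the lower bound in \eqref{eq:Riesz_bounds} and the fact that $\mathcal{R}$ has closed range. One must also check that the unweighted resolvents $(I-t^2\Delta)^{-1}$, a priori defined on $L^2(\mathcal{C})$, extend consistently to $L^2(w)$. I would handle this by working on the dense class $L^2(w)\cap L^2$ (or smooth compactly supported $g$), where all identities hold classically, and then extending by the uniform weighted bounds. Using the Riesz lower bound avoids needing a vector Poincaré inequality. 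Finally, the estimate is applied only to the components of $u$, so the full claim follows by adding the two blocks.
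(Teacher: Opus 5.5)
Your proposal is correct and follows the paper's proof: the scalar block by the spectral theorem for $-\Delta_w$ in $L^2(\mathcal{C},w)$, and the vector block by writing $u_{\mathcal{C}}=\nabla(-\Delta)^{-1/2}g$, commuting the resolvent through via $\Delta_H\nabla=\nabla\Delta$, and then applying the Beurling bound, the weighted square function estimate and the reverse Riesz inequality. One harmless slip: the integration-by-parts identity should read $\lVert t\nabla v\rVert^2_{L^2(w)}=\langle -t^2\Delta_w v,v\rangle_{L^2(w)}$, without the extra resolvent factor, which is what your multiplier $t^2\lambda(1+t^2\lambda)^{-2}$ actually corresponds to.
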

\begin{proof}
  This amounts to showing the two square function estimates
  \begin{align}
    \int_0^\infty \lVert t \nabla (I - t^2\Delta_w)^{-1} f \rVert_{L^2(\mathcal{C},w)}^2 \frac{\mathrm{d}t}t & \lesssim \lVert f \rVert_{L^2(\mathcal{C},w)}^2 , \label{eq:scalar_part_square_function} \\
    \int_0^\infty \lVert t \overline{\nabla} (I - t^2\Delta_H)^{-1} \nabla g \rVert_{L^2(\mathcal{C},w)}^2 \frac{\mathrm{d}t}t & \lesssim \left\lVert \nabla g \right\rVert_{L^2(\mathcal{C},w)}^2 , \label{eq:vector_part_square_function}
  \end{align}
  where $u = [f , \nabla g]^\transpose$, and $\nabla g = [\nabla_x g, \nabla_y g]^\transpose$.
  We start with~\eqref{eq:scalar_part_square_function}, which involves a scalar function $f$.
  Recall that $-\Delta_w = \mathrm{div}_w \nabla$, where $\mathrm{div}_w$ is the adjoint of $\nabla$ with respect to the inner product on $L^2(\mathcal{C},w)$. 
  Using the identity $\|Ax\|= \|\sqrt{A^*A}x\|$, we have
  \begin{align*}
    \lVert t\nabla (I - t^2 \Delta_w)^{-1} f \rVert_{L^2(\mathcal{C},w)}^2 & = \lVert t (-\Delta_w)^{1/2} (I - t^2 \Delta_w)^{-1} f \rVert_{L^2(\mathcal{C},w)}^2 \\
                                          & = \lVert \psi(t\sqrt{-\Delta_w}) f \rVert_{L^2(\mathcal{C},w)}^2
  \end{align*}
  where $\psi(z) = z(1+z^2)^{-1}$ is holomorphic. %
  Since $-\Delta_w$ is self-adjoint, so is $(-\Delta_w)^{1/2}$ and the operator $\psi(t \sqrt{-\Delta_w})$.
  By the spectral theorem for $-\Delta_w$ %
  it holds that
  \begin{equation*}
    \int_0^\infty \lVert t\nabla (I - t^2 \Delta_w)^{-1} f \rVert_{L^2(\mathcal{C},w)}^2 \frac{\D t}{t} = \int_0^\infty \lvert \psi(t) \rvert^2\frac{\D t}{t} \lVert f \rVert_{L^2(\mathcal{C},w)}^2 \lesssim \lVert f \rVert_{L^2(\mathcal{C},w)}^2
  \end{equation*}
  since $\psi$ vanishes at $0$ and decays at infinity.
  For the square function in~\eqref{eq:vector_part_square_function},
  we use that $\nabla g$ can be written as $\mathcal{R} (-\Delta)^{1/2} g$, where $\mathcal{R} \coloneqq \nabla (-\Delta)^{-1/2}$ is the Riesz transform.
  Let $h\coloneqq (-\Delta)^{1/2} g$.
  By the action of the Hodge-Laplacian $\Delta_H$ on gradient vector fields as in~\eqref{eq:action_Hodge-Laplacian},
  we have
  \begin{equation}\label{eq:interaction_P_t_and_Riesz}
    (I - t^2\Delta_H)^{-1} \mathcal{R} h = \mathcal{R} (I - t^2 \Delta)^{-1} h.
  \end{equation}
  After applying the conormal gradient $\overline{\nabla}$, the bound for the Beurling transform in \cref{lemma:Beurling_bounds} gives
  \begin{equation*}
    \lVert \overline{\nabla} (I - t^2\Delta_H)^{-1} \mathcal{R} h \rVert_{L^2(\mathcal{C},w)} \lesssim \lVert \Delta (-\Delta)^{-1/2} (I - t^2\Delta)^{-1} h \rVert_{L^2(\mathcal{C},w)}.
  \end{equation*}
  We conclude via the weighted square function estimate~\eqref{eq:weighted_square_function_est_cylinder}:
  \begin{equation}\label{eq:vector_part_square_function_bound}
    \int_0^\infty \lVert t(-\Delta)^{1/2}(I - t^2\Delta)^{-1} h\rVert_{L^2(\mathcal{C},w)}^2 \frac{\D t}{t} \lesssim \lVert h \rVert_{L^2(\mathcal{C},w)}^2 \eqsim \lVert \nabla g\rVert_{L^2(\mathcal{C},w)}^2 ,
  \end{equation}
  where the last bound is the reverse inequality for the Riesz transform in \eqref{eq:Riesz_bounds}.
\end{proof}
 
A key novelty in the proof of \cref{thm:global_QE_cylinder} is to consider an averaging operator
which avoids averaging the vertical component in $T \mathcal{C}$.
\begin{define}[Averaging operator]\label{def:E_t}
  Given a dyadic cube $R \in \mathcalboondox{D}_t(\mathbb{R}^k)$,
  we define an averaging operator at scale $t\ge 1$, %
  \begin{equation}\label{eq:averaging_operator_E_t}
    E_t
    \begin{bmatrix}
      u_0 \\ u_{\mathbb{R}} \\ u_N
    \end{bmatrix} = \sum_{\substack{Q = R \times N \\ R \in \mathcalboondox{D}_t}}
    \left[\frac{1}{w(Q)} \int_{Q} u_0 \,\mathrm{d}w \,,
    \frac{1}{\mu(Q)} \int_{Q} u_{\mathbb{R}} \,\mathrm{d}\mu \,,
    0 \;
    \right]^\transpose \1_Q .
  \end{equation}
\end{define}
Differently with respect to previous averaging operators $E_t$ in the literature,
we do not average the vertical component $u_N$,
since there is no natural definition of this.
The operator $E_t$ is bounded on $L^2(\mathcal{C},w)$:
the only non trivial bound to check is the unweighted average in weighted norms,
which componentwise is bounded as
\begin{equation*}
  \lVert E_t u_j \rVert_{L^2(\mathcal{C},w)}^2 = \sum_{\substack{Q = R \times N \\ R \in \mathcalboondox{D}_t}} w(Q) \Big(\fint_Q u_j \D{\mu}\Big)^2 \le [w]_{A_2} \sum_{\substack{Q = R \times N \\ R \in \mathcalboondox{D}_t}} \int_Q \lvert u_j\rvert^2 w \D{\mu} \lesssim \lVert u_j \rVert_{L^2(\mathcal{C},w)}^2 ,
\end{equation*}
where the first inequality is H\"older's inequality for $p=2$:
\begin{equation*}%
  \fint_{Q} f \D{\mu} \le [w]_{A_p}^{1/p} \Big( \frac{1}{w(Q)} \int_{Q} \lvert f \rvert^p w \D{\mu} \Big)^{1/p}
\end{equation*}
that holds for any $f \in L^1(Q,\mu)$ and any $p > 1$.

\begin{define}[Principal part]\label{def:principal_part}
  Let $\gamma_t$ be the multiplier in $\mathrm{End}(\mathbb{C}^{1+k};\mathbb{C}^{1+k}\times T N)$
  that at a point $(x,y)\in\mathcal{C}$ is defined as
  \begin{equation*}
    \gamma_t(x,y)
    \begin{bsmallmatrix}
      a \\ b \\ 0
    \end{bsmallmatrix} \coloneqq
    \Big( Q_t^{B}
    \begin{bsmallmatrix}
      a \\ b \\ 0
    \end{bsmallmatrix}
    \Big)(x,y)
    \quad \text{ for } a \in \mathbb{C} , b \in \mathbb{C}^k.
  \end{equation*}
\end{define}
The multiplier $\gamma_t$ is in $L^2_{\mathrm{loc}}$, see \cite[Lemma 3.6]{ARR},
as a consequence of the following off-diagonal estimates.
\begin{lemma}[Off-diagonal estimates]\label{lemma:off-diagonal-est}
  Let $E,F \subseteq \mathcal{C}$ two measurable subsets,
  let $\mathcal{V}$ be the vector bundle over the cylinder defined in \eqref{eq:vector_bundle_V},
  and let $\mathcal{H}$ be the Hilbert space $L^2(\mathcal{V},w)$.
  Then there exists a constant $c_B >0$
  depending only on $\kappa_B$ and $\lVert B \rVert_{L^\infty}$ such that 
  \begin{equation}\label{eq:off-diagonal-estimate}
    \lVert \1_E Q_t^{B} \1_F \rVert_{\mathcal{H} \to \mathcal{H}} \lesssim \exp\Big( - c_B \frac{d(E,F)}{t} \Big)
  \end{equation}
  holds for all $t >0$. The implicit constant depends only on $c_B$.
\end{lemma}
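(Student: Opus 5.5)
The plan is the standard Davies--Gaffney argument, first for the resolvents $R_t^B=(I+it\homg{D}B)^{-1}$; the bound for $Q_t^B$ then follows because it is a linear combination of $R_t^B$ and $R_{-t}^B$. We may assume $\rho\coloneqq d(E,F)>t$, since otherwise uniform boundedness of $R_t^B$ (\cref{prop:typeomega}) already gives the claim. The key structural fact is that $\homg{D}$ is a first order operator whose commutator with multiplication by a Lipschitz scalar function $\eta$ is a bounded multiplier: $[\homg{D},\eta]$ acts as $[u_0,u_{\mathcal{C}}]\mapsto[-(\nabla\eta,u_{\mathcal{C}}),\,u_0\nabla\eta]$. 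The weight $w$ cancels in $\frac1w\mathrm{div}\,w(\eta\,\cdot)-\eta\frac1w\mathrm{div}\,w(\cdot)$, so $\lVert[\homg{D},\eta]\rVert_{L^2(\mathcal{V},w)\to L^2(\mathcal{V},w)}\le\lVert\nabla\eta\rVert_\infty$, with no dependence on $[w]_{A_2}$.

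The main step is an exponential conjugation. Set $\phi(z)=\alpha\min\{d(z,F),\rho\}/t$ for a small $\alpha>0$ to be fixed. Then $\phi$ is Lipschitz with $\lVert\nabla\phi\rVert_\infty\le\alpha/t$, vanishes on $F$, and equals $\alpha\rho/t$ on $E$. Consider $e^{\phi}R_t^Be^{-\phi}=(I+it\,e^{\phi}\homg{D}e^{-\phi}B)^{-1}$, justified since $e^{\pm\phi}$ are bounded and preserve $\dom(\homg{D})$. We have
\[
e^{\phi}\homg{D}e^{-\phi}=\homg{D}-[\homg{D},\phi]+\text{(higher order)}=\homg{D}+M_\phi,
\]
and in fact exactly $e^\phi\homg{D}e^{-\phi}=\homg{D}-\mathcal{M}(\nabla\phi)$, where $\mathcal{M}$ is a multiplier with $\lVert\mathcal{M}\rVert\le\alpha/t$. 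Hence
\[
I+it\,e^{\phi}\homg{D}e^{-\phi}B=(I+it\homg{D}B)\bigl(I-R_t^B\,it\,\mathcal{M}B\bigr).
\]
Here $\lVert R_t^B\,it\,\mathcal{M}B\rVert\le C\alpha\lVert B\rVert_\infty$, with $C$ the uniform resolvent bound from \cref{prop:typeomega}, which depends only on $\kappa_B$ and $\lVert B\rVert_\infty$. Choosing $\alpha=(2C\lVert B\rVert_\infty)^{-1}$, a Neumann series gives $\lVert e^{\phi}R_t^Be^{-\phi}\rVert\le2C$. Then
\[
\lVert\1_E R_t^B\1_F\rVert=\lVert\1_E e^{-\phi}(e^{\phi}R_t^Be^{-\phi})e^{\phi}\1_F\rVert\le 2C e^{-\alpha\rho/t}.
\]
The same argument applies with $-t$, and therefore to $Q_t^B=\tfrac{1}{2i}(R_{-t}^B-R_t^B)$, giving \eqref{eq:off-diagonal-estimate} with $c_B=\alpha$.

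The main obstacle is rigour rather than ideas. One must verify that $e^{\pm\phi}$ maps $\dom(\homg{D})$ into itself (the distributional definitions as in \cite[\S 2.2]{AMR}) and that the product rule holds for the weighted divergence with Lipschitz $\eta$ on the cylinder. To handle this, I would check the identity on smooth compactly supported sections, which are dense in the graph norm, and pass to the limit, using only that $\nabla\phi\in L^\infty$ and that the metric $d$ on $\mathcal{C}$ is geodesic, so that $|\nabla d(\cdot,F)|\le1$ almost everywhere. The remaining care point is that the bound on the conjugated resolvent is independent of $t$; this holds because the perturbation $it\mathcal{M}$ has norm at most $\alpha$ at every scale.
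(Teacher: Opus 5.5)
Your proof is correct and is essentially the argument the paper relies on. The paper gives no proof of its own: it defers to \cite[Lemma 3.1]{AMR} and the references therein, where exponential off-diagonal bounds come from this kind of Davies-type exponential conjugation of the resolvents, driven by the fact that $[\homg{D},\eta]$ is a pointwise multiplier of norm at most $\lVert\nabla\eta\rVert_\infty$. Your remark that this commutator involves neither $w$ nor any zero order terms is exactly the ``inspection'' the paper alludes to when passing from $D$ to $\homg{D}$ on $\mathcal{C}$.

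Two minor points. The reduction to $d(E,F)>t$ is not needed. Also, invariance of $\dom(\homg{D})$ under $e^{\pm\phi}$ follows directly from the product rule for bounded Lipschitz multipliers, using $L^2(\mathcal{C},w)\subset L^1_{\mathrm{loc}}$ for $w\in A_2$, so no density argument is required.
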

For a proof, we refer to \cite[Lemma 3.1]{AMR} and references therein.
Proofs of off-diagonal estimates go back to \cite[Proposition 5.2]{zbMATH06113005} and \cite[Lemma 5.3 and \S 5]{AndreaAlanAndrew2013}.
The argument goes through for our $\homg{D} B$ operator on the cylinder $\mathcal{C}$.
Inspection of the proof shows that replacing the inhomogeneous $D$ by $D^0$ does not affect the argument.
We also take the opportunity to correct the statement of
\cite[Lemma 3.1]{AMR}: the power $N$ appearing there is 
redundant and should be put to $N=0$.
 
\subsection{Principal part approximation}
We control the difference $Q_t^B - \gamma_t E_t$.
A novelty is the use of %
\cref{lemma:ygradest} for vertical gradient fields on the cylinder.

\begin{prop}\label{lemma:half_principal_part}
  For all $t \ge 1$, the bound
  \begin{equation}\label{eq:half_principal_part}
    \lVert (Q_t^{B} - \gamma_t E_t) u \rVert^2_{L^2(\mathcal{C},w)} \lesssim \lVert t \nabla (u_0, u_{\mathbb{R}}) \rVert^2_{L^2(\mathcal{C},w)}+
    \lVert u_N \rVert^2_{L^2(\mathcal{C},w)}
  \end{equation}
  holds for all Sobolev sections $u=(u_0, u_{\mathbb{R}}, u_N)^\transpose \in H^1(\mathcal{C};\mathcal{V},w)$.
\end{prop}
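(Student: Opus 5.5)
The plan is to split $u$ into its horizontal–scalar part and its vertical part and treat them separately. Write $u = v + u^\perp$, where $v \coloneqq (u_0, u_{\mathbb{R}}, 0)^\transpose$ and $u^\perp \coloneqq (0,0,u_N)^\transpose$.

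\emph{The vertical part.} By \cref{def:E_t} the vertical component is never averaged, so $E_t u^\perp = 0$. Hence
\begin{equation*}
(Q_t^B - \gamma_t E_t)u^\perp = Q_t^B u^\perp .
\end{equation*}
The uniform boundedness of $Q_t^B$ on $L^2(\mathcal{V},w)$, coming from \cref{prop:typeomega}, then gives the term $\lVert u_N\rVert^2_{L^2(\mathcal{C},w)}$ in \eqref{eq:half_principal_part}. This is exactly where the choice not to average $u_N$ pays off: no Poincaré inequality for vertical fields is needed in this step.

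\emph{Reduction for the horizontal–scalar part.} Fix a large cube $Q = P\times N \in \mathcalboondox{D}_t$. On $Q$ the function $E_t v$ equals the constant section $c_Q \coloneqq (\langle u_0\rangle^w_Q, \langle u_{\mathbb{R}}\rangle_Q, 0)$, where the first average is weighted and the second unweighted. Since $c_Q$ has a constant horizontal part and zero vertical part, it is a parallel (hence globally defined) section of $\mathcal{V}$. By \cref{def:principal_part}, on $Q$ we have $\gamma_t E_t v = Q_t^B c_Q$, understood as in \cite[Lemma 3.6]{ARR} as the $L^2_{\mathrm{loc}}$ limit of $Q_t^B(c_Q \1_{B})$ over large sets $B$; the off-diagonal estimates \eqref{eq:off-diagonal-estimate} make this limit exist. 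Therefore
\begin{equation*}
\1_Q (Q_t^B-\gamma_t E_t)v = \sum_{j\ge 0} \1_Q Q_t^B \big(\1_{A_j(Q)}(v - c_Q)\big),
\end{equation*}
where $A_0(Q)$ is the union of the cubes in $\mathcalboondox{D}_t$ adjacent to $Q$, and $A_j(Q) \coloneqq 2^{j+1}Q\setminus 2^jQ$ for $j\ge 1$, with dilations taken in the $\mathbb{R}^k$ factor only. By \cref{lemma:off-diagonal-est}, since $d(Q, A_j(Q))\gtrsim 2^j t$ for $j\ge1$, the $j$-th term is bounded in norm by $e^{-c2^j}\lVert v-c_Q\rVert_{L^2(2^{j+1}Q,w)}$.

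\emph{Poincaré step.} I would then bound
\begin{equation*}
\lVert v-c_Q\rVert_{L^2(2^{j+1}Q,w)} \le \lVert v - c_{2^{j+1}Q}\rVert_{L^2(2^{j+1}Q,w)} + w(2^{j+1}Q)^{1/2}\lvert c_{2^{j+1}Q}-c_Q\rvert .
\end{equation*}
\begin{enumerate}[label=(\alph*)]
\item The first term is at most $C\,2^{j}\lVert t\nabla v\rVert_{L^2(2^{j+1}Q,w)}$ by \cref{lemma:poincare_scalar} applied at scale $2^{j+1}t$. That proposition allows the weighted average for $u_0$ and the unweighted one for $u_{\mathbb{R}}$, componentwise; for $u_{\mathbb{R}}$, which consists of $k$ scalar functions, $\nabla$ is the ordinary componentwise gradient.
\item For the second term, use the Hölder/$A_2$ bound stated after \cref{def:E_t}:
\begin{equation*}
\lvert c_{2^{j+1}Q}-c_Q\rvert^2 \lesssim w(Q)^{-1}\int_Q\lvert v-c_{2^{j+1}Q}\rvert^2\D w ,
\end{equation*}
which holds for the weighted and the unweighted averages alike. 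Combined with the doubling of $w\,\mathrm{d}\mu$, which gives $w(2^{j+1}Q)/w(Q)\lesssim 2^{Cj}$, this reduces the second term to the first, up to a factor $2^{Cj}$.
\end{enumerate}
Hence the $j$-th summand is at most $e^{-c2^j}2^{Cj}\lVert t\nabla v\rVert_{L^2(2^{j+1}Q,w)}$.

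\emph{Summation.} By Cauchy–Schwarz in $j$, the square of the sum is controlled by $\sum_j e^{-c2^{j}}2^{Cj}\lVert t\nabla v\rVert^2_{L^2(2^{j+1}Q,w)}$. Summing over $Q\in\mathcalboondox{D}_t$, each point lies in at most $2^{(j+2)k}$ of the sets $2^{j+1}Q$, which is again absorbed by the exponential. This yields $\lVert t\nabla(u_0,u_{\mathbb{R}})\rVert^2_{L^2(\mathcal{C},w)}$.

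The main obstacle I expect is the rigorous identification $\1_Q\gamma_t E_t v = \1_Q Q_t^B c_Q$ together with the telescoping in the Poincaré step. One has to check that $Q_t^B$ acting on constant sections with zero vertical part is well defined on the cylinder, via the off-diagonal limit, and that the polynomial growth of $w$-doubling on the enlarged cylinders $2^{j}P\times N$, whose vertical size stays fixed, is genuinely beaten by the exponential decay $e^{-c2^j}$. I would handle this exactly as in \cite[Lemma 3.6]{ARR}, working with the annuli only in the $\mathbb{R}^k$ variable.
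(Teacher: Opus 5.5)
Your proposal is correct and follows essentially the same argument as the paper. Both localise to dyadic cubes $Q$ of side $\eqsim t$, decompose into annuli with off-diagonal decay, apply the weighted Poincaré inequality on the enlarged cylinders $2^{j+1}Q$, and conclude by finite overlap. The only differences are cosmetic and both are harmless given the exponential decay: you dispose of $u_N$ globally via $E_t u^\perp = 0$ and the uniform boundedness of $Q_t^B$, where the paper keeps it inside the localised sum, and you compare $c_Q$ with $c_{2^{j+1}Q}$ in one step using doubling, where the paper telescopes (and you should make $A_0$ and $A_1$ disjoint so that your annular sum is an identity).
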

\begin{proof}[Proof of \cref{lemma:half_principal_part}]
  The proof follows a classical localisation argument with dyadic cubes, see \cite[Lemma 3.6]{KatoElEscorial2010}, and its weighted version in \cite{ARR}, and we therefore omit some details.
  Concretely it amounts to
  \begin{equation*}
    \sum_{Q \in \mathcalboondox{D}_t} \left\lVert Q_t^{B}
      \begin{bmatrix}
       u_0 - \langle u_0\rangle_{w,Q} \\
        u_{\mathbb{R}} - \langle u_{\mathbb{R}} \rangle_{Q} \\
        u_N
      \end{bmatrix}
    \right\rVert_{L^2(Q,w)}^2 \lesssim  \left\lVert
      \begin{bmatrix}
        t\nabla u_0 \\
        t\nabla u_{\mathbb{R}} \\
        u_N
      \end{bmatrix}
    \right\rVert^2_{L^2(\mathcal{C},w)},
  \end{equation*}
  where $\langle u_0\rangle_{w,Q}$ denotes the weighted average of $u_0$ on $Q$.
  To prove \eqref{eq:half_principal_part}, fix a dyadic cube $Q$ with $\ell(Q) \eqsim t$, and decompose space into $A_j \coloneqq 2^{j}Q \setminus 2^{j-1} Q$,   
  for $j \ge 1$, where for large $j$, $2^j Q$ denotes $2^j P \times N$, with $P$ being a dyadic cube in $\mathbb{R}^k$.
  Let $A_0 = Q$,
  so that the ratio $d(Q,A_j)/t \eqsim 2^j$.
  Then off-diagonal estimates in \eqref{eq:off-diagonal-estimate} give
  \begin{equation*}
    \lVert (Q_t^{B} - \gamma_t E_t) u \rVert^2_{L^2(Q,w)} \lesssim_{d,M} \Big( \sum_{j \ge 0} 2^{-j M} \lVert u - E_t u\1_Q \rVert_{L^2(A_j,w)} \Big)^2  %
  \end{equation*}
  for any $M \ge 0$ that will be chosen later.
  When summing over all $Q \in \mathcalboondox{D}_t$, after applying Cauchy--Schwarz in $j$, and enlarging $A_j$ to $2^{j+1}Q$,
  we have
  \begin{align*}
    \sum_{Q \in \mathcalboondox{D}_t} \Big( \sum_{j \ge 0} 2^{-j M} & \lVert u -  E_t u \1_Q \rVert_{L^2(A_j,w)} \Big)^2 \\
                                       & \lesssim \sum_{j\ge 0} \sum_{Q \in \mathcalboondox{D}_t} 2^{-j M} \lVert u - E_t u \1_Q \rVert_{L^2(2^{j+1}Q,w)}^2 .
  \end{align*}
  For the last component $u_N$, this gives the stated
  estimate after summing in $j$ and $Q$.
  For the first two components of $u - E_t u\1_Q$, we use the Poincaré inequality \cref{lemma:poincare_scalar} (componentwise for 
  $u_\R$).
  We obtain
  \begin{equation}\label{eq:expandind_terms_PPA}
    \left\lVert
      \begin{bmatrix}
        u_0 - \langle u_0\rangle_{w,Q} \\
        u_{\mathbb{R}} - \langle u_{\mathbb{R}} \rangle_{Q} 
      \end{bmatrix}
    \right\rVert_{L^2(2^{j+1}Q,w)}^2 \lesssim 2^{2(j+1)} t^2 \left\lVert
      \begin{bmatrix}
        \nabla u_0 \\
        \overline{\nabla} u_{\mathbb{R}} 
      \end{bmatrix}
    \right\rVert^2_{L^2(2^{j+1}Q,w)}.
  \end{equation}
  Note that the averages are over the cube $Q$, while norms are over the enlarged $2^{j+1}Q$.
  This inequality follows from the classical Poincaré inequality via a telescoping argument
  using the averages of the intermediate cubes: %
  \begin{equation*}
    u - \langle u \rangle_Q^w =   \big(u - \langle u \rangle_{2^{j+1}Q}^w\big) + \big(\langle u \rangle_{2^{j+1}Q}^w - \langle u \rangle_{2^{j}Q}^w\big) + \dots + \big(\langle u \rangle_{2Q}^w - \langle u \rangle_Q^w \big).
  \end{equation*}
  Then, for each group in parenthesis,
  the average over the smaller cube $2^{j}Q$ is expanded.
  Using doubling of the weighted measure, Hölder and Poincaré inequality \eqref{eq:poincare_scalar} we have
  \begin{equation*}
    \lvert \langle u \rangle_{2^{k+1}Q}^w - \langle u \rangle_{2^{k}Q}^w \rvert
    \lesssim \Big( \fint_{2^{k+1}Q} \lvert u - \langle u \rangle_{2^{k+1}Q}^w \rvert^2 \D{w} \Big)^{1/2} 
    \lesssim \ell(2^{k+1}Q) \Big( \fint_{2^{k+1}Q} \lvert \nabla u \rvert^2 \D w \Big)^{1/2}. 
  \end{equation*}
  The Poincaré inequality on the enlarged cube $2^{j+1}Q$ follows by summing over $k$.
  The proof is now completed by taking $M$ large enough and summing over all cubes,
  since the family of enlarged cubes $\{2^{j+1}Q, Q \in \mathcalboondox{D}_t\}$ is finite overlapping depending on $j$.
\end{proof}

We now use the smoothing operator $P_t$ from \cref{def:P_t} to write
\begin{equation}\label{eq:insert_P_t}
  Q_t^B - \gamma_t E_t = Q_t^B(I - P_t) + (Q_t^B - \gamma_t E_t)P_t + \gamma_t E_t(P_t - I).
\end{equation}

\begin{prop}\label{prop:principal_part}
  For all $u \in \overline{\mathsf{im}(\homg{D})}$ %
  we have
  \begin{equation*}
    \int_1^\infty \lVert (Q_t^{B} - \gamma_t E_t) u \rVert^2_{L^2(\mathcal{C},w)} \frac{\D{t}}t \lesssim \lVert u \rVert^2_{L^2(\mathcal{C},w)} .
  \end{equation*}
\end{prop}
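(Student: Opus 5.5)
We use the splitting \eqref{eq:insert_P_t} and estimate the three terms separately in $L^2(\frac{\D t}{t}; L^2(\mathcal{C},w))$ over $t\ge 1$.

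\emph{Middle term.} Apply \cref{lemma:half_principal_part} to $P_t u$. This gives
\begin{equation*}
\lVert (Q_t^B-\gamma_tE_t)P_tu\rVert^2 \lesssim \lVert t\nabla (P_tu)_0\rVert^2 + \lVert t\overline{\nabla}(P_tu)_{\mathbb{R}}\rVert^2 + \lVert (P_tu)_N\rVert^2 .
\end{equation*}
The first two terms integrate to $\lesssim\lVert u\rVert^2$ by \cref{lemma:square_function_P_t}, since $\overline{\nabla}$ of a horizontal component is controlled by $\overline{\nabla}$ of the full vector field. The vertical term is the delicate one, as it carries no factor $t$. Write $u_{\mathcal{C}}=\nabla g=\mathcal{R}h$ with $h=(-\Delta)^{1/2}g$. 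By \eqref{eq:interaction_P_t_and_Riesz} we have $(P_tu)_N=\nabla_y(I-t^2\Delta)^{-1}(-\Delta)^{1/2}h$. Set $f_t= t^2\Delta(I-t^2\Delta)^{-1}(-\Delta)^{-1/2}h$. Then
\begin{equation*}
(P_tu)_N=-t^{-2}\,\nabla_y(-\Delta)^{-1} f_t\cdot(-1) ,
\end{equation*}
up to sign, and \cref{lemma:ygradest} gives
\begin{equation*}
\lVert (P_tu)_N\rVert\lesssim t^{-2}\lVert f_t\rVert = t^{-1}\lVert t(-\Delta)^{1/2}(I-t^2\Delta)^{-1}h\rVert .
\end{equation*}
Since $t^{-1}\le 1$ on $t\ge1$, the square function estimate \cref{lemma:weighted_square_function_est_cylinder} together with the Riesz bounds \cref{lemma:Riesz_bounds} yields $\int_1^\infty\lVert (P_tu)_N\rVert^2\frac{\D t}{t}\lesssim\lVert h\rVert^2\eqsim\lVert u_{\mathcal{C}}\rVert^2$. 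This step, which uses the compactness of $N$ through \cref{lemma:ygradest}, is where the vertical component is handled without averaging it. It is the main novelty, and I expect it to be the main point to check: in particular that the weighted Riesz and square function bounds apply to $h$ in the unweighted functional calculus.

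\emph{First term.} Since $u\in\overline{\ran(\homg D)}$, write $I-P_t$ via $t^2$ times the Laplacians. For the scalar part, $(I-P_t)u_0=-t^2\Delta_w(I-t^2\Delta_w)^{-1}u_0$. Since $\Delta_w=-(\homg D)^2$ on scalars, $(I-P_t)u_0$ equals $t\homg D$ applied to $t\nabla(I-t^2\Delta_w)^{-1}u_0$, a gradient field. For the vector part, $(I-t^2\Delta_H)^{-1}$ acting on $\nabla g$ gives
\begin{equation*}
(I-P_t)\nabla g=-t^2\nabla\Delta(I-t^2\Delta)^{-1}g = t\nabla\big(-t\,\mathrm{div}\,(I-t^2\Delta_H)^{-1}\nabla g\big).
\end{equation*}
Since $\nabla=\homg D$ on scalars, in both cases $(I-P_t)u=t\homg D\, v_t$. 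Then $Q_t^B(I-P_t)u=Q_t^B t\homg D v_t = t\homg D B P_t^B\, t\homg D v_t$. Writing $v_t=B^{-1}(Bv_t)$ is awkward, so instead I use $Q_t^B t\homg D=(I-P_t^B)B^{-1}$ restricted appropriately, or more simply the uniform boundedness of $t\homg DBP_t^B t\homg D$ composed with the estimate that $v_t$ is controlled by square functions. Concretely, $\lVert Q_t^B(I-P_t)u\rVert\lesssim\lVert t\nabla(I-t^2\Delta_w)^{-1}u_0\rVert+\lVert t\,\mathrm{div}(I-t^2\Delta_H)^{-1}\nabla g\rVert$. The first term is handled by \eqref{eq:scalar_part_square_function}. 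The second is bounded by the $\overline\nabla$ square function \eqref{eq:vector_part_square_function}, after checking that $\lvert\mathrm{div}\,v\rvert\lesssim\lvert\overline\nabla v\rvert$ pointwise. If the weighted divergence $\frac1w\mathrm{div}\,w$ appears instead, this step needs adjustment, and I would rather route it through $(-\Delta)^{1/2}$ and \cref{lemma:weighted_square_function_est_cylinder} directly.

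\emph{Third term.} Use $\lVert\gamma_tE_t f\rVert\lesssim\lVert f\rVert$ uniformly in $t$. This follows from off-diagonal estimates, \cref{lemma:off-diagonal-est}, giving $\sup_Q\fint_Q\lvert\gamma_t\rvert^2\D w\lesssim1$, together with the boundedness of $E_t$. Next, $E_t(P_t-I)u=E_t(P_t-I)u$ only sees the scalar and horizontal components. By the same $t\homg D v_t$ representation and an integration by parts against the constant on each cube $Q=R\times N$, the vertical parts integrate to zero over $N$, and the horizontal divergence is controlled by boundary terms. More simply, $\lVert E_t(P_t-I)u\rVert\lesssim\lVert (P_t-I)u\rVert$ in the scalar-plus-horizontal components, which is controlled by the same square functions as in the first term via $E_t t\homg D v_t=E_tt\,\mathrm{div}$. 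Failing that, I would apply the standard trick $E_t(I-P_t)=E_t(I-P_t)$ with the duality estimate $\lVert E_t\, t\homg D v\rVert\lesssim\lVert v\rVert$ for $t\ge1$. That estimate holds since $E_t t\homg D$ has adjoint $t\homg D E_t^*$ restricted to piecewise constants with jumps of size $t$, handled by a Poincaré/averaging argument. It reduces again to \eqref{eq:scalar_part_square_function} and \eqref{eq:vector_part_square_function}.
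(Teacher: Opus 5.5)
Your handling of the first two terms of \eqref{eq:insert_P_t} is correct and matches the paper. The middle term goes through \cref{lemma:half_principal_part} and \cref{lemma:square_function_P_t}, with the vertical part handled by \cref{lemma:ygradest} and $t\ge1$. (Note that $(P_tu)_N=\nabla_y(I-t^2\Delta)^{-1}(-\Delta)^{-1/2}h$, with exponent $-1/2$; your $f_t$ already uses this.) The first term goes through $(I-P_t)u=t\homg{D}v_t$ and the uniform bound $Q_t^B t\homg{D}=(I-P_t^B)B^{-1}$ on $\dom(\homg{D})$.

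The third term, however, has a genuine gap.

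\emph{First claim.} Your bound $\lVert E_t(P_t-I)u\rVert\lesssim\lVert(P_t-I)u\rVert$ discards the averaging, and $\lVert(P_t-I)u\rVert$ is \emph{not} controlled by square functions. In the first term it was $v_t$, not $(I-P_t)u=t\homg{D}v_t$, that was square integrable. Already for $w=1$ one has $(I-P_t)u\to u$ as $t\to\infty$, so $\int_1^\infty\lVert(I-P_t)u\rVert^2\,\D t/t=\infty$ for every $u\neq0$.

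\emph{Fallback claim.} The estimate $\lVert E_t\,t\homg{D}v\rVert\lesssim\lVert v\rVert$ is false as well. For scalar $v=\phi$, its horizontal part on $Q=R\times N$ is $t\fint_Q\nabla_x\phi\,\D\mu$, which by the divergence theorem is a normalised boundary integral over $\partial R\times N$. Take $\phi$ equal to $1$ on one face of $R$ and supported in a layer of width $\delta$ around it. Then the left side stays $\gtrsim w(Q)^{1/2}$ while $\lVert\phi\rVert_{L^2(\mathcal{C},w)}\to0$. Equivalently, the formal adjoint involves divergences of piecewise constant fields, which are measures on cube faces. Your observation that integration by parts produces boundary terms is the right mechanism, but it must be quantified and then combined with almost orthogonality in $t$.

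\emph{What is missing.} This is what the paper does, following \cite[Lemma 3.14]{ARR}. One uses the reproducing formula $u=c\int_0^\infty\mathbf{Q}_s u\,\D s/s$, the boundedness of $\gamma_tE_t$, and $E_t^2=E_t$. Schur's test then reduces the matter to showing $\lVert E_t(P_t-I)\mathbf{Q}_s\rVert\lesssim\min\{t/s,s/t\}^\alpha$.
\begin{itemize}
\item For $t<s$ this follows from functional calculus.
\item For $s<t$ the decay must come from the cancellation in $E_t$. This is supplied by the trace lemma \cite[Lemma 3.13]{ARR}, which bounds $\lvert\fint_Q\nabla_x\phi\,\D\mu\rvert^2$ by $\ell(Q)^{-2\theta}$ times $(\cdot)^{1-\theta}(\cdot)^{\theta}$ of weighted averages of $\lvert\nabla\phi\rvert^2$ and $\lvert\phi\rvert^2$ over $Q$. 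With $\phi=s(I-s^2\Delta)^{-1}h$ this gives $\lVert E_t\mathcal{R}Q_sh\rVert\lesssim(s/t)^\theta\lVert h\rVert$, and analogously for $E_t^wQ_s^w$ on the scalar part.
\end{itemize}
Applying the trace lemma directly to your $v_t$ would only give, schematically, $\lVert E_t\,t\homg{D}v_t\rVert^2\lesssim\lVert h\rVert^{2(1-\theta)}\lVert v_t\rVert^{2\theta}$. This need not be integrable in $\D t/t$, so the decomposition in $s$ is essential, not optional.
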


\begin{proof}%
  We decompose the integrand as in \eqref{eq:insert_P_t} and need to control three terms.
  
Consider first the second term.
  The first two components are controlled by combining \eqref{eq:half_principal_part} 
  with
  the square function estimates for $P_t$ in \cref{lemma:square_function_P_t}.
  For the last component, it remains to show that
  \begin{equation*}
    \int_1^\infty \lVert (P_t u)_N \rVert^2_{L^2(\mathcal{C},w)} \frac{\D{t}}t \lesssim \lVert u \rVert^2_{L^2(\mathcal{C},w)}.
  \end{equation*}
  To see this, we write $(u_\R,u_N)= \mathcal{R}h$
  and use \cref{eq:action_Hodge-Laplacian}
  to get
  \begin{equation*}
    (P_t u)_N= \nabla_y(1-t^2\Delta)^{-1} (-\Delta)^{-1/2}h.
  \end{equation*}
  \cref{{lemma:ygradest}} shows that
  \begin{equation*}
    \lVert \nabla_y(1-t^2\Delta)^{-1} (-\Delta)^{-1/2}h \rVert^2_{L^2(\mathcal{C},w)}
    \lesssim
    \lVert (-\Delta)^{1/2}(1-t^2\Delta)^{-1} h \rVert^2_{L^2(\mathcal{C},w)},
  \end{equation*}
  and so \cref{lemma:weighted_square_function_est_cylinder}
  gives the needed estimate since $t\ge 1$.  

  The first term is controlled by factorising 
  $I - P_t$ as
  \begin{equation*}
    (I - P_t)
    \begin{pmatrix}
      f \\ \mathcal{R}h
    \end{pmatrix} = t
    \begin{pmatrix}
      0 & -\mathrm{div}_w \\
      \nabla & 0
    \end{pmatrix}
    \begin{pmatrix}
      t(-\Delta)^{1/2}(I - t^2\Delta)^{-1} h \\
      t\nabla (I - t^2 \Delta_w)^{-1} f
    \end{pmatrix},
  \end{equation*}
  using 
  \eqref{eq:action_Hodge-Laplacian}.

  Now the uniform boundedness of the operator $Q_t^B t \homg{D} B$ combined with 
  the square function estimates for $f$ and $h$ that have been proved in \eqref{eq:scalar_part_square_function} and in \eqref{eq:vector_part_square_function_bound} respectively, give
  \begin{equation*}
    \int_1^\infty \left\lVert Q_t^{B} (I - P_t) \begin{bsmallmatrix}
      u_0 \\ u_{\mathbb{R}} \\ u_N
    \end{bsmallmatrix} \right\rVert^2_{L^2(\mathcal{C},w)} \frac{\mathrm{d}t}{t} \lesssim \left\lVert \begin{bsmallmatrix}
      u_0 \\ u_{\mathbb{R}} \\ u_N
    \end{bsmallmatrix} \right\rVert^2_{L^2(\mathcal{C},w)} .
\end{equation*}  

The last term in \eqref{eq:insert_P_t} is controlled as in \cite[Lemma 3.14]{ARR}, we highlight the novelties.
We denote by $u_{\mathcal{C}} = [u_{\mathbb{R}}, u_N]^\transpose \in T\mathcal{C}$. We use the Calderón reproducing formula
\begin{equation*}
   \begin{bmatrix}
      u_0 \\ u_{\mathcal{C}}
    \end{bmatrix}
    = c \int_0^\infty \mathbf{Q}_s
    \begin{bmatrix}
      u_0 \\ u_{\mathcal{C}}
    \end{bmatrix} \frac{\mathrm{d}s}s, \quad \text{ with }\quad  \mathbf{Q}_s \coloneqq
    \begin{bmatrix}
      Q_s^w & 0 \\
      0 & Q_s^H
    \end{bmatrix}
\end{equation*}
where
$Q_s^w \coloneqq s(-\Delta_w)^{1/2} (I-s^2\Delta_w)^{-1}$,
$Q_s^H \coloneqq s(-\Delta_H)^{1/2} (I-s^2\Delta_H)^{-1}$,
and $c < \infty$ is a positive constant. 
Note that the Riesz transform intertwines the Hodge-Laplacian and the scalar Laplacian $\Delta$, meaning that
\begin{equation*}
  \Delta_H \mathcal{R} h = \mathcal{R} \Delta h
\end{equation*}
and similarly for functions of $\Delta_H$ and $\Delta$ in their functional calculus.
We denote by $Q_s \coloneqq s(-\Delta)^{1/2} (I-s^2\Delta)^{-1}$.
Since $E_t^2 = E_t$,
and $\gamma_t E_t$ is bounded on $L^2(\mathcal{C},w)$, %
by Schur estimates it is enough to show that
\begin{equation*}
  \lVert E_t(P_t - I) \mathbf{Q}_s \rVert \lesssim (\min\{t/s,s/t\} )^\alpha
\end{equation*}
for some $\alpha > 0$, see \cite[Theorem 4.6.3]{MR2463316}.
Since $E_t$, $(P_t - I)$ and $\mathbf{Q}_s$ are all block-diagonal operators,
we can study their action on the scalar and vector part separately.
The proof for the scalar part is as in the proof of \cite[Lemma 3.14]{ARR}:
using the boundedness of $E_t^w$ on $L^2(\mathcal{C},w)$
and the functional calculus of the other operators,
we bound
\begin{equation}\label{eq:cases_for_Schur_in_trace_term}
  \lVert E_t^w ((I - t^2\Delta_w)^{-1} - I) Q_s^w\rVert \lesssim
  \begin{cases}
    \frac{t}s \lVert Q_t^w (I-(I - s^2\Delta_w)^{-1})\rVert  & \text{if } t < s, \\
    \frac{s}t \lVert (I - s^2\Delta_w)^{-1}Q_t^w \rVert + \lVert E_t^w Q_s^w \rVert  & \text{if } s < t.
  \end{cases}
\end{equation}
Apply the trace lemma \cite[Lemma 3.13]{ARR} to the last term to conclude.

The novelties are in the vector part, because the actions of $P_t$ and $E_t$ on vectors are different.
Since $u \in \overline{\ran(\homg{D})}$, write $u_{\mathcal{C}} = [u_{\mathbb{R}}, u_N]^\transpose = \nabla g$ for some $g$, and let $h \coloneqq (-\Delta)^{1/2}g$, so that $\nabla g = \mathcal{R}h$.
By \eqref{eq:interaction_P_t_and_Riesz} and holomorphic functional calculus as in the scalar case above, we have
\begin{equation*}
  \lVert E_t ((I - t^2\Delta_H)^{-1} - I) Q_s^H \mathcal{R} \rVert \lesssim
  \begin{cases}
    \frac{t}s \lVert Q_t \, s^2(-\Delta)(I - s^2\Delta)^{-1}\rVert  & \text{if } t < s, \\
    \frac{s}t \lVert \mathcal{R} Q_t (I - s^2\Delta)^{-1} \rVert + \lVert E_t \mathcal{R} Q_s \rVert  & \text{if } s < t.
  \end{cases}
\end{equation*}
Since both $E_t$ and $\mathcal{R}$ are uniformly bounded on $L^2(\mathcal{C},w)$,
the only term left to control is
  \begin{equation*}
    E_t\mathcal{R}Q_s h =
    \begin{bmatrix}
      \fint_Q \nabla_x s(I-s^2\Delta)^{-1} h \D{x}\D{y} \\
      0
    \end{bmatrix}.
  \end{equation*}
  This is controlled via the trace lemma \cite[Lemma 3.13]{ARR}: for some $\theta \in(0,1)$ we have %
  \begin{align*}
    \lVert E_t \mathcal{R} & Q_s h \rVert^2_{L^2(\mathcal{C},w)} = \sum_{Q \in \mathcalboondox{D}_t} w(Q) \left\lvert \fint_{Q} \nabla_x s(I-s^2\Delta)^{-1}h \D{\mu} \right\rvert^2 \\
                                       \lesssim &  \sum_{Q \in \mathcalboondox{D}_t} \frac{w(Q)}{\ell(Q)^{2\theta}} \Big( \fint_{Q} \lvert \nabla_x s(I-s^2\Delta)^{-1}h\rvert^2 w \D{\mu}\Big)^{1-\theta} \Big( \fint_{Q} \lvert s(I-s^2\Delta)^{-1} h \rvert^2 w \D{\mu}\Big)^{\theta}\\
                                       \eqsim & \Big(\frac{s}t\Big)^{2\theta} \sum_{Q \in \mathcalboondox{D}_t} \Big( \int_{Q} \lvert \nabla_x s(I-s^2\Delta)^{-1}h\rvert^2 w \D{\mu}\Big)^{1-\theta} \Big( \int_{Q} \lvert (I-s^2\Delta)^{-1} h \rvert^2 w \D{\mu}\Big)^{\theta}\\
    \le & \Big(\frac{s}t\Big)^{2\theta} \lVert \nabla_x s(I-s^2\Delta)^{-1}h\rVert_{L^2(\mathcal{C},w)}^{2(1-\theta)} \lVert (I-s^2\Delta)^{-1} h \rVert_{L^2(\mathcal{C},w)}^{2\theta} \lesssim  \Big(\frac{s}t\Big)^{2\theta} \lVert h \rVert^2_{L^2(\mathcal{C},w)}
  \end{align*}
  since the operators $\nabla_x s(I-s^2\Delta)^{-1} = \mathcal{R} Q_s$ and $(I-s^2\Delta)^{-1}$ are uniformly bounded on $L^2(\mathcal{C},w)$.  
  This proves the bound
  \begin{equation*}
    \int_1^\infty \left\lVert \gamma_t E_t(P_t - I) \begin{bsmallmatrix}
      u_0 \\ u_{\mathbb{R}} \\ u_N
    \end{bsmallmatrix}\right\rVert_{L^2(\mathcal{C},w)}^2 \frac{\mathrm{d}t}{t} \lesssim \left\lVert \begin{bsmallmatrix}
      u_0 \\ u_{\mathbb{R}} \\ u_N
    \end{bsmallmatrix} \right\rVert_{L^2(\mathcal{C},w)}^2
\end{equation*}
and concludes the proof of the principal part approximation.
\end{proof}

%% file: sections/Carleson_estimate.tex
To conclude the proof of \cref{thm:global_QE_cylinder} we have to show that
\begin{equation*}
  \int_1^\infty \lVert \gamma_t E_t u \rVert_{L^2(\mathcal{C},w)}^2 \frac{\mathrm{d}t}t \lesssim \lVert u \rVert_{L^2(\mathcal{C},w)}^2 .
\end{equation*}
Recall that the principal part $\gamma_t$ maps $ \mathbb{C}^{1+k}$ to $\mathbb{C}^{1+k} \oplus T N$,
since the average operator $E_t$ maps to zero the vertical part of vector fields in $T \mathcal{C}$.
Denote by $\lvert\gamma_t\rvert$ the operator norm on the space of linear maps $\mathscr{L}(\mathbb{C}^{1+k}; \mathbb{C}^{1+k}\oplus T N)$.
Note that this space depends on the point $(x,y)$ where $\gamma_t$ is applied to.
Since $\lvert \gamma_t E_t u \rvert \le \lvert E_t u \rvert \lvert \gamma_t \rvert $,
via the Carleson embedding theorem \cite[Chapter II, Theorem 2]{bigStein}, the bound above reduces to
\begin{equation}\label{eq:Carleson_measure_to_show}
  \int_1^{\ell(Q)} \int_{Q} \lvert \gamma_t(x,y) \rvert^2 w(x,y) \mathrm{d}\mu(x,y) \frac{\mathrm{d}t}t \lesssim w(Q)
\end{equation}
for dyadic cubes $Q \in \mathcalboondox{D}_t$ at scale $t \ge 1$.
The standard way to show such estimate is to perform  
first a sectorial decomposition of the space of maps $\gamma_t$.
On $\mathbb{R}^d$ one exploits the fact that the space of linear maps is a finite dimensional vector space,
and so its unit ball is compact.
The main novelty here is the way we deal with the tangent bundle $T N$. New methods are needed 
since the maps $\gamma_t$ cannot be embedded in a single
finite dimensional vector space.
To overcome this problem, we make use of the 
refined sectorial decomposition from 
\cite[Proposition 4.2]{Andreas_localTb},
which only involves a sectorial decomposition of the
domain space $\mathbb{C}^{1+k}$ of the maps $\gamma_t$.
Since this domain is finite dimensional, the decomposition generalizes to the cylinder.
To our best knowledge, this is the first application of \cite[Proposition 4.2]{Andreas_localTb}
where the standard method of sectorially decomposing the space of matrices is not available.

To show \eqref{eq:Carleson_measure_to_show} we use the local $Tb$ theorem \cite[Theorem 1.1]{Andreas_localTb}, adapted to the cylinder.
We claim that it suffices show the existence of a family of test functions $b_Q^v\in~L^2(\mathcal{C};\mathbb{C}^{1+k},w)$,
one for each dyadic cube $Q \in \mathcalboondox{D}$ and for each unit vector $v\in \mathbb{C}^{1+k}$, such that
\begin{equation}\label{eq:b_Q^v_properties}
  \begin{aligned}
    \Real (v, E_Qb_Q^v) & \gtrsim 1, \\
    \int_{\mathcal{C}} \lvert b_Q^v(x,y)\rvert^2 \mathrm{d}w(x,y) & \lesssim w(Q)\quad\text{and} \\
    \iint_{(1,\infty)\times\mathcal{C}} \lvert \gamma_t(x,y) E_t b_Q^v(x,y) \rvert^2 & \frac{\mathrm{d}t\mathrm{d}w(x,y)}t \lesssim w(Q),
  \end{aligned}
\end{equation}
where we denote by $E_{Q}$ the averaging operator defined in \eqref{eq:averaging_operator_E_t} restricted on $Q$.

To see this, we inspect the proof of \cite[Theorem 1.1]{Andreas_localTb}, which we apply to 
\begin{equation*}
  \widetilde{\gamma_t}(x,y)= \1_{\{t>1\}} \gamma_t(x,y),
\end{equation*}
and the Euclidean cubes $Q \subset \mathbb{R}^n$ are replaced by cubes $Q = P \times N \subset \mathcal{C}$ where $P$ is a cube in $\mathbb{R}^k$ with $\ell(P) \ge 1$. Write $\ell(Q):= \ell(P)$.
The measure $\mu$ in \cite{Andreas_localTb} is replaced by the weighted measure $\mathrm{d}w=w(x,y) \mathrm{d}\mu$, where $\mu$ is the Riemannian measure on the cylinder, see \cref{subsec:space_setup}.

The first step is to decompose $(1,\infty)\times\mathcal{C}$ into finitely many sets
$S(v_0)$, on which $|\gamma_t(x,y)v|\eqsim |\gamma_t(x,y)|$ for all $v$ in a neighbourhood
$D(v_0)\subset\mathbb{C}^{1+k}$ of a fixed unit vector $v_0\in \mathbb{C}^{1+k}$.
The proof of \cite[Proposition 4.2]{Andreas_localTb}
adapts to prove this for the section 
$\widetilde{\gamma_t}(x,y)\in \mathscr{L}(\mathbb{C}^{1+k}; \mathbb{C}^{1+k}\oplus T N_y)$.

The second step, for fixed $v_0\in \mathbb{C}^{1+k}$, with $|v_0|=1$, and cube $Q\subset \mathcal{C}$, with $\ell(Q)\ge 1$, is to perform a 
double stopping time construction
as in \cite[Lemma 2.1]{Andreas_localTb}, with $k=2$, and prove that 
\begin{equation*}
  E_R b^{v_0}_{S_2}\in D(v_0)
\end{equation*}
for all $R\in G^W(S_1)\cap G^b(S_2)$.
Here we view $E_R b^{v_0}_{S_2}\in \mathbb{C}^{1+k}$,
$S_1$ is a stopping cube as \cite[Proposition 4.3]{Andreas_localTb} controlling the averages of the weight, and $S_2\subseteq S_1$ is a stopping cube \cite[Proposition 4.4]{Andreas_localTb} controlling the averages of the test function.
Since $\widetilde{\gamma_t}=0$ for $t<1$, we modify the stopping time constructions to not include any stopping cubes $S$ with $\ell(S)<1$ on $\mathcal{C}$.
The auxiliary matrix weight $W$ in the above stopping constructions is  
  \begin{equation*}
    W = \begin{bmatrix}
      1 & 0 \\
      0 & w^{-1} I_k
    \end{bmatrix}
    : \mathcal{C}\to\mathscr{L}(\mathbb{R}^{1+k}),
  \end{equation*}
and since this has constant principal axes, we note
that instead of
\cite[Proposition 3.6]{Andreas_localTb}
one can use \cite[Proposition 2.1]{ARR}.

The third and last step is to construct test functions
$b^v_Q$ with properties \eqref{eq:b_Q^v_properties} as above.
Note that, even though averages $E_R b^v_Q$ in the stopping time argument above were considered to belong to $\mathbb{C}^{1+k}$ by horizontal projection, the proof of 
\cite[Theorem 1.1]{Andreas_localTb} adapts to allow
for test functions $E_R b^v_Q$ being general sections of the 
bundle $\mathscr{L}(\mathbb{C}^{1+k}; \mathbb{C}^{1+k}\oplus T N)$ over $\mathcal{C}$,
since we assume $v\in \mathbb{C}^{1+k}$.
For $v= (v',0)\in \mathbb{C}^{1+k}$
we define test functions
\begin{equation}\label{eq:test_function}
  b_{Q}^v \coloneqq b_{Q}^{v,\sigma} \coloneqq (I + (\sigma \ell \homg{D} B)^2)^{-1}(\1_{Q} v) \eqqcolon P_{\sigma \ell}^B(\1_{Q} v),
\end{equation}
where $\sigma > 0$ is a parameter to be chosen later.
The following straightforward adaption of 
{\cite[Lemma 3.16]{ARR}}
shows that the average $E_Q(b_Q^v)$ can be made arbitrarily close to $v$.
Note that we still have $E_Q(v)=v$ for $v \in \mathbb{C}^{1+k}$.

\begin{lemma}\label{lemma:3.16ARR}
  There exists  a constant $c > 0$ depending only on
  $\lVert B\rVert_{\infty}$, $w$ and dimension,
  and a constant  $\delta = \delta(w) > 0$
  such that for any cube $Q$ with $\ell(Q)\ge 1$, $v \in \mathbb{C}^{1+k}$ and $\sigma > 0$,
  \begin{equation*}
    \lvert E_{Q} b_{Q}^{v,\sigma} - v \rvert \le c \sigma^\delta .
  \end{equation*}
\end{lemma}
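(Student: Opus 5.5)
Write $\ell=\ell(Q)\ge 1$ and $\tau=\sigma\ell$. The plan follows the proof of \cite[Lemma 3.16]{ARR}. Since $P_\tau^B - I = -(\tau \homg{D}B)^2 P_\tau^B = -\tau\homg{D}\,\big(B\,\tau\homg{D}B P^B_\tau\big)$, we have
\begin{equation*}
  b_Q^{v,\sigma} - \1_Q v = -\tau \homg{D}\, \Phi, \qquad \Phi \coloneqq B Q^B_\tau(\1_Q v), \qquad \lVert \Phi\rVert_{L^2(\mathcal{C},w)} \lesssim w(Q)^{1/2}\lvert v\rvert,
\end{equation*}
using uniform boundedness of $Q_\tau^B$. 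Because $E_Q(\1_Q v)=v$ for $v\in\mathbb{C}^{1+k}$, it suffices to bound $\lvert E_Q(\tau\homg{D}\Phi)\rvert$, that is, the weighted average over $Q$ of the scalar component $-\tau\,\mathrm{div}_w \Phi_{\mathcal{C}}$ and the unweighted average over $Q$ of the horizontal component $\tau\nabla_x\Phi_0$. The vertical component is never averaged, which is exactly why $E_Q$ is designed this way.

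Both averages have the form $\fint_Q \tau\,\partial f$, and $\tau/\ell=\sigma$. For the scalar component, $\frac{1}{w(Q)}\int_Q \mathrm{div}(w\Phi_{\mathcal{C}})\,\mathrm{d}\mu$. For the horizontal component, $\fint_Q \partial_{x_j}\Phi_0\,\mathrm{d}\mu$. In both cases only the horizontal divergence contributes after integrating over $Q=P\times N$, since $N$ is closed and so $\int_N \mathrm{div}_y(\cdot)\,\mathrm{d}y=0$. This is the key point: the boundary of $Q$ is $\partial P\times N$ only, and the whole problem becomes a Euclidean-type trace estimate in the $x$-variable. I will apply the trace lemma \cite[Lemma 3.13]{ARR}, adapted to cubes $P\times N$ by integrating out $y$ and using doubling of $w$ and the $A_2$ property on slabs near $\partial P\times N$. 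This gives, for some $\theta\in(0,1)$,
\begin{equation*}
  \Big\lvert \fint_Q \tau\,\partial f \Big\rvert \lesssim \frac{\tau}{\ell}\Big(\frac{1}{w(Q)}\int_Q\lvert f\rvert^2\,\mathrm{d}w\Big)^{\theta/2}\Big(\frac{1}{w(Q)}\int_Q \lvert \ell\,\partial f\rvert^2\,\mathrm{d}w\Big)^{(1-\theta)/2},
\end{equation*}
with unweighted averages controlled by weighted ones through Hölder and $A_2$, as in the bound for $E_t$.

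Next, insert $f=\Phi$. Then $\lVert\Phi\rVert\lesssim w(Q)^{1/2}$, and $\tau\homg{D}\Phi = \1_Qv - b_Q^{v,\sigma}$ has norm $\lesssim w(Q)^{1/2}$. The derivative factor thus gives $\ell\,\lvert\partial\Phi\rvert\sim \sigma^{-1}\lvert\tau\partial\Phi\rvert$, and altogether
\begin{equation*}
  \lvert E_Q b_Q^{v,\sigma}-v\rvert \lesssim \sigma\cdot\sigma^{-(1-\theta)} = \sigma^{\theta},
\end{equation*}
so $\delta=\theta$, depending only on $w$ through the trace lemma.

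The main obstacle is the following. The trace lemma needs the full horizontal derivative $\nabla_x\Phi_0$ and the horizontal divergence of $\Phi_{\mathbb{R}}$ in $L^2$. However, $\homg{D}\Phi$ only controls $\nabla\Phi_0$ (fine) and the total divergence $\mathrm{div}_w\Phi_{\mathcal{C}}$, not $\mathrm{div}_x(w\Phi_{\mathbb{R}})$ separately. I would circumvent this by working directly with $\int_Q \mathrm{div}(w\Phi_{\mathcal{C}})$. Cut off with a smooth $\eta(x)$ equal to $1$ on $P$ minus a boundary layer of width $s\ell$. Then $\int \eta\,\mathrm{div}(w\Phi_{\mathcal{C}}) = -\int \nabla_x\eta\cdot \Phi_{\mathbb{R}}\,w$, which involves no vertical terms. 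Bound the layer contribution by $\frac{\tau}{w(Q)}\,w(\text{layer})^{1/2}\lVert \mathrm{div}_w\Phi_{\mathcal{C}}\rVert$ and the $\nabla\eta$ term by $\frac{\tau}{s\ell\,w(Q)}\,w(\text{layer})^{1/2}\lVert\Phi\rVert$. With $w(\text{layer})\lesssim s^{\eta'}w(Q)$ from $A_\infty$, optimizing in $s$ yields a power $\sigma^\delta$.
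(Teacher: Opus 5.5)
Your proposal is correct and takes essentially the paper's route. The paper only asserts that the proof of \cite[Lemma 3.16]{ARR} adapts, noting $E_Q(v)=v$ for $v\in\mathbb{C}^{1+k}$, and your argument is that adaptation: you write $b_Q^{v,\sigma}-\1_Q v=-\sigma\ell\,\homg{D}\Phi$ with $\Phi=BQ^B_{\sigma\ell}(\1_Q v)$ and bound the two averaged components by a cutoff/trace estimate, using an $A_\infty$ bound on the boundary layer. Your use of a cutoff $\eta(x)$ independent of $y$, which removes the uncontrolled vertical divergence, is exactly the detail the paper leaves implicit; for $\sigma\ge 1$ you should also note the trivial bound $\lvert E_Q b_Q^{v,\sigma}-v\rvert\lesssim 1$.
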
  
We are ready to show that $b_Q^v$ satisfy the desired assumptions.
\begin{prop}
  For any unit vectors $v \in \mathbb{C}^{1+k}$
  and each dyadic cube $Q \in \mathcalboondox{D}$
  with $\ell(Q)\ge 1$,
  there exists a test function $b_Q^v \in L^2(\mathcal{C};\mathbb{C}^{1+k},w \mathrm{d}\mu)$ such that
  \begin{enumerate}
  \item $\lVert b_Q^v \rVert_{L^2(\mathcal{C},w)} \lesssim  w(Q)^{1/2}$,
  \item $\Real (v , E_Q b_Q^v) \gtrsim 1$,
  \item the following bound holds
    \begin{equation}\label{eq:property3_test_function}
      \iint_{Q\times(1,\ell(Q))} \lvert \gamma_t(x,y) E_t b_Q^v(x,y) \rvert^2 w(x,y) \mathrm{d}\mu(x,y) \frac{\mathrm{d}t}{t} \lesssim w(Q).
    \end{equation}
  \end{enumerate}
\end{prop}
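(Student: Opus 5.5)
We take $b_Q^v \coloneqq b_Q^{v,\sigma} = P^B_{\sigma\ell}(\1_Q v)$ from \eqref{eq:test_function}, with $\ell=\ell(Q)$, and fix $\sigma>0$ small at the end depending only on $\lVert B\rVert_\infty$, $\kappa_B$, $[w]_{A_2}$ and structural constants.

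For property (1), we use the uniform boundedness of $P^B_{\sigma\ell}$ on $L^2(\mathcal{C},w)$ from \cref{prop:typeomega}. This gives $\lVert b_Q^v\rVert_{L^2(\mathcal{C},w)} \lesssim \lVert \1_Q v\rVert_{L^2(\mathcal{C},w)} = w(Q)^{1/2}$, since $|v|=1$.

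For property (2), \cref{lemma:3.16ARR} gives $|E_Q b_Q^v - v|\le c\sigma^\delta$. Then $\Real(v,E_Qb_Q^v)\ge 1-c\sigma^\delta\ge 1/2$ once $\sigma$ is chosen with $c\sigma^\delta\le 1/2$.

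Property (3) is the substantive step. We follow \cite[Lemma 3.15]{ARR} and write
\begin{equation*}
\gamma_t E_t b_Q^v = (\gamma_t E_t - Q_t^B) b_Q^v + Q_t^B b_Q^v .
\end{equation*}

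For the second term, write $P^B_{\sigma\ell} = I - (\sigma\ell \homg{D}B)^2 P^B_{\sigma\ell}$. Then
\begin{equation*}
Q_t^B b_Q^v = Q_t^B(\1_Q v) - \tfrac{\sigma\ell}{t}\, Q_t^B\, (t\homg{D}B)\, \sigma\ell\homg{D}B\, P^B_{\sigma\ell}(\1_Qv).
\end{equation*}
Using $t\homg{D}B\,Q_t^B = I - P_t^B$, which is uniformly bounded, and $\sigma\ell\homg{D}B P^B_{\sigma\ell} = Q^B_{\sigma\ell}$, the second piece has norm $\lesssim \frac{\ell}{t} w(Q)^{1/2}$. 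Its contribution to the left side of \eqref{eq:property3_test_function} is then $\lesssim \int_1^\ell (t/\ell)^{0}\ldots$; this is not yet summable, so instead we use the better bound $\min(1,\ell/t)$ only for $t>\ell$. On $1<t<\ell$, we keep $Q_t^B b_Q^v$ whole and apply the quadratic estimate for small scales. On $\mathcal{C}$ this holds for $t\le1$ by \cite{AMR}, but here $t\in(1,\ell)$, so this route is circular.

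We therefore use the $\homg{D}$-structure of the test function instead. Since $b_Q^v - \1_Qv = -(\sigma\ell\homg{D}B)^2P^B_{\sigma\ell}(\1_Qv)\in \ran(\homg{D})$, we get
\begin{equation*}
Q_t^B b_Q^v = Q_t^B(\1_Qv) + \tfrac{t}{\sigma\ell}\,(\ldots)
\end{equation*}
with the factor of $t/\ell$ oriented favourably: $Q_t^B\homg{D} = \tfrac1t\,t\homg{D}B P_t^B\,B^{-1}$ on the range, giving $Q_t^B \homg{D}\,\Phi = t^{-1}(I-P^B_t)$-type bounded terms applied to $\sigma\ell\, B\, Q^B_{\sigma\ell}$-type functions. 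This yields a contribution of size $(t/\sigma\ell)^2 w(Q)$, which integrates over $(1,\ell)$ with $\frac{\mathrm{d}t}{t}$ to $\lesssim \sigma^{-2} w(Q)$. For $Q_t^B(\1_Qv)$ itself, $\homg{D}B(\1_Qv)$ is supported near $\partial Q\times N$ in the weak sense. Using off-diagonal estimates (\cref{lemma:off-diagonal-est}) together with doubling of $w$, we obtain $\int_1^\ell\lVert Q_t^B(\1_Qv)\rVert^2_{L^2(Q,w)}\frac{\mathrm{d}t}{t}\lesssim w(Q)$, as in the boundary-layer argument of \cite{AKMc}: the portion within distance $t$ of $\partial P\times N$ has weighted measure $\lesssim (t/\ell)^{\eta}w(Q)$ for some $\eta>0$ depending on $[w]_{A_2}$.

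For the first term, we apply \cref{lemma:half_principal_part} to $u=b_Q^v$ (localised to $Q$ via off-diagonal estimates). This gives the bound
\begin{equation*}
\lVert t\nabla(b_0,b_{\mathbb{R}})\rVert^2+\lVert b_N\rVert^2 .
\end{equation*}
Since $b - \1_Qv = -\sigma\ell\,\homg{D}B\,Q^B_{\sigma\ell}(\1_Qv)$ is a gradient field in its vector part and $v$ has no vertical component, we have $\lVert b_N\rVert\lesssim w(Q)^{1/2}$ and $\lVert \nabla b\rVert\lesssim (\sigma\ell)^{-1}w(Q)^{1/2}$; the latter follows from $\homg{D}$ of $P^B_{\sigma\ell}$ plus the Riesz/Beurling bounds (\cref{lemma:Riesz_bounds}, \cref{lemma:Beurling_bounds}). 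Integrated over $t\in(1,\ell)$, the gradient part contributes $\int_1^\ell (t/\sigma\ell)^2\frac{\mathrm{d}t}{t}\, w(Q)\lesssim \sigma^{-2}w(Q)$.

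The vertical part is the expected obstacle: $\int_1^\ell\lVert b_N\rVert^2\frac{\mathrm{d}t}t$ gives a factor $\log\ell$. To avoid it, we estimate $b_N = \nabla_y(\cdot)$ through \cref{lemma:ygradest}. This gives $\lVert b_N\rVert\lesssim \lVert \homg{D}b\rVert_{L^2(w)} \lesssim (\sigma\ell)^{-1}w(Q)^{1/2}$, with no $t$-gain, and we then exploit that the relevant vertical term in \cref{lemma:half_principal_part} carries an implicit factor $t$ after a sharper use of off-diagonal decay.
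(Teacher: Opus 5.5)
Properties (1) and (2) are handled exactly as in the paper. Your argument for (3), however, has genuine gaps.

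\textbf{The term $Q_t^B b_Q^v$.} Your splitting $Q_t^B b_Q^v = Q_t^B(\1_Qv)+\frac{t}{\sigma\ell}(\dots)$ is not correct. One computes $Q_t^B(b_Q^v-\1_Qv) = -\frac{\sigma\ell}{t}(I-P_t^B)Q^B_{\sigma\ell}(\1_Qv)$, which carries the unfavourable factor $\sigma\ell/t$.

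More seriously, your bound $\int_1^{\ell}\lVert Q_t^B(\1_Qv)\rVert^2_{L^2(Q,w)}\frac{\mathrm{d}t}{t}\lesssim w(Q)$ rests on the claim that $\homg{D}B(\1_Qv)$ is concentrated near $\partial P\times N$. That claim is false, because $B(\1_Qv)=\1_Q\,Bv$ with $Bv$ merely measurable. At points of $Q$ at distance $\gg t$ from $\mathcal{C}\setminus Q$, off-diagonal estimates give $Q_t^B(\1_Qv)\approx Q_t^Bv=\gamma_tv$. So this bound is essentially the Carleson estimate \eqref{eq:Carleson_measure_to_show} in the direction $v$, which is what you are trying to prove.

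None of this is needed. Directly, $Q_t^Bb_Q^v=\frac{t}{\sigma\ell}P_t^BQ^B_{\sigma\ell}(\1_Qv)$. Hence $\lVert Q_t^Bb_Q^v\rVert_{L^2(\mathcal{C},w)}\lesssim\frac{t}{\sigma\ell}w(Q)^{1/2}$, which integrates to $\lesssim\sigma^{-2}w(Q)$ over $(1,\ell)$.

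\textbf{The principal-part term.} \Cref{lemma:half_principal_part} cannot be applied to $u=b_Q^v$.
\begin{itemize}
\item $b_Q^v$ contains the jump of $\1_Qv$ across $\partial P\times N$.
\item For $b_Q^v-\1_Qv=-\sigma\ell\,\homg{D}BQ^B_{\sigma\ell}(\1_Qv)$ you only control $\homg{D}(Bb_Q^v)$, i.e.\ derivatives of $Bb_Q^v$, not of $b_Q^v$, since $B$ is only bounded and measurable. The Riesz and Beurling bounds do not bridge this.
\item \eqref{eq:half_principal_part} has no factor $t$ in front of $\lVert u_N\rVert^2$. The $\log\ell$ loss you identify is real, and no sharper off-diagonal argument removes it.
\end{itemize}

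The missing idea is to split $b_Q^v=(b_Q^v-\1_Qv)+\1_Qv$ inside the principal-part term.
\begin{itemize}
\item Since $b_Q^v-\1_Qv\in\overline{\mathsf{im}(\homg{D})}$ with norm $\lesssim w(Q)^{1/2}$, \cref{prop:principal_part} controls $(Q_t^B-\gamma_tE_t)(b_Q^v-\1_Qv)$ with no gradient needed. That proposition is a square function bound integrated over $t\in(1,\infty)$, and it handles the vertical part through $P_t$ and \cref{lemma:ygradest}.
\item For the remaining piece, $E_t(\1_Qv)=v$ on $Q$ when $t\le\ell(Q)$, since $Q$ is dyadic and $v\in\mathbb{C}^{1+k}$ has no vertical part. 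Hence on $Q$ one has $(\gamma_tE_t-Q_t^B)(\1_Qv)=Q_t^B(\1_{\mathcal{C}\setminus Q}v)$.
\end{itemize}
This last term is where your boundary-layer argument legitimately applies, using \cref{lemma:off-diagonal-est} and the decay of $w$ on thin layers near $\partial P\times N$. You applied that argument to $Q_t^B(\1_Qv)$ instead of $Q_t^B(\1_{\mathcal{C}\setminus Q}v)$.
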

\begin{proof} Consider the test functions defined in \eqref{eq:test_function}. 
  \begin{enumerate}
  \item The uniform boundedness of $P_t^B$ on $L^2(\mathcal{C},w)$ immediately implies
    \begin{equation}\label{eq:local_L^2_bddness_of_test_function}
      \lVert b_{Q}^v \rVert_{L^2(w)} \lesssim w(Q)^{1/2}.
    \end{equation}

  \item We add and subtract the unit vector $v$,
    then applying Cauchy--Schwarz and \cref{lemma:3.16ARR} to get
    \begin{align*}
      \Real \big(v , E_Q b_Q^v\big) & = \Real \big(v , E_Q(b_Q^v - v)\big) + \lvert v \rvert^2 = \\
                           & = 1 - \Real \big(v , E_Q(v - b_Q^v)\big) \ge 1 - c \sigma^\delta \gtrsim 1.
    \end{align*}
  \item To show \eqref{eq:property3_test_function},
     we use the principal part approximation backwards, following closely the proof of \cite[Theorem 3.3]{ARR}.
     We decompose and bound the integrand
    \begin{align}
       \gamma_t E_t b_{Q}^v & = \gamma_t E_t \big( b_{Q}^v - \1_{Q} v \big) + \gamma_t E_t \1_{Q}v  \nonumber \\
                   & \le \lvert (Q_t^B - \gamma_t E_t) (b_{Q}^v - \1_{Q} v)\rvert \label{eq:terms_for_principal_part1} \\
                   & \quad + \lvert Q_t^B ( b_{Q}^v ) \rvert + \lvert (\gamma_t E_t - Q_t^B)(\1_{Q}v)\rvert,\label{eq:terms_for_principal_part2}
    \end{align}
    where we added and subtracted $Q_t^B\big( b_{Q}^v - \1_{Q} v \big)$ before the inequality.
    For the term in \eqref{eq:terms_for_principal_part1} we can apply \cref{prop:principal_part}
    since $b_{Q}^v - \1_{Q} v \in \overline{\mathsf{im}(\homg{D})}$.
    The first term in \eqref{eq:terms_for_principal_part2}
    equals  $tP_t^B  (\homg{D} B P_{\sigma \ell}^B)(\1_{Q}v)$ with estimate
    \begin{align*}
      \lVert Q_t^B b_{Q}^v \rVert_{L^2(w)} \le \frac{t}{\sigma \ell} \lVert P_t^B \rVert \, \lVert Q_{\sigma \ell}^B \rVert w(Q)^{1/2}\lesssim \frac{t}{\sigma \ell}
      w(Q)^{1/2}.
    \end{align*}
    The last term in \eqref{eq:terms_for_principal_part2} is estimated as in the proof of \cite[Theorem 3.3]{ARR} using off-diagonal estimates for $Q_t^B$ from $\mathcal{C}\setminus Q$ to $Q$,
    and that $\1_Q v \in \mathbb{C}^{1+k}$
    is constant on $Q$.    
  \end{enumerate}
\end{proof}
This concludes the proof of the Carleson estimate \eqref{eq:Carleson_measure_to_show}
and so the proof of \cref{thm:global_QE_cylinder}.

%% file: sections/zero_inj.tex
In this section, we show how \cref{thm:global_QE_cylinder} is used to weaken the geometric hypotheses in~\cite[Theorem 1.1]{AMR}
to the following class of locally cylindrical manifolds.

\begin{define}[Locally Cylindrical Manifolds]\label{def:locally_cylindrical_manifolds}
  Let $(M,g)$ be a complete Riemannian manifold of dimension $d$.
  We make the following hypotheses on the geometry of $M$.
  \begin{enumerate}[itemsep=4mm,leftmargin=0pt,itemindent=*, label=\bfseries (H\arabic*)]
  \item %
    There exists a family $\{\rho_i\}_{i=1}^\infty$ of Lipschitz diffeomorphisms 
    \begin{equation}\label{eq:Lipschitz_diffeo}
      \rho_i \colon (0,1)^{k_i} \times \varepsilon_i N_i \to U_i,
    \end{equation}
    with $k_i \coloneqq d-\text{dim}(N_i)\in\{1,\ldots, d\}$ and $\varepsilon_i\in(0,1)$, between thin cylinders
    $(0,1)^{k_i} \times \varepsilon_i N_i$ with base manifolds $N_i\in\mathcal{N}$,
    and open subsets $U_i\subset M$.
    Here $\mathcal{N}$ is a finite family of closed manifolds of dimensions less than $d$.
    For the Lipschitz constants we assume that
    \begin{equation*}
      \sup_i\max\left\{ \mathrm{Lip}(\rho_i),\mathrm{Lip}(\rho_i^{-1})\right\}=C_1<\infty.
    \end{equation*}
  \item %
    There exists $C_2 \in (0,1)$ such that the sets 
    $K_i= \rho_i([C_2,1-C_2]^{k_i} \times \varepsilon_i N_i)$ form a \emph{uniformly point-finite covering} of $M$, that is,
    $\inf\sum_i \1_{K_i} \ge 1$ and $\sup\sum_i \1_{K_i} <\infty$.
  \item %
    The measure of the $\delta$-fattened set
    $U_i^{\delta}=\{ x\in M ; d(x,U_i)<\delta\}$ grows at most exponentially, that is, there exists $C_3<\infty$
    such that for all $i$ and $\delta\in(0,\infty)$, we have    
    \begin{equation*}
      \mu(U_i^{\delta})\lesssim e^{C_3 \delta} \mu(U_i).
    \end{equation*}
  \end{enumerate}
\end{define}

\begin{remark}
  Note that in (H1) we do not assume any uniform lower bound on the thinness parameters $\varepsilon_i$.
  The collection $\mathcal{N}$ could be allowed to be infinite provided that one assumes suitable uniform estimates of the geometries of these manifolds.

  Hypothesis (H2) is needed %
  when constructing smooth cut-off function for the comparison of resolvents.
  Hypothesis (H3) is needed when using exponential off-diagonal estimates for resolvents to localise the quadratic estimates.
\end{remark}

\begin{lemma}
  Let $(M,g)$ be a locally cylindrical manifold.
  For a fixed $\delta <\infty$, the $\delta$-fattened collection $\{U_i^\delta\}_{i \in \mathbb{N}}$  is also a uniformly point-finite covering of $M$.
\end{lemma}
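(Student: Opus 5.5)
Covering is immediate: $U_i^\delta \supseteq U_i \supseteq K_i$, and the $K_i$ cover $M$ by (H2), so $\inf_x \sum_i \1_{U_i^\delta}(x) \ge 1$. The whole task is the uniform upper bound on $\#\{i : x \in U_i^\delta\}$. The plan is a volume-counting argument: find disjoint sets of comparable measure, one per index, inside a fixed fattening of $x$, and bound the total measure of that fattening using (H3).

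\textbf{Step 1 (comparable measures).} Fix $x$ and let $I_x = \{i : x \in U_i^\delta\}$. By (H1), $\operatorname{diam} U_i \le C_1 \operatorname{diam}\big((0,1)^{k_i}\times \varepsilon_i N_i\big) \le C$, uniformly in $i$, since $\varepsilon_i<1$ and $\mathcal{N}$ is finite. Hence for $i \in I_x$ we have $U_i \subseteq B(x,\delta + C)$. Pick $i_0 \in I_x$. Then $B(x,\delta+C) \subseteq U_{i_0}^{R}$ with $R = 2\delta + 2C$, so (H3) gives
\[
\mu\big(B(x,\delta+C)\big) \lesssim e^{C_3 R}\mu(U_{i_0}).
\]
Symmetrically, for any $i,j \in I_x$ we have $U_j \subseteq U_i^{R}$, so $\mu(U_j) \lesssim e^{C_3R}\mu(U_i)$. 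Thus all $\mu(U_i)$, $i\in I_x$, are comparable to a single number $m$ with constants depending only on $\delta$ and the structural constants.

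\textbf{Step 2 (disjoint pieces and counting).} The $U_i$ themselves may overlap, so I would pass to the compact pieces $K_i$. Since $\rho_i$ is bi-Lipschitz and $\mathcal{N}$ is finite, $\mu(K_i) \eqsim \mu(U_i)$: the Jacobian is bounded above and below, and the cube $[C_2,1-C_2]^{k_i}$ has fixed proportion of $(0,1)^{k_i}$. By (H2) the $K_i$ have overlap at most $L \coloneqq \sup\sum_i \1_{K_i}$. Therefore
\[
\#I_x \cdot m \lesssim \sum_{i\in I_x}\mu(K_i) \le L\,\mu\Big(\bigcup_{i\in I_x} K_i\Big) \le L\,\mu\big(B(x,\delta+C)\big) \lesssim L\,e^{C_3R} m .
\]
This gives $\#I_x \lesssim L e^{C_3 R}$, a bound independent of $x$, as required.

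\textbf{Main obstacle.} The delicate point is justifying $\mu(K_i)\eqsim\mu(U_i)$ and the comparability of measures uniformly in $i$ when $\varepsilon_i \to 0$. The measures themselves scale like $\varepsilon_i^{\dim N_i}$ and can degenerate. Only ratios enter the argument, and these ratios are controlled by the Lipschitz constants of $\rho_i$ together with (H3), not by any lower bound on $\varepsilon_i$.

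It also matters that different $i$ may have very different $\varepsilon_i$. This is handled by Step 1, which uses only (H3) and the uniform diameter bound, and no cylinder structure. If the uniform diameter bound were problematic, I would instead use (H3) with $\delta$ replaced by $\delta + \operatorname{diam}U_i$, which is uniformly bounded by (H1).
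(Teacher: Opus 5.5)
Your proposal is correct and is essentially the paper's argument. Both use the uniform diameter bound from (H1) to place all relevant $K_i$ in a fixed ball around $x$. Both use (H3) together with the bi-Lipschitz cylinder geometry to get $\mu(\text{ball})\lesssim\mu(U_i)\lesssim\mu(K_i)$, and then count indices via the bounded overlap of the $K_i$. The only difference is cosmetic: you pass through a common scale $m=\mu(U_{i_0})$, while the paper compares $\mu(B(x,R))$ directly with each $\mu(K_i)$ using $B(x,R)\subset U_i^{\delta+R}$.
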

\begin{proof}
  By hypothesis $1\le \sum_i \1_{K_i}\le M_0$ for some $M_0<\infty$.
  Fix $\delta>0$.
  For a given $x$, let $I(x)$ be the set of indices $i$ for which $x\in U_i^\delta$.
  Since $d(x,U_i)<\delta$ and the diameters of $U_i$ are controlled by $C_1$, it follows that there exists $R<\infty$ independent of $x$, such that $K_i\subset B(x,R)$ for all $i\in I(x)$.
  Thus   
  \begin{equation}\label{eq:bound_the_sum_K_i}
    \sum_{i\in I(x)} \mu(K_i)\le M_0 \mu\big(\bigcup_{i\in I(x)} K_i\big)\le M_0\mu\big(B(x,R)\big).
  \end{equation}
  However, since $B(x,R)\subset U_i^{\delta+R}$, the assumed volume growth shows that $\mu(B(x,R))\le\mu(U_i^{\delta+R})\lesssim \mu(U_i)\lesssim \mu(K_i)$ for $i\in I(x)$, where we used the cylinder geometry up to $C_1$ in the last step.
  This means that there exists a constant $C = C_{R,\delta} < \infty$ such that
  \begin{equation}
    \label{eq:bound_ball_by_K_i}
    \mu\big(B(x,R)\big) \le C \, \mu(K_i)  \quad \text{ for all } i\in I(x).
  \end{equation}
  Then, by using \eqref{eq:bound_ball_by_K_i}
  and \eqref{eq:bound_the_sum_K_i},
  for any $i\in I(x)$ we have that
  \begin{equation*}
    \lvert I(x) \rvert \frac{\mu\big(B(x,R)\big)}{C} \le \lvert I(x) \rvert \mu(K_i) \le \sum_{i \in I(x)} \mu(K_i) \le M_0 \mu\big(B(x,R)\big).
  \end{equation*}
  Thus $|I(x)|\le C M_0$ for some $C<\infty$.
  This proves that
  $1\le \sum_i \1_{K_i}(x)\le\sum_i \1_{U_i^\delta}(x)\le C M_0$ for all $x\in M$,
  and so that $\{U_i^\delta\}_i$ is uniformly point-finite covering too.
\end{proof}

Let $w$ be a Muckenhoupt weight in $A_2^{R_0}(M)$, for some $R_0 >0$, as defined in \cref{def:A_p^R}.
We consider the vector bundle
\begin{equation*}
  \mathcal{V}_M \coloneqq \mathbb{C} \oplus \mathbb{C} \oplus T M
\end{equation*}
and the differential operator $D_M$ acting on $L^2(w)$-sections of $\mathcal{V}_M$ given by
\begin{equation}\label{eq:inhom_D}
  {D}_M \coloneqq
  \begin{bmatrix}
    0 & I & -\frac{1}{w} \mathrm{div} w \\
    I & 0 & 0 \\
    \nabla & 0 & 0
  \end{bmatrix}.
\end{equation}
Let ${B}_M \in L^\infty(\mathsf{End} \mathcal{V}_M)$ be a multiplication operator which is pointwise accretive %
in the sense that
there exists $\kappa_{{B}_M} > 0$ as in \eqref{eq:kappa_in_accretivity} such that
\begin{equation}\label{eq:pointwise_accretivity}
  \Real \big({B}_M(x) v(x) ; v(x) \big) \ge \kappa_{{B}_M} \lvert v(x) \rvert^2 , \qquad \forall v(x) \in (\mathcal{V}_M)_x.
\end{equation}
The inner product and norm above are induced by the Riemannian metric $g$ on $M$.
Our second main result is the following.
\begin{theorem}\label{thm:QE_zero_inj_radius}
  Let $(M,g)$ be a locally cylindrical manifold as in \cref{def:locally_cylindrical_manifolds} satisfying hypotheses $(\mathrm{H}1)$--$(\mathrm{H}3)$.
  Let $w$ be a Muckenhoupt weight in $A_2^{R_0}(M)$, for some $R_0 > 0$.
  Then ${D}_M{B}_M$ is a bisectorial operator
  (in the sense of \cref{prop:typeomega}) %
  and the quadratic estimate
  \begin{equation}\label{eq:QE_M}
    \int_0^{\infty} \big\lVert t {D}_M {B}_M (I + (t {D}_M {B}_M)^2)^{-1} u \big\rVert^2_{L^2(M,w)} \frac{\mathrm{d}t}{t} \lesssim \lVert u \rVert_{L^2(M,w)}^2
  \end{equation}
  holds for all sections $u$ of the bundle $\mathcal{V}_M$ in $L^2(M,w)$.      
\end{theorem}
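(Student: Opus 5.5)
Bisectoriality of $D_M B_M$ follows as in \cref{prop:typeomega}, since $D_M$ is self-adjoint in $L^2(\mathcal{V}_M,w)$ and $B_M$ is bounded and pointwise accretive. Moreover $D_M$ has closed range, because its zero-order part makes $D_M^2 \ge I$ on $\overline{\ran(D_M)}$. Hence the large scales $t\ge 1$ of \eqref{eq:QE_M} are trivial: there $\lVert \psi_t(D_MB_M)\rVert \lesssim t^{-1}$, which is square integrable against $\mathrm{d}t/t$ on $(1,\infty)$. So the task is the estimate over $\int_0^1$.

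By \cref{lemma:reduction_as_in_AMR}, used in the direction of \cite[Lemma 2.4]{AMR}, we may replace $D_M$ by its homogeneous principal part $D_M^0$ at small scales, since the difference is bounded. Next I localise. Take a smooth partition of unity $\{\eta_i\}$ subordinate to the covering $\{U_i\}$, with $\eta_i=1$ on most of $K_i$ and $\lVert\nabla\eta_i\rVert_\infty\lesssim 1$ uniformly; this is possible by (H1)--(H2) and the uniform Lipschitz bounds. Write $\psi_t(D^0_MB_M)u=\sum_i \psi_t(D^0_MB_M)(\eta_i u)$. Using the exponential off-diagonal estimates of \cref{lemma:off-diagonal-est} (valid on $M$), together with (H3) and the uniform point-finiteness of the fattened covers $U_i^\delta$ from the lemma above, I reduce to proving, uniformly in $i$, the local estimate
\[
\int_0^{t_0}\lVert \1_{K_i}\psi_t(D^0_MB_M)(\eta_i u)\rVert^2\frac{\mathrm{d}t}{t}\lesssim \lVert u\rVert^2_{L^2(U_i,w)}.
\]
Errors at distance $\gtrsim$ const are summable thanks to $e^{-c\delta/t}$ beating $e^{C_3\delta}$ for $t<t_0$ small. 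I also commute the cutoffs using $[D^0_M,\eta_i]$ bounded, via resolvent identities; this costs only $O(t)$ terms, which integrate harmlessly on $(0,t_0)$.

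The local estimate is then pulled back by $\rho_i$ to $(0,1)^{k_i}\times\varepsilon_i N_i$. The pulled-back operator has the form $D^0 \widetilde B$ on the thin cylinder: the metric distortion and the Jacobian are absorbed into new accretive coefficients $\widetilde B$ (bounded via $C_1$) and into a new weight, as in \cite{AMR}. Then rescale by $1/\varepsilon_i$ to $\frac{1}{\varepsilon_i}(0,1)^{k_i}\times N_i$. Since $D^0$ is homogeneous of order one, $t$ becomes $s=t/\varepsilon_i$. Extend the weight (reflecting evenly in the $\mathbb{R}^{k_i}$-directions) to a weight on $\mathbb{R}^{k_i}\times N_i$, and extend $\widetilde B$ by the identity outside the box. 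By \cref{prop:periodic_even_ext}, adapted to the product with $N_i$, together with the local $A_2^{R_0}$ condition, the extended weight lies in $A_2(\mathcal{C})$ with controlled constant. This constant is uniform in $\varepsilon_i$, since rescaling preserves $A_2$ constants and the relevant scales are below $R_0$ after rescaling back.

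\cref{thm:global_QE_cylinder} then yields \eqref{eq:QE_on_rescaled_cylinder} uniformly in $\varepsilon_i$, since it controls all $s\in(0,\infty)$. The constant depends only on the finitely many $N\in\mathcal{N}$, on $C_1$, $[w]$ and $\kappa$. Summing over $i$ with point-finiteness gives the claim.

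The main obstacle is the localisation: making the extension of the weight well-defined on a cylinder piece. I would first try to reflect only in the $x$-variables, since $N$ is already closed. A second difficulty is making the off-diagonal/commutator errors summable independently of $\varepsilon_i$, which is where (H3) enters.
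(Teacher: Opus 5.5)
Your proposal is correct and follows the paper's proof in all essentials. The steps match: large scales via the closed range of $D_M$ as in \cite[Lemma 2.4]{AMR}; localisation by exponential off-diagonal bounds beating the volume growth (H3); pull-back through $\rho_i$, with $B$ extended by the identity and the weight extended evenly and periodically in $x$ only; rescaling by $1/\varepsilon_i$; and an application of \cref{thm:global_QE_cylinder}. The only reordering is that you pass from $D_M$ to $D_M^0$ on $M$ before localising, whereas the paper keeps the inhomogeneous operator until after rescaling and then uses $\lVert D^\varepsilon - D^0\rVert = \varepsilon$ for $s<1/\varepsilon$. By scaling this is the same estimate.

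Three bookkeeping points need fixing:
\begin{enumerate}
\item With a partition of unity, the output cut-off must be a neighbourhood of $\operatorname{supp}\eta_i$ inside $U_i$ rather than $K_i$. Otherwise the terms $\1_{K_j}\psi_t(\eta_i u)$ with $j\neq i$ nearby are uncontrolled. Alternatively, use the paper's split $u=\1_{K_i^\alpha}u+\1_{(K_i^\alpha)^\complement}u$.
\item The extra scalar component of $\mathcal{V}_M$ must be discarded via \cref{prop:typeomega}(ii) before \cref{thm:global_QE_cylinder} applies.
\item The uniform $A_2$ bound for the extension of the weight from the unit-size box requires upgrading $A_2^{R_0}$ to scales $\sim 1$, for instance by chaining with local doubling. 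Your claim that all relevant scales are below $R_0$ holds only if $R_0\gtrsim 1$.
\end{enumerate}
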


This result is new, except in the special case when $w= 1$ and the tangent bundle $TN$ has a parallel global frame.
For example, the unweighted case ($w = 1$) in dimension $d = 2$ follows from \cite[Theorem 6.2]{BandaraMcIntoshGBG}
since the tangent bundle $T\mathbb{S}^1$ has a parallel global frame.
However, even for $d=3$ and $N=\mathbb{S}^2$, there are no global frames on 
$T\mathbb{S}^2$, and the GBG charts required in \cite[Theorem 6.2]{BandaraMcIntoshGBG}
do not exist.
More generally, for \cite[Theorem 6.2]{BandaraMcIntoshGBG} to apply in our geometric setting,
a global frame for $TN$ which is parallel must exist. 
Else the gradient bounds in item (iv) in \cite[Theorem 6.2]{BandaraMcIntoshGBG} will fail on thin cylinders 
$(0,1)^k \times \varepsilon N$ as $\varepsilon \to 0$, and the Poincar\'e inequality for vector fields in \cite[Proposition 5.3]{BandaraMcIntoshGBG} will not hold 
uniformly over $M$ since the constant $C_G\to \infty$ as
$\varepsilon \to 0$.

\begin{remark}
  Note that \cref{thm:QE_zero_inj_radius} strictly contains \cite[Theorem 1.1]{AMR},
  which assumes that $M$ has Ricci curvature and injectivity radius bounded from below.
  Assumptions (H1)--(H3) imply exponential volume growth, %
  see references in \cite[\S 2.1]{AMR},
  and geodesic balls on $M$ are bilipschitz equivalent to euclidean balls by the Anderson--Cheeger result \cite[Theorem 0.3]{AndersonCheeger92}
  as in \cite[Equation (2.2)]{AMR}.
  Hence, we may simply set $N = \{0\}$ and $k = d$ for the covering of $M$.
\end{remark}

\begin{remark}\label{rmk:Gårding-inequality}
  Instead of pointwise accretivity for ${B}_M$ as in \eqref{eq:pointwise_accretivity},
  it suffices to assume a Gårding inequality
  as in \cite[Equation (2.7)]{AMR}. 
  This weaker assumption is possible at the price of constructing a more complicated extension of the pulled-back coefficient $B_\rho$, see \cite[Equation (3.8)]{AMR}.
  We refer to \cite[Lemma 3.5 and \S 3.3]{AMR},
  where accretivity of the extended coefficients is shown,
  and we leave the extension and adaptation of~\cite[Lemma 3.5]{AMR} to our geometric setting to the interested reader.
\end{remark}

\subsection{Proof of \cref{thm:QE_zero_inj_radius}}\label{subsec:proof_zero_inj}
The fact that the operator ${D}_M{B}_M$ is bisectorial follows by self-adjointness of ${D}_M$
and accretivity of $B_M$.
The former property follows by an adaption of \cite[Lemma 2.3]{AMR} to our geometric setting.
  
The rest of this section is devoted to the proof of the quadratic estimate \eqref{eq:QE_M},
which is divided in three parts:
\begin{enumerate}[label=(\alph*)]
\item The global quadratic estimate \eqref{eq:QE_M} is reduced to a \emph{local} quadratic estimate; %
\item the local estimate is pulled-back and extended to a %
  cylinder $\mathcal{C}_{\varepsilon} \coloneqq \mathbb{R}^k \times \varepsilon N$;
\item the local quadratic estimate on $\mathcal{C}_{\varepsilon}$ is derived from~\cref{thm:global_QE_cylinder}.
\end{enumerate}

Here $\varepsilon N$ denotes the Riemannian manifold $(N, \varepsilon^2 g_N)$.

\subsubsection{(a): From global to local}
To show estimate \eqref{eq:QE_M},
it is enough to prove the following \emph{local} quadratic estimate 
\begin{equation}\label{eq:local_QE_M}
  \int_0^{1} \big\lVert t {D}_M {B}_M (I + (t {D}_M {B}_M)^2)^{-1} ( u \1_{K^{\alpha}}) \big\rVert^2_{L^2(K,w)} \frac{\mathrm{d}t}{t} \lesssim \lVert u \rVert_{L^2(K^{\alpha},w)}^2 \,,
\end{equation}
for the $\alpha$-fattening $K^\alpha$ of a compact set $K = K_i$ in the covering of $M$.
This estimate controls the action of the operator on small scales and localised functions. %
This reduction is possible in view of the following known arguments:
\begin{enumerate}
\item Quadratic estimate \eqref{eq:QE_M}
  for inhomogeneous operator ${D}_M {B}_M$ holds for large scales ($t\ge 1$) by \cite[Lemma 2.4]{AMR}.
\item Localisation to sets $K \subseteq M$ in the covering
  follows by applying off-diagonal estimates for $D_MB_M$
to off-set the exponential volume growth.
See \cite[\S 3.1]{AMR} for the argument, which is straightforward to adapt to our geometric setting.
\end{enumerate}

We follow \cite[\S 3]{AMR}.
The $\alpha$-fattenings $\{ K_i^{\alpha} \}_i$ form a finite overlapping family for any finite $\alpha >0$.
Since $K_i \subset U_i$, define $\alpha_1 \coloneqq \sup \{ \alpha > 0 \,\colon\, K_i^{\alpha} \subset U_i\}$, so that $K_i^{\alpha_1} \subset U_i$ for each $i$.
Let $\psi_t(D_M B_M) \coloneqq t {D}_M {B}_M (I + (t {D}_M {B}_M)^2)^{-1}$.
We fix $\alpha \in (0,\alpha_1)$ and decompose the manifold using the covering $\{K_i\}_i$ and
then split the function $u$ on $K_i^{\alpha}$ and its complement: 
\begin{align*}
  \int_0^1 & \|\psi_t({D}_M {B}_M)u\|_{L^2(M, w)}^2 \frac{\mathrm{d}t}{t} \\
  & \lesssim \int_0^1 \sum_{i\in\mathbb{N}} \left(\|\1_{K_i} \psi_t({D}_M {B}_M) \1_{K_i^{\alpha}} u\|_{L^2(M, w)}^2 + \|\1_{K_i} \psi_t({D}_M {B}_M) \1_{(K_i^{\alpha})^\complement} u\|_{L^2(M, w)}^2\right) \frac{\mathrm{d}t}{t}\\
  & \leq \sum_{i\in\mathbb{N}} \int_0^1 \| \psi_t({D}_M {B}_M) \1_{K_i^{\alpha}} u\|_{L^2(K_i, w)}^2 \frac{\mathrm{d}t}{t} + \int_0^1 \sum_{i\in\mathbb{N}} \lVert \psi_t({D}_M {B}_M) \1_{(K_i^{\alpha})^\complement} u\|_{L^2(K_i, w)}^2 \frac{\mathrm{d}t}{t} \\
  & \eqqcolon \romannum{1}_{\mathrm{loc}} + \romannum{1}_{\mathrm{off}}.
\end{align*}
We proceed as in the proof for quadratic estimates on manifolds in \cite[Theorem 1.1]{AMR}.
For $\romannum{1}_{\mathrm{off}}$ we use exponential off-diagonal estimates, which hold in our geometric setting, as in \cite[\S 3.1]{AMR} to 
off-set the exponential volume growth (H3).

\subsubsection{(b): Pull-back and extensions to $\mathcal{C}_{\varepsilon}$}
The local quadratic estimate \eqref{eq:local_QE_M},
which corresponds to the local term $\romannum{1}_{\mathrm{loc}}$,
is pulled-back as in \cite[\S 3.2]{AMR} to a cylinder $\mathcal{C}_{\varepsilon} \coloneqq \mathbb{R}^k \times \varepsilon N$, for some $\varepsilon \in (0,1)$.
Note that the pull-back in \cite[\S 3.2]{AMR} is to $\mathbb{R}^d$, but this can be replaced by any manifold,
in particular $\mathcal{C}_{\varepsilon}$.

Let $\rho_i$ the Lipschitz diffeomorphism given in \eqref{eq:Lipschitz_diffeo}.
In the following, we will suppress the index $i$ in $\rho_i$ and its inverse when it is implicit in the context.
Let $D_\rho$ and $B_\rho$ be the pull-back of $D$ and $B$ defined on $(0,1)^{k} \times \varepsilon N$
via $\rho$ as defined in \cite[\S 3.2]{AMR}.
Let $w_\rho$ be the pull-back $w_\rho(x) \coloneqq (w \circ \rho)(x)$ of $w$ via $\rho$.

  We extend the pulled-back weight $w_\rho \colon (0,1)^k \times N \to \mathbb{R}$ to the whole cylinder $\mathbb{R}^k \times N$,
  by considering the even, periodic extension of $x \mapsto w_\rho(x,y)$ for fixed $y \in N$.
  This extension is a Muckenhoupt weight in $A_2$, by a straightforward extension of \cref{prop:periodic_even_ext} to weights on $\mathbb{R}^k\times N$.

We extend the pulled-back operator to the whole cylinder $\mathcal{C}_{\varepsilon}$.
Let $I$ be the identity operator.
With a small abuse of notation,
we define the extended operators $D_\rho$ and $B_\rho$ to the whole cylinder as
\begin{align*}
  D_\rho & \coloneqq D_{\mathcal{C}_{\varepsilon}} =
       \begin{bmatrix}
         0 & I & - (1/w_{\rho}) \mathrm{div} w_{\rho} \\
         I & 0 & 0 \\
         \nabla & 0 & 0
       \end{bmatrix} , \\
  B_\rho & \coloneqq B_\rho \1_{(0,1)^k \times \varepsilon N} + I \1_{(\mathbb{R}^k \setminus (0,1)^k) \times \varepsilon N}.
\end{align*}
This extension preserves boundedness and pointwise accretivity.

\subsubsection{(c): Local quadratic estimate on $\mathcal{C}_{\varepsilon}$}
Finally, we show how the local quadratic estimate on the thin cylinder $\mathcal{C}_{\varepsilon}$ is deduced from the estimate of $D^0B$ on $\mathcal{C}$.
  
  \begin{prop}[Local quadratic estimates on $\mathcal{C}_{\varepsilon}$]\label{lemma:local_QE}
    Let $N$ be a closed manifold and let $\mathcal{C}_{\varepsilon} \coloneqq \mathbb{R}^k \times \varepsilon N$, for $\varepsilon \in (0,1)$ and $k \in \mathbb{N}$.
    Let $w \in A_2(\mathcal{C}_\varepsilon)$ and $B \in L^\infty(\mathsf{End}(\mathcal{V}_{\mathcal{C}_\varepsilon}))$ be pointwise accretive as in \eqref{eq:pointwise_accretivity}.
    Then the weighted local quadratic estimate 
    \begin{equation*}
      \int_0^{1} \big\lVert t D B (I + (t D B)^2)^{-1} u \big\rVert^2_{L^2(\mathcal{C}_\varepsilon,w)} \frac{\mathrm{d}t}{t} \lesssim \lVert u \rVert_{L^2(\mathcal{C}_\varepsilon,w)}^2
    \end{equation*}      
    holds,
    where 
    \begin{equation*}
      D \coloneqq
      \begin{bmatrix}
        0 & I & -\frac{1}{w} \mathrm{div} w \\
        I & 0 & 0 \\
        \nabla & 0 & 0
      \end{bmatrix}.
    \end{equation*}
      
  \end{prop}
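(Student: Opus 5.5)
The plan is to rescale the thin cylinder $\mathcal{C}_\varepsilon$ to the unit cylinder $\mathcal{C}=\mathbb{R}^k\times N$, apply \cref{thm:global_QE_cylinder} there, and pass from the homogeneous operator $D^0$ back to the inhomogeneous $D$ by the resolvent comparison of \cref{lemma:reduction_as_in_AMR}. It is cleaner to reverse the order: first compare $DB$ with its homogeneous part on $\mathcal{C}_\varepsilon$ itself, then rescale only the homogeneous estimate.

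\textbf{Step 1: reduction to the homogeneous part.} Write $D=D^0_{\mathcal{C}_\varepsilon}+J$, where $D^0_{\mathcal{C}_\varepsilon}$ acts on the components $(u_0,u_{\mathcal{C}})$ as in \cref{subsec:preamble_main_thm}, extended by $0$ on the extra scalar component, and $J$ is the bounded zero-order block with entries $I$. Then $DB-D^0_{\mathcal{C}_\varepsilon}B=JB$ is bounded, with norm at most $\lVert B\rVert_\infty$ and independent of $\varepsilon$. Repeating the resolvent-difference argument in the proof of \cref{lemma:reduction_as_in_AMR} with the roles of $T$ and $T_0$ exchanged, we get
\begin{equation*}
\lVert R_t(DB)-R_t(D^0_{\mathcal{C}_\varepsilon}B)\rVert\lesssim t .
\end{equation*}
Hence $\int_0^1$ of the difference of the $\psi_t$'s is bounded by $\int_0^1 t\,\mathrm{d}t\,\lVert u\rVert^2$. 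So it suffices to prove
\begin{equation*}
\int_0^1\lVert\psi_t(D^0_{\mathcal{C}_\varepsilon}B)u\rVert^2\,\frac{\mathrm{d}t}{t}\lesssim\lVert u\rVert^2 ,
\end{equation*}
uniformly in $\varepsilon$. The bisectoriality of $D^0_{\mathcal{C}_\varepsilon}B$, which is needed here, comes from \cref{prop:typeomega}.

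\textbf{Step 2: rescaling.} Let $S_\varepsilon(x,y)=(x/\varepsilon,y)$ map $\mathcal{C}_\varepsilon$ isometrically (up to the factor $\varepsilon$) onto $\mathcal{C}$. Pull functions back by $\tilde u=u\circ S_\varepsilon^{-1}$, rescaling the vector components appropriately, which is harmless since $D^0$ is homogeneous of order one. Under this map, $D^0_{\mathcal{C}_\varepsilon}$ becomes $\varepsilon^{-1}D^0_{\mathcal{C}}$ with the weight $\tilde w=w\circ S_\varepsilon^{-1}$. The coefficients $\tilde B$ keep the same $L^\infty$ norm and accretivity constant, and the weighted norms scale by the same factor $\varepsilon^{d}$ on both sides. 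Thus $\psi_t(D^0_{\mathcal{C}_\varepsilon}B)$ is conjugate to $\psi_{t/\varepsilon}(D^0\tilde B)$ on $L^2(\mathcal{C},\tilde w)$, and
\begin{equation*}
\int_0^1\lVert\psi_t(D^0_{\mathcal{C}_\varepsilon}B)u\rVert^2\,\frac{\mathrm{d}t}{t}=\int_0^{1/\varepsilon}\lVert\psi_s(D^0\tilde B)\tilde u\rVert^2\,\frac{\mathrm{d}s}{s}\le\int_0^\infty\cdots ,
\end{equation*}
which is exactly the situation of \eqref{eq:QE_on_rescaled_cylinder}. Applying \cref{thm:global_QE_cylinder} to the zero-extended-by-$J$ block finishes the argument. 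The extra scalar component is untouched by $D^0$, so there it lies in the null space and contributes nothing.

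\textbf{Main obstacle: uniformity of the constants.} The implicit constant in \cref{thm:global_QE_cylinder} depends on $[\tilde w]_{A_2(\mathcal{C})}$, so we need $[\tilde w]_{A_2(\mathcal{C})}\lesssim[w]_{A_2(\mathcal{C}_\varepsilon)}$ uniformly in $\varepsilon$. The dilation $S_\varepsilon$ maps balls in $\mathcal{C}_\varepsilon$ to sets comparable to balls in $\mathcal{C}$ only at scales where the $x$- and $y$-dilations agree. Specifically, $S_\varepsilon$ distorts the product metric anisotropically: it stretches $x$ by $1/\varepsilon$ but leaves $N$ of size one. I would handle this by checking the $A_2$ condition on cubes $P\times N'$ in $\mathcal{C}$, using the cube formulation noted after \cref{def:A_p^R}. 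A cube of side $r\le1$ in $\mathcal{C}$ corresponds to a rectangle of size $\varepsilon r\times r$ in $\mathcal{C}_\varepsilon$, i.e. a ball of radius $\varepsilon r$ in the rescaled metric $\varepsilon^2 g_N$. Large cubes $P\times N$ correspond to $\varepsilon P\times\varepsilon N$. Both cases are cubes of $\mathcal{C}_\varepsilon$, so it should suffice to verify that the metric of $\mathcal{C}_\varepsilon$ is exactly $\varepsilon$ times the metric of $\mathcal{C}$ (pulled back). Indeed $\varepsilon N$ carries the metric $\varepsilon^2 g_N$, so $S_\varepsilon$ is a homothety by $1/\varepsilon$, and $A_2$ constants are preserved exactly. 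If this reading of the notation fails, I would instead fall back on the assumption, available in the application, that $w$ is the even periodic extension controlled via \cref{prop:periodic_even_ext}.
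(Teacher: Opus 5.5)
Your overall route is the paper's. Your $S_\varepsilon$ is exactly the paper's $\mathsf{dil}_{1/\varepsilon}$. As you eventually conclude, it is a homothety and preserves $A_2$ constants exactly, so the anisotropy worry and the periodic-extension fallback are unnecessary. You then apply \cref{thm:global_QE_cylinder} to the homogeneous part and compare resolvents to handle the zero-order block. Comparing before rescaling (perturbation of norm $\lesssim\lVert B\rVert_\infty$ for $t\in(0,1)$) is equivalent by scaling to the paper's comparison after rescaling (perturbation $D^\varepsilon-D^0$ of norm $\varepsilon$ for $s\in(0,1/\varepsilon)$). Both cost $\int_0^1 t\,\mathrm{d}t\,\lVert u\rVert^2$.

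The step that fails as you justify it is the final application of \cref{thm:global_QE_cylinder}. That theorem is stated for multipliers on $\mathbb{C}\oplus T\mathcal{C}$. Your $\tilde B$ acts on $\mathbb{C}\oplus\mathbb{C}\oplus T\mathcal{C}$ and in general couples the extra scalar component to the others. Hence the augmented $D^0\tilde B$ is not block diagonal, and it is false that the extra component ``lies in the null space and contributes nothing''. Indeed, $\mathsf{ker}(D^0\tilde B)=\tilde B^{-1}\mathsf{ker}(D^0)$, not $\mathsf{ker}(D^0)$. Writing $\tilde B=(\tilde b_{ij})$ in the three-block decomposition, for $u=(0,f,0)$ one gets $D^0\tilde Bu=\bigl(-\tfrac{1}{\tilde w}\mathrm{div}\,\tilde w(\tilde b_{32}f),\,0,\,\nabla(\tilde b_{12}f)\bigr)$, which is nonzero in general.

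The fix, which is the paper's argument, runs as follows.
\begin{itemize}
\item By \cref{prop:typeomega}(ii), $L^2=\overline{\mathsf{im}(D^0\tilde B)}\oplus\mathsf{ker}(D^0\tilde B)$ topologically, and $\psi_s(D^0\tilde B)$ vanishes on the null space. So it suffices to treat $u\in\overline{\mathsf{im}(D^0\tilde B)}$.
\item This range lies in $\mathcal{H}_1$, the closed subspace of sections with vanishing second component.
\item $\mathcal{H}_1$ is invariant under the resolvents: if $v=R_s(D^0\tilde B)u$ with $u\in\mathcal{H}_1$, then $v=u-isD^0\tilde Bv\in\mathcal{H}_1$.
\item On $\mathcal{H}_1$ one has $D^0\tilde B=D^0B'$, where $B'$ is the compression of $\tilde B$ to the first and third components. $B'$ is bounded and pointwise accretive with the same constant.
\item Therefore \cref{thm:global_QE_cylinder} applies to $D^0B'$, and $\psi_s(D^0\tilde B)u=\psi_s(D^0B')u$ for $u\in\mathcal{H}_1$.
\end{itemize}
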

  
  \begin{proof}[Proof of \cref{lemma:local_QE}]
    For a fixed $\varepsilon \in (0,1)$ local quadratic estimates for $D B$ on the cylinder $\mathcal{C}_{\varepsilon}$ holds by \cite[Theorem 1.1]{AMR},
    since the cylinder $\mathcal{C}_{\varepsilon}$ has Ricci curvature bounded from below and positive injectivity radius.
    The problem here is to handle the scaling parameter $\varepsilon$ uniformly,
    since as $\varepsilon \to 0$, so does the injectivity radius $\mathrm{inj}(\mathcal{C}_{\varepsilon})$.
    The key idea here is that the local quadratic estimate for $D B$ can be \emph{rescaled}.
    For $\lambda > 0$ consider the dilation 
    \begin{equation}
      \label{eq:dilation}
      \begin{aligned}
        \mathsf{dil}_\lambda &\colon \mathbb{R}^k\times N \to  \mathbb{R}^k\times \lambda N, \\
                      & (x,y) \mapsto (\lambda x, \lambda y).
      \end{aligned}
    \end{equation}
    Recall that $\lambda N$ denotes the Riemannian manifold
    $(N,\lambda^2 g_N)$,
    and we write $\lambda y \coloneqq y\in \lambda N$.
    In particular, $\mathsf{dil}_{1/\varepsilon}(\mathcal{C}_\varepsilon) = \mathcal{C}$.
    For $u \in W^{1,2}_{\mathrm{loc}}(\mathcal{C}_{\varepsilon})$, we denote the rescaled function on $\mathcal{C}$ by $u_\varepsilon(x,y) \coloneqq u(\varepsilon x,\varepsilon y)$, 
    then we have
    \begin{equation*}
      (\nabla u)(\varepsilon x,\varepsilon y) = \varepsilon^{-1} (\nabla u_{\varepsilon})(x, y).
    \end{equation*}  
    Let $w \in A_2(\mathcal{C}_\varepsilon)$. Since the $A_2$ condition is invariant under dilation,
    the rescaled weight $w_\varepsilon$ is in $A_2(\mathcal{C})$.
    If we consider smooth sections $u$ of the bundle
  \begin{equation*}
    \mathcal{V}_{\mathcal{C}_{\varepsilon}} \coloneqq \mathbb{C} \oplus \mathbb{C} \oplus T \mathcal{C}_{{\varepsilon}},
  \end{equation*}
  the inhomogeneous operator $D$ on $\mathcal{C}_{\varepsilon}$ defined in \eqref{eq:inhom_D} is similar to
  \begin{equation*}
    \tfrac{1}{\varepsilon} D^\varepsilon \coloneqq
    \begin{bmatrix}
      0 & I & -\frac{1}{\varepsilon}\, \frac{1}{w_\varepsilon}\mathrm{div} w_\varepsilon \\
      I & 0 & 0 \\
      \frac{1}{\varepsilon}\nabla & 0 & 0
    \end{bmatrix}.
  \end{equation*}
  On $\mathcal{C}$ under the weighted $L^2$ isometry  $\mathsf{dil}_\varepsilon^*: u(x,y)\mapsto \varepsilon^{-d/2} u(\varepsilon x, \varepsilon y)$.
  Write $B_\varepsilon \coloneqq \mathsf{dil}_\varepsilon^* B (\mathsf{dil}_\varepsilon^*)^{-1}$ 
  for the multiplier on $\mathcal{C}$ corresponding to $B$.

  By similarity and the change of variables $s = t/\varepsilon$ it suffices to bound
  \begin{equation*}
    \int_0^{1} \big\lVert \psi(\tfrac{t}{\varepsilon} D^{\varepsilon} B_{\varepsilon}) v \big\rVert^2_{L^2(\mathcal{C},w_{\varepsilon})} \frac{\mathrm{d}t}{t} =
    \int_0^{1/\varepsilon} \big\lVert \psi(s D^{\varepsilon} B_{\varepsilon}) v \big\rVert^2_{L^2(\mathcal{C},w_\varepsilon)} \frac{\mathrm{d}s}{s}
  \end{equation*}
  in terms of $\lVert v\rVert^2_{L^2(\mathcal{C},w_\varepsilon)}$ for
  a section $v$ of the bundle $\mathcal{V}_{\mathcal{C}}$.
  Let $\homg{D}$ be the homogeneous operator on the cylinder $\mathcal{C}$ introduced in \cref{subsec:preamble_main_thm}.
  By a small abuse of notation, we denote by $\homg{D}$ also its augmented version
  \begin{equation*}
    \homg{D} \coloneqq
    \begin{bmatrix}
      0 & 0 & -\frac{1}{w_\varepsilon}\mathrm{div} w_\varepsilon \\
      0 & 0 & 0 \\
      \nabla & 0 & 0
    \end{bmatrix}
  \end{equation*}
  to match the size of the coefficients $B_{\varepsilon}$. %
  Quadratic estimates for $D^0 B_{\varepsilon}$ holds by \cref{thm:global_QE_cylinder}
  for any bounded, pointwise accretive $B_{\varepsilon}\in L^\infty(\mathsf{End}(\mathcal{V}_{\mathcal{C}}))$ and any weight $w \in A_2(\mathcal{C})$.
  To see this for general $B_\varepsilon$, note that
  it suffices, by (ii) in \cref{prop:typeomega}, to show these quadratic estimate on 
  $\ran(D^0B_\varepsilon)\subset \mathcal{H}_1$, where
  $\mathcal{H}_1$ are the $L^2(w)$ sections of
  $\mathcal{V}_{\mathcal{C}}$ with zero second component. And on $\mathcal{H}_1$, the operator
  $D^0B_\varepsilon$ coincides with an operator
  that  \cref{thm:global_QE_cylinder} applies to.

  It remains to compare the resolvents of the two operators.
  Let $R_t(T) \coloneqq (I + i t T)^{-1}$ be the resolvent of $T$.
  The difference of the corresponding resolvents in the (rescaled) local quadratic estimate is integrable:
  \begin{align*}
    \int_0^{1/\varepsilon} & \Big\lVert \Big( R_s\big(D^{\varepsilon} B_{\varepsilon}\big) - R_s\big(\homg{D} B_{\varepsilon}\big) \Big)v \Big\rVert^2 \frac{\mathrm{d}s}{s} \\
            & \le \int_0^{1/\varepsilon} s^2 \lVert D^{\varepsilon} - \homg{D} \rVert^2 \, \lVert B_\varepsilon\rVert^2 \lVert v \rVert^2 \frac{\mathrm{d}s}{s}
              \lesssim \varepsilon^2 \int_0^{1/\varepsilon} s \lVert v \rVert^2 \,\mathrm{d}s \lesssim \lVert v \rVert^2
  \end{align*}
  since the difference between the homogeneous and inhomogeneous Dirac operator is the bounded operator 
  \begin{equation*}
    D^{\varepsilon} - D^0 =
    \begin{bmatrix}
      0 & \varepsilon I & 0 \\
      \varepsilon I & 0 & 0 \\
      0 & 0 & 0
    \end{bmatrix}.
  \end{equation*}
  Note also that the rescaled multiplication operator $B_{\varepsilon}(x)$ is uniformly bounded and accretive for $\varepsilon \in (0,1]$.
  Since $\psi(s T)=(R_{-s}(T)-R_s(T))/(2i)$, 
  this concludes the proof of the local quadratic estimates in \cref{lemma:local_QE}.
\end{proof}

\begin{proof}[{Proof of \cref{thm:QE_zero_inj_radius}}]
  The quadratic estimate in \cref{lemma:local_QE} implies the quadratic estimate \eqref{eq:local_QE_M}
  by adapting the argument in  
  \cite[Proof of Theorem 1.1]{AMR} to our geometric setting.
  The idea is to estimate the difference of the two
  $DB$ resolvents locally on the chart
  $\rho_i \colon (0,1)^{k_i} \times \varepsilon_i N_i \to U_i$ from (H1).
  See also \cite[Lemma 7.2]{AKMc06}.
  This concludes the proof of the theorem.
\end{proof}

%% file: sections/poincare-estimates.tex
We show weighted Poincaré inequalities on the cylinder $\mathcal{C} = \mathbb{R}^k \times N$,
where $N$ is a closed, connected, $n$-dimensional Riemannian manifold.
We denote by $\mathrm{d}w$ the weighted measure $w(x,y) \D{x}\D{y}$.

The strategy is to decompose the cylindrical manifold into slender rectangles (slabs),
then prove a weighted Poincaré inequality on each slab and derive a global inequality by patching the slabs together.

\subsection{Poincaré for a slab}
We prove the Poincaré inequality on a single slab of the cylinder.
Let $\{N_i, i = 1,\dots, m\}$ be a finite open covering of the compact base $N$ of the cylinder $\mathcal{C}$.
  A dyadic cube $Q \in \mathcalboondox{D}_t(\mathcal{C})$, for $t > \mathrm{diam}(N) \eqsim 1$ is $Q=R \times N$
  where $R$ is a dyadic cube in $\mathbb{R}^k$.
  Assume that the horizontal component is $R = [0,h]^k$, 
  and so $\ell(Q) = h$.
  We extend the sets $N_i$ in the covering in the horizontal direction,
  considering $Q_i \coloneqq  [0,h]^k \times N_i$.
 
  We rewrite the average of $u$ on $Q$ as
  \begin{equation}\label{eq:average_via_partition_unity}
    \begin{aligned}
    \langle u \rangle_{Q} & = \frac{1}{h^k\lvert N\rvert} \iint_{[0,h]^k\times N}  u(x,y) \D{x}\mathrm{d}y \\
    & \le \sum_{i=1}^m \frac{\lvert [0,h]^k\times N_i \rvert}{h^k \lvert N\rvert} \Big(\frac{1}{\lvert Q_i\rvert} \iint_{Q_i} u \Big) \le \sum_{i=1}^m \langle u \rangle_{Q_i},
    \end{aligned}
  \end{equation}
  since $\lvert Q_i\rvert/\lvert Q \rvert \le 1$.
  Note that, since the family $\{N_i\}_i$ is finite, in particular is finitely overlapping.
  
  \begin{lemma}[Poincaré on a single slab]
    Let $h \ge 1$ and $w \in A_2(\mathcal{C})$.
    Then for all $u \in H^1(\mathcal{C},w)$ we have
    \begin{equation}\label{eq:poincare_local_Ni}
      \fiint_{[0,h]^k \times N_i} \lvert u - \langle u \rangle_{Q_i} \rvert^2 \D{w} \lesssim h^2 \fiint_{[0,h]^k \times N_i} \lvert \nabla_{x,y} u\rvert^2 \D{w},
    \end{equation}
    where $\langle u \rangle_{Q_i}$ is the unweighted average of $u$ on $[0,h]^k\times N_i$.
  \end{lemma}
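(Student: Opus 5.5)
The plan is to reduce the slab inequality to a weighted Poincaré inequality on a Euclidean rectangle. I would choose the covering $\{N_i\}$ so that each $N_i$ is the domain of a coordinate chart $\phi_i \colon N_i \to (0,1)^n$, bi-Lipschitz with constants depending only on $N$. We may assume this, since $N$ is compact and we are free to pick the covering. Then $\Phi_i = \mathrm{id}\times\phi_i$ maps $Q_i=[0,h]^k\times N_i$ bi-Lipschitz onto the rectangle $R_h \coloneqq [0,h]^k\times(0,1)^n$. In the chart, the Riemannian measure $\mathrm{d}x\,\mathrm{d}y$ has a density bounded above and below, and $|\nabla_{x,y}u|$ is comparable to the Euclidean gradient of $u\circ\Phi_i^{-1}$. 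Write $v=u\circ\Phi_i^{-1}$ and $\tilde w = (w\circ\Phi_i^{-1})\cdot J$, where $J$ is the Jacobian density. The claim \eqref{eq:poincare_local_Ni} then becomes
\[
\fiint_{R_h}|v-\langle v\rangle_{R_h}|^2\,\tilde w \lesssim h^2\fiint_{R_h}|\nabla v|^2\,\tilde w .
\]
Here the unweighted average changes only by a bounded density, which is harmless: replacing one constant by another only increases the left side by at most a fixed factor, since the average minimises $\int|v-c|^2$ up to weighted/unweighted comparison. The cleanest way is to use $\inf_c$ on the left.

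Next I need that $\tilde w$ satisfies the $A_2$ condition on subrectangles of $R_h$, uniformly in $h$. For cubes of size $\le 1$ this follows from $w\in A_2(\mathcal{C})$, since geodesic balls in $N_i$ correspond to comparable Euclidean balls and the weighted measure is doubling. For large cubes I do not really need it, because of the rescaling argument described next.

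The key step, and the main obstacle, is that $R_h$ is a slender rectangle with aspect ratio $h\ge1$, so the standard weighted Poincaré inequality on cubes (Fabes–Kenig–Serapioni, valid for $A_2$ weights with constant depending on $[w]_{A_2}$) does not apply directly with the right power of $h$. I would handle this as follows.

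First, extend $\tilde w$ from $R_h$ by even periodic reflection in the $n$ vertical directions, with period $2$. By \cref{prop:periodic_even_ext}, applied to parallelepipeds as remarked there, the extension is an $A_2$ weight on the enlarged set. Reflect $v$ evenly as well; this preserves $H^1$ and the gradient norm up to a factor $2^n$. This embeds the problem into the cube $[0,h]^k\times[0,h']^n$ with $h'\in[h,2h]$ an even integer, so that the cube is a union of reflected copies of $R_h$.

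Apply the classical weighted Poincaré inequality on this cube of side $\eqsim h$. The cube's $A_2$ constant is controlled by $2^{2d}$ times the $A_2$ constant of $\tilde w$ on subrectangles of $R_h$. That in turn needs $A_2$ for rectangles $[a,a+r]^k\times(\text{vertical interval})$ inside $R_h$ with $r$ up to $h$. These are comparable to cylinder cubes $P\times N_i$, and are controlled by the $A_2(\mathcal{C})$ hypothesis applied to large cubes $P\times N$ together with doubling of $w\,\mathrm{d}\mu$ on $N$-slices.

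By periodicity, the left and right sides on the big cube are $\eqsim (h'/1)^n$ times the corresponding integrals over $R_h$. The averages agree up to this factor, and the weighted mass normalisations cancel in the averaged integrals $\fiint$. This yields the slab estimate with constant depending on $[w]_{A_2}$ and $N$.

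I expect the delicate point to be verifying the $A_2$ bound for the thin-in-$y$, long-in-$x$ rectangles. If that fails, the fallback is to use the scalar Poincaré of \cref{lemma:poincare_scalar}'s standard small-scale version together with a chaining argument over unit subcubes of $[0,h]^k$.
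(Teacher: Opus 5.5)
Your proposal is correct and follows the paper's proof essentially step by step: pull $[0,h]^k\times N_i$ back through a chart to $[0,h]^k\times[0,1]^n$, extend the weight and the function evenly and periodically in the $n$ vertical directions via \cref{prop:periodic_even_ext}, apply the Fabes--Kenig--Serapioni inequality on a cube of side $\eqsim h$ (for you, a rectangle of bounded eccentricity), and divide out the $\eqsim h^n$ identical reflected copies. The step you flag as delicate does not arise, because for subcubes of side $\gtrsim 1$ vertical periodicity only brings in sets $P\times[0,1]^n$, i.e.\ $P\times N_i$, and $\fint_{P\times N_i}w\,\mathrm{d}\mu\le\frac{|N|}{|N_i|}\fint_{P\times N}w\,\mathrm{d}\mu$ (likewise for $w^{-1}$); this same $A_2$ bound on the whole slab gives $|\langle u\rangle_{Q_i}-c|^2\,w(Q_i)\lesssim\int_{Q_i}|u-c|^2\,\mathrm{d}w$ for every constant $c$, which is what you actually need (taking $\inf_c$ alone is not enough) to pass to the unweighted average, exactly as the paper does at the end of \cref{subsec:poincare}.
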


  \begin{proof}[Proof of~\eqref{eq:poincare_local_Ni}] %
    The idea is to pull back the inequality on the slab to a parallelepiped in $\mathbb{R}^d$.
    By considering even, periodic extensions of the pulled-back function and weight,
    we extend the inequality to a cube.
    The weighted Poincaré inequality holds on cube,
    and it is nothing but a multiple of the inequality on the parallelepiped, which holds as well.
    We give details below.

    We will pull back to cubes in $\mathbb{R}^n$ via diffeomorphisms
    \begin{equation}
      \label{eq:pullback_to_Euclidean_cubes}
      \varphi_i \colon [0,1]^n \to N_i.
    \end{equation}    
    After a change of variables with the diffeomorphism $\varphi_i$ in \eqref{eq:pullback_to_Euclidean_cubes}
    and renaming the pullback via $\varphi_i$, %
    equation \eqref{eq:poincare_local_Ni} is equivalent to
    \begin{equation}\label{eq:poincare_eucli_rect}
      \iint_{[0,h]^k \times [0,1]^n}  \lvert g - \langle u \rangle_{Q_i} \rvert^2 w_i(x,\upsilon) \mathrm{d}x\mathrm{d}\upsilon \lesssim h^2 \iint_{[0,h]^k \times [0,1]^n} \lvert \nabla_{x,y} g\rvert^2 w_i(x,\upsilon) \mathrm{d}x\mathrm{d}\upsilon
    \end{equation}
    where $g(x,\upsilon) \coloneqq u(x,\varphi_i(\upsilon))$, $w_i(x,\upsilon) \coloneqq w(x,\varphi_i(\upsilon))$.
    The averages $\langle u \rangle_{Q_i}$ can be written in terms of averages of $g$ on $[0,h]^k \times [0,1]^n$ with respect to the weighted measure $J_{\varphi_i} \mathrm{d}x\mathrm{d}\upsilon$ since
    \begin{equation*}
     \iint_{[0,h]^k \times N_i} u(x,y) \mathrm{d}x\mathrm{d}y = \iint_{[0,h]^k \times [0,1]^n} g(x,\upsilon) J_{\varphi_i} \mathrm{d}x\mathrm{d}\upsilon ,
   \end{equation*}
   and we have that $\lvert [0,h]^k \times N_i \rvert = J_{\varphi_i}\big( [0,h]^k \times [0,1]^n \big)$. So we have
   \begin{equation*}
     \langle u \rangle_{Q_i} = \frac{1}{J_{\varphi_i}\big( [0,h]^k \times [0,1]^n \big)} \iint_{[0,h]^k \times [0,1]^n} g(x,\upsilon) J_{\varphi_i} \mathrm{d}x\mathrm{d}\upsilon = \langle g \rangle_{[0,h]^k \times [0,1]^n}
   \end{equation*}
   where the last average is taken with respect to the weighted measure $J_{\varphi_i} \mathrm{d}x\mathrm{d}\upsilon$.
    The Jacobian $J_{\varphi_i}$ is uniformly bounded above and below,
    and the proof of~\cite[Lemma 1.4 and Theorem 1.5]{FabesKenigSerapioni82} work as well for~\eqref{eq:poincare_eucli_rect} when $h = 1$.  
    
    Finally, by adjusting the diffeomorphism $\varphi_i$, without loss of generality
    we can assume $h$ to be an integer.
    The weighted Poincaré inequality holds on the cube $[0,h]^{d}$ for any $A_2$ weight by~\cite[Theorem 1.5]{FabesKenigSerapioni82}.
    The cube $[0,h]^{d}$ is made of $h^n$ copies of the parallelepiped $[0,h]^k \times [0,1]^n$.
    By \cref{prop:periodic_even_ext}, the even, periodic extension in $\mathbb{R}^n$ of $w_i$ is in $A_2$. 
    This implies the weighted Poincaré inequality for the even, periodic extension of $g$ and $w_i$ on the cube,
    and so~\eqref{eq:poincare_eucli_rect}.
\end{proof}

\subsection{Merging slabs} %
\label{subsec:poincare}
We prove  weighted Poincaré inequality on the cylinder $\mathcal{C}$ in \cref{lemma:poincare_scalar}  %
by merging the inequalities obtained on each slab.

\begin{proof}[Proof of \cref{lemma:poincare_scalar}]
  Assume that the horizontal component is $R = [0,h]^k$, and so $Q = R \times N$, with $\ell(Q) \eqsim h$.
  Bound
  \begin{equation*}
    \int_{N} \int_{[0,h]^k} \lvert u(x,y)-\langle u\rangle_{Q}\rvert^2 \D{w} \leq \sum_{i=1}^m \int_{N_i} \int_{[0,h]^k} \lvert u(x,y)-\langle u\rangle_{Q}\rvert^2 \D{w}
  \end{equation*}
  Denote by $\langle u\rangle_{i}$ the average $\langle u\rangle_{Q_i}$ where $Q_i = [0,h]^k \times N_i$.
  The integrand in  each term is controlled by
  \begin{equation*}      
    \lvert u(x,y)-\langle u\rangle_{Q}\rvert^2 \le 2 \big( \lvert u(x,y)- \langle u\rangle_{i}\rvert^2 + \lvert \langle u\rangle_{Q} -\langle u\rangle_{i}\rvert^2 \big).
  \end{equation*}
  The first term is estimated by~\eqref{eq:poincare_local_Ni}.
  For the latter, we control the difference of averages by writing $\langle u \rangle_{Q}$ as in~\eqref{eq:average_via_partition_unity}
  \begin{align*}
    \lvert \langle u\rangle_{Q}-\langle u\rangle_i \rvert^2 &\le \Big( \sum_{j=1}^m \fiint_{Q_j} \lvert u-\langle u\rangle_i \rvert \Big)^2 \lesssim \sum_{j=1}^m \fiint_{Q_j} \lvert u-\langle u\rangle_i \rvert^2 \\
    & \lesssim \sum_{j=1}^m \fiint_{Q_j} \lvert u-\langle u\rangle_j \rvert^2 + \sum_{j=1}^m \lvert \langle u\rangle_j -\langle u\rangle_i \rvert^2.
  \end{align*}
  Again the first term is estimated by~\eqref{eq:poincare_local_Ni}.
  For the latter we distinguish two cases: when $N_j$ intersects $N_i$, and when not.
    If $N_i \cap N_j \neq \emptyset$, since $\{N_j\}_j$ is a finite open covering of $N$, the intersection is an open set.
    By the doubling property,
    $w([0,h]^k \times N_i) \lesssim w([0,h]^k \times (N_i \cap N_j))$ for any $i,j$ for which $N_i \cap N_j$ is not empty.
    By adding and subtracting $u$, we get
    \begin{align*}
      \lvert \langle u\rangle_j-\langle u\rangle_i\rvert^2 & w([0,h]^k \times N_i) \lesssim \iint_{[0,h]^k \times (N_i\cap N_j)} \lvert u - \langle u\rangle_j\rvert^2 + \lvert u -\langle u\rangle_i\rvert^2  \D{w} \\
                                                                           & \le  \iint_{[0,h]^k \times N_j} \lvert u - \langle u\rangle_j\rvert^2 \D{w} + \iint_{[0,h]^k\times N_i} \lvert u -\langle u\rangle_i\rvert^2 \D{w} \\
                            & \lesssim h^2 \lVert \1_{N_i} \nabla u \rVert_{L^2([0,h]^k\times N, w)}^2 + h^2 \lVert \1_{N_j} \nabla u \rVert_{L^2([0,h]^k\times N, w)}^2 \\
                            & \le 2 h^2 \, \lVert \nabla u \rVert_{L^2([0,h]^k\times N, w)}^2 
    \end{align*}
    where we used~\eqref{eq:poincare_local_Ni} twice.
    If $N_i \cap N_j = \emptyset$, then we argue as in the previous case by adding a
    telescoping sum on a finite number of averages on pairwise intersecting slabs $[0,h]^k\times N_j$.
    This is possible since we assume $N$ to be connected.
 
    This completes the proof of the weighted Poincaré inequality with unweighted averages.
    As noted in \cite{FabesKenigSerapioni82}, one can replace the unweighted averages $\langle u \rangle_i$ with weighted averages:
    the difference between the two averages is bounded by the left-hand side in the Poincaré inequality \eqref{eq:poincare_local_Ni}.
    This completes the proof.
  \end{proof}

%% file: sections/singular_integrals.tex
Here we gather proofs of the weighted estimates for singular integral operators on cylinders $\mathbb{R}^k\times N$,
where $N$ is a closed, connected Riemannian manifold.
The cylinder $\mathcal{C} = \mathbb{R}^k \times N$ with the product metric $\mathrm{d}(\cdot,\cdot)$ and measure $\mu$ introduced in \cref{subsec:space_setup} is a space of homogeneous type (SHT).
We show that the Riesz transform $\nabla (-\Delta)^{-1/2}$ and the Beurling transform $\overline{\nabla} \nabla (-\Delta)^{-1}$ on the cylinder $\mathcal{C}$
are Calderón--Zygmund kernels operators on a space of homogeneous type.
Weighted estimates for these operators, even quantitative such, which we shall not need, follow via sparse domination,
for example by applying~\cite[Theorem 6.1]{Lorist}.

  Let $(\mathcal{C},\mathrm{d},\mu)$ be a space of homogeneous type as defined in \cref{subsec:space_setup}.
  Let $B(x,r)$ be the ball of center $x$ and radius $r$ in $\mathcal{C}$.
  Note that for all $x_0 \in \mathcal{C}$ we have
  \begin{equation*}
    \mu\left(B(x_0, r)\right) \eqsim \min(r^k , r^d).
  \end{equation*}
  
  \begin{define}[Calderón--Zygmund operator on $\mathcal{C}$]\label{def:CZ}
    Let $X, Y$ be two Banach spaces.
    We say that $T$ is a Calderón--Zygmund operator on the cylinder $(\mathcal{C},\mathrm{d},\mu)$
    if $T$ is a linear, bounded operator
    $T \colon L^2(\mathcal{C};X) \to L^2(\mathcal{C}, Y)$
    and there exists a map $k \colon \mathcal{C} \times \mathcal{C} \setminus \{(x,x) \,\colon\, x\in\mathcal{C} \} \to \mathscr{L}(X,Y)$ such that
    \begin{equation*}
      T f(x) = \int_{\mathcal{C}} k(x,y) f(y) \mathrm{d}\mu(y)
    \end{equation*}
    for all $x \not\in \supp f$, and
    the map $k$ is a Calderón--Zygmund kernel,
    meaning that there exists a finite constant $c_T > 0$ such that $k$ satisfies the size and regularity estimates
    \begin{align}
      \lVert k(x,y) \rVert & \lesssim c_T \min(r^k , r^d)^{-1}, \qquad \text{and}\label{eq:kernel_estimate_CZ_SHT} \\
      \lVert \nabla_x k(x,y) \lVert + \lVert \nabla_y k(x,y) \lVert  & \lesssim c_T r^{-1} \min(r^k , r^d)^{-1} \label{eq:regularity_kernel_estimate_CZ_SHT}.
    \end{align}
    where $r \coloneqq d(x,y)$.
    The implicit constants depend only on $\mathcal{C}, \mathrm{d}$, and the measure $\mu$.
\end{define}

Thus, to prove weighted estimates for an operator $T$ it is enough to check:
\begin{enumerate}
\item the unweighted $L^2$-boundedness of $T$; %
\item the size estimate \eqref{eq:kernel_estimate_CZ_SHT}; %
\item the regularity estimate \eqref{eq:regularity_kernel_estimate_CZ_SHT}. %
\end{enumerate}

\begin{remark}
  The conditions \eqref{eq:kernel_estimate_CZ_SHT} and \eqref{eq:regularity_kernel_estimate_CZ_SHT} are technically stronger than the one in \cite[Theorem 6.1]{Lorist},
  where the control of $\nabla k$ is relaxed to a Dini condition.
  Our conditions above imply the Dini condition in \cite[\S 6]{Lorist} for Dini kernel with $\omega(t) = t$.  
\end{remark}
The Calderón--Zygmund kernels for our singular integral operators are obtained using representation formulas with the heat kernel,
and estimates \eqref{eq:kernel_estimate_CZ_SHT} and \eqref{eq:regularity_kernel_estimate_CZ_SHT} are deduced from estimates for the heat kernel on $\mathcal{C}$.

\subsection{Heat kernel estimates}\label{subsec:heat_kernel_estimates}

\input{sections/heat_kernel}

\subsection{Riesz transform bounds}
\label{subsec:Riesz_transform_bounds}

The Riesz transform bounds on cylinders follow by combining the heat kernel estimates from
\cref{lemma:pointwise_bounds_heat_kernel}, 
the unweighted estimates from~\cite[Theorem 1.4]{auscher2004riesz}
and the weighted estimates from~\cite[Corollary 1.3]{AuscherMartellIV}.
We include a proof below for completeness.

\begin{proof}[Proof of \cref{lemma:Riesz_bounds}] %
  The Riesz transform $\mathcal{R} \coloneqq \nabla (-\Delta)^{1/2}$ is an isometry on unweighted $L^2$, so it suffices to show the size and regularity estimates for the kernel of $\mathcal{R}$. 
  To do so, we write the Riesz transform in terms of the heat propagator $e^{t^2 \Delta}$ as
  \begin{align*}
    \nabla (-\Delta)^{-1/2} & = \frac{2}{\sqrt{\pi}} \int_0^\infty \nabla e^{t^2 \Delta} \D{t}.
  \end{align*}
  To show that this is a Calderón--Zygmund operator we bound the kernel
  \begin{equation*}
    k(x,y) = \int_0^\infty \nabla p(t^2,x,y) \D{t}. %
  \end{equation*}
  Let $r \coloneqq d(x,y)$ be the distance on the cylinder.
  We proceed using the estimates for the heat kernel of $e^{t^2 \Delta}$ and its derivatives as in \cref{lemma:pointwise_bounds_heat_kernel},
  and the change of variables $s = r/t$.
  \begin{align*}
    \int_0^\infty \lvert \nabla p(t^2,x,y)\rvert \mathrm{d}t & \lesssim \int_0^\infty \frac{\exp\big(-\frac{r^2}{ct^2} \big)}{\min(t^d,t^k)} \big( \tfrac{r}{t} + 1 \big) \frac{\mathrm{d}t}{t} \\
                                & = \int_0^\infty e^{-s^2/c} \max\Big(\Big(\frac{s}{r}\Big)^d,\Big(\frac{s}{r}\Big)^k\Big) ( s + 1 ) \frac{\mathrm{d}s}{s} \\
                                & \lesssim \max(r^{-d},r^{-k}) \eqsim \mu\big(B(x, r)\big)^{-1} .
  \end{align*}
  This proves the size estimate. For the regularity estimate \eqref{eq:regularity_kernel_estimate_CZ_SHT}, we bound
  \begin{align*}
    \int_0^\infty \lvert \overline{\nabla}^2 p(t^2,x,y)\rvert \mathrm{d}t & \lesssim \int_0^\infty \frac{\exp\big(-\frac{r^2}{ct^2} \big)}{\min(t^d,t^k)} \frac{1}{t} (\tfrac{r^2}{t^2}+1) \frac{\mathrm{d}t}{t} \\
                                 & = \int_0^\infty e^{-s^2/c} \max\Big(\Big(\frac{s}{r}\Big)^d,\Big(\frac{s}{r}\Big)^k\Big) \frac{s}{r} (s^2+1) \frac{\mathrm{d}s}{s} \\
                                 & \lesssim r^{-1} \max(r^{-d},r^{-k}) \eqsim r^{-1} \mu\big(B(x, r)\big)^{-1} .
  \end{align*}

  This proves that the Riesz transform $\nabla (-\Delta)^{-1/2}$ is a Calderón--Zygmund operator on a space of homogeneous type $(\mathcal{C}, d(\cdot,\cdot), \mu)$.
  Then~\cite[Theorem 6.1]{Lorist} %
  gives
  \begin{equation*}
    \lVert \nabla (-\Delta)^{-1/2} u \rVert_{L^2(\mathcal{C},w)} \lesssim [w]_{A_2} \lVert u \rVert_{L^2(\mathcal{C},w)}
  \end{equation*}
  for $w \in A_2(\mathcal{C})$.    
  The reverse inequality follows by duality
  after noting that the adjoint operator $\mathcal{R}^\star$ with respect to the unweighted $L^2$ pairing
   maps $L^2(\mathcal{C};T\mathcal{C},w^{-1})$ to $L^2(\mathcal{C},w^{-1})$, for $w^{-1} \in A_2$.
  Using the identity $u = \mathcal{R}^\star \mathcal{R} u$, we have
  \begin{equation*}
    \lVert u \rVert_{L^2(w)} = \lVert \mathcal{R}^\star \mathcal{R} u \rVert_{L^2(w)} \lesssim [w]_{A_2} \lVert \mathcal{R} u \rVert_{L^2(w)},
  \end{equation*}
  which concludes the proof.
\end{proof}

\subsection{Square function estimates}\label{subsec:square_function_estimates}

\input{sections/square_function}

\subsection{Beurling transform bounds}\label{subsec:weighted_Riesz_Beurling}
\input{sections/Beurling}

%% file: sections/heat_kernel.tex
We collect known estimates for the heat kernel on manifolds that we need.
On the Euclidean space $\mathbb{R}^k$ the heat kernel is the Gaussian
\begin{equation*}
  p_{\mathbb{R}^{k}}(t, x, y) = \frac{1}{(4 \pi t)^{k/2}} \exp\Big( - \frac{\lvert x - y\rvert^2}{4 t} \Big).
\end{equation*}
On a closed Riemannian manifold $N$, with Riemannian volume $\mu$ and metric $d_N(\cdot,\cdot)$,
the heat kernel $p_N(t,x,y)$ can be estimated as follows. 
\begin{lemma}[{\cite[Theorem 3.1]{MR2648271}}]\label{thm:LiYau_estimates}
  The heat kernel on a closed manifold $N$ satisfies
  \begin{equation}\label{eq:LiYau_estimates}
    p_N(t,x,y) \lesssim \frac{1}{\mu(B(x,\sqrt{t}))} \exp\Big( - \frac{d_N(x,y)^2}{ct} \Big)
  \end{equation}
  for some constant $c > 0$ and for all times $t>0$.
  The same Gaussian lower bound of $p_N$, but with a smaller $c>0$, also holds. In particular, 
  \begin{equation*}
    \inf_{x,y\in N, t>1} p_N(t,x,y) > 0 \quad \text{ and } \quad \sup_{x,y\in N, t>1} p_N(t,x,y) < \infty.
  \end{equation*}
\end{lemma}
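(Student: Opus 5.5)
The heat kernel on the closed manifold $N$ has Gaussian upper and lower bounds, and both become uniform constants for $t>1$. We take the upper bound \eqref{eq:LiYau_estimates} as the cited Li--Yau type estimate for compact manifolds. Compactness gives nonnegative Ricci curvature up to a constant, $\mathrm{Ric}_N \ge -K$, and Li--Yau applies for bounded times with constants depending on $K$ and $\mathrm{diam}(N)$. For large times the estimate is upgraded by spectral theory, as follows.

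\textbf{Small times $0<t\le 1$.} Use the Li--Yau Harnack inequality for $\mathrm{Ric}\ge -K$. Volume doubling on balls of radius at most $\mathrm{diam}(N)$ holds by Bishop--Gromov. Together these give two-sided Gaussian bounds $p_N(t,x,y)\eqsim \mu(B(x,\sqrt t))^{-1}e^{-d_N(x,y)^2/(ct)}$, where the constant $c$ in the lower bound is larger, so the exponent is more negative. On a compact manifold the factor $e^{Kt}$ in Li--Yau is bounded for $t\le1$.

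\textbf{Large times $t>1$.} Expand $p_N(t,x,y)=\sum_j e^{-\lambda_j t}\phi_j(x)\phi_j(y)$ in an orthonormal eigenbasis. The ground state is $\lambda_0=0$ with $\phi_0=\mu(N)^{-1/2}$, since $N$ is connected, and there is a spectral gap $\lambda_1>0$. By Cauchy--Schwarz and the semigroup property,
\[
\bigl|p_N(t,x,y)-\mu(N)^{-1}\bigr| \le e^{-\lambda_1(t-1)}\, p_N(1,x,x)^{1/2}p_N(1,y,y)^{1/2} \lesssim e^{-\lambda_1 (t-1)}.
\]
Hence $p_N(t,\cdot,\cdot)$ is bounded above uniformly for $t>1$. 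For the lower bound, apply the semigroup identity $p_N(t,x,y)=\int p_N(t-1,x,z)\,p_N(1,z,y)\,\mathrm{d}z$ with $p_N(t-1,x,\cdot)\ge0$ of total mass $1$, together with the small-time lower bound at $t=1$. Since $d_N\le\mathrm{diam}(N)$, this gives $p_N(1,z,y)\gtrsim 1$, so $\inf_{t>1}p_N\gtrsim1$. Finally, for $t>1$ we have $\mu(B(x,\sqrt t))=\mu(N)\eqsim1$ once $\sqrt t\ge\mathrm{diam}(N)$, and $\eqsim 1$ by doubling otherwise. The Gaussian factor is also bounded between $e^{-\mathrm{diam}(N)^2/c}$ and $1$. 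So the two-sided Gaussian form holds for all $t>0$.

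\textbf{Main obstacle.} The only non-elementary input is the small-time lower bound, which needs the full Li--Yau Harnack inequality. I would cite it rather than reprove it. Everything else is routine spectral theory.
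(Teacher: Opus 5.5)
Your argument is correct, but it takes a different route from the paper. The paper gives no proof of this lemma. It quotes the two-sided Gaussian bound for all $t>0$ from \cite[Theorem 3.1]{MR2648271}, and the ``in particular'' clause then follows because $\mu(B(x,\sqrt t))\eqsim 1$ and $d_N\le\mathrm{diam}(N)$ when $t>1$.

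You instead use the Li--Yau estimates only for $0<t\le 1$. On that range the curvature factors $e^{\pm CKt}$ coming from $\mathrm{Ric}_N\ge -K$ are harmless. You then treat $t>1$ separately:
\begin{itemize}
\item the upper bound comes from the eigenfunction expansion with spectral gap, where Cauchy--Schwarz and $p_N(1,x,x)\lesssim 1$ give $\lvert p_N(t,x,y)-\mu(N)^{-1}\rvert\lesssim e^{-\lambda_1(t-1)}$;
\item the lower bound comes from Chapman--Kolmogorov, conservation of mass and the lower bound at $t=1$.
\end{itemize}

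What your route buys is a self-contained treatment of the large-time regime. On a closed manifold with some negative curvature, this is exactly where a direct appeal to the Ricci-lower-bound version of Li--Yau would carry an exponentially growing (resp.\ decaying) factor. You also get the extra information that $p_N(t,\cdot,\cdot)\to\mu(N)^{-1}$ exponentially fast. This is the same spectral mechanism the paper uses right afterwards in \eqref{eq:expansion_heat_kernel_large_time} for the large-time derivative bounds. The citation route is shorter, but it leaves the uniformity in $t$ to the reference.

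One small slip: in the normalisation $\exp(-d_N(x,y)^2/(ct))$, the lower bound needs a \emph{smaller} $c$, as the statement says, to make the exponent more negative. You wrote ``larger''. This does not affect the argument.
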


Estimates for the $m$\textsuperscript{th} covariant derivative of $p_N(t,x,y)$ for short times have been proved by Hsu:
\begin{lemma}[{\cite[Corollary 1.2]{Hsu99}}]
  Let $N$ be a closed Riemannian manifold with distance $d_N(\cdot,\cdot)$.
  Then 
  \begin{equation}\label{eq:Hsu_est}
    \left|\overline{\nabla}^m p_N(t, x, y)\right| \lesssim_m \left( \frac{d_N(x, y)}{t}+\frac{1}{\sqrt{t}}\right)^m p_N(t, x, y) 
  \end{equation}
  for $(t, x, y) \in(0,1] \times N \times N$, and all $m \in \mathbb{N}$.
\end{lemma}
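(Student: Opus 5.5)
The plan is to reduce \eqref{eq:Hsu_est} to an estimate for the logarithmic derivatives of $p_N$. Write $\ell(t,x,y)\coloneqq \log p_N(t,x,y)$, which is well defined because $p_N>0$. By the Faà di Bruno formula, $\overline{\nabla}^m p_N/p_N$ is a universal polynomial in $\overline{\nabla}\ell,\dots,\overline{\nabla}^m\ell$ that is homogeneous of total order $m$. It therefore suffices to prove, for $(t,x,y)\in(0,1]\times N\times N$,
\begin{equation*}
  \lvert \overline{\nabla}^j \ell(t,x,y)\rvert \lesssim_j \Big(\frac{d_N(x,y)}{t}+\frac{1}{\sqrt t}\Big)^{j} \quad\text{for } j=1,\dots,m ,
\end{equation*}
and in fact the weaker bound $\frac{d_N}{t}\cdot(\ldots)^{j-1}+t^{-j/2}$ for $j\ge2$ is enough. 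I would split into the near-diagonal region $d_N(x,y)<\mathrm{inj}(N)/2$ and the far region.

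\emph{Near the diagonal.} Use the Minakshisundaram--Pleijel parametrix. In normal coordinates it gives
\begin{equation*}
  p_N(t,x,y)=(4\pi t)^{-n/2}e^{-d_N^2/4t}\Big(\sum_{i=0}^{K}t^i u_i(x,y)+O_{C^m}(t^{K+1})\Big),
\end{equation*}
where the $u_i$ are smooth, $u_0>0$ is bounded below uniformly by compactness, and the remainder is controlled in $C^m$. Then $\ell=-d_N^2/4t+\log(\text{smooth positive})+O(1)$. The smooth factor has bounded derivatives of every order. The function $d_N^2$ is smooth in this region, with $\lvert\overline\nabla d_N^2\rvert\lesssim d_N$ and bounded higher derivatives. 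Differentiating gives $\lvert\overline\nabla^j\ell\rvert\lesssim d_N/t+t^{-1}+1$, which is better than needed. The one point to check is that the remainder, after division by the leading term, stays controlled. This holds because $e^{-d_N^2/4t}\gtrsim e^{-c/t}$ while the remainder is $O(t^{K+1})$ with all derivatives. I would choose $K$ large, or use Varadhan--Molchanov type remainder estimates relative to $e^{-d_N^2/4t}$.

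\emph{Away from the diagonal.} This is the main obstacle. Here $p_N$ is exponentially small, the cut locus may be crossed, and a parametrix is no longer available. I would follow the probabilistic route behind \cite{Hsu99}, using a Bismut-type formula for the Brownian bridge $X^{t}$ from $x$ to $y$ in time $t$:
\begin{equation*}
  \overline{\nabla}_x \ell(t,x,y)=\frac1t\,\mathbb{E}\Big[\int_0^t (\text{parallel-transported, Ricci-corrected}) \,\mathrm{d}b_s \,\Big|\, X_t=y\Big].
\end{equation*}
Higher derivatives come from iterated Malliavin integration by parts, which produce polynomial functionals of the anti-development of the bridge. The key input is then the uniform moment bound $\mathbb{E}[\sup_{s\le t}\lvert b^{\mathrm{bridge}}_s\rvert^{q}]^{1/q}\lesssim_q d_N(x,y)+\sqrt t$ for bridges on a compact manifold, uniformly in $t\le1$. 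This is where the real work lies. I would prove it by comparing the bridge drift $\overline\nabla\log p_N(t-s,\cdot,y)$ with the Euclidean drift $-\,d_N\overline\nabla d_N/(t-s)$, using a Gaussian upper and lower bound together with a time-reversal argument to handle the endpoint singularity. Feeding these moments into the integration-by-parts formulas yields $\lvert\overline\nabla^j\ell\rvert\lesssim(d_N/t+t^{-1/2})^j$ in the far region too. Combined with the first step and Faà di Bruno, this gives \eqref{eq:Hsu_est}.
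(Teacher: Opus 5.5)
The paper gives no proof of this lemma; it simply quotes Hsu. Your Fa\`a di Bruno reduction is correct: every monomial in $\overline{\nabla}^m p_N/p_N$ has total order $m$ in the $\overline{\nabla}^j\log p_N$, so logarithmic bounds imply the stated one. The trouble is that neither half of your argument for the logarithmic bounds works as written.

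\emph{Near the diagonal.} The Minakshisundaram--Pleijel parametrix does not give a remainder relative to the Gaussian. It gives $p_N=(4\pi t)^{-n/2}e^{-d_N^2/4t}\sum_{i\le K}t^iu_i+R_K$ with an \emph{additive} error $R_K=O(t^{K+1-n/2})$, losing $t^{-1/2}$ per derivative. After dividing by the leading term, the error has size $t^{K+1}e^{d_N^2/4t}$. Your inequality $e^{-d_N^2/4t}\gtrsim e^{-c/t}$ points the wrong way here, since $t^{K+1}e^{c/t}\to\infty$ for every $K$. So the parametrix controls $\log p_N$ only where $d_N^2\lesssim_K t\log(1/t)$. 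To reach all of $d_N<\mathrm{inj}(N)/2$ you would need a differentiable relative expansion off the cut locus (Molchanov, Ben Arous), which is a result of the same depth as the lemma.

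\emph{Away from the diagonal.} Here the plan is not yet a proof. The bridge moment bound you single out carries all the content of the lemma. The route you propose for it is circular. Comparing the drift $\overline{\nabla}\log p_N(t-s,\cdot,y)$ with $-d_N\overline{\nabla}d_N/(t-s)$ needs pointwise control of $\overline{\nabla}\log p_N$, which is exactly the case $j=1$ you are trying to prove. In addition, $\overline{\nabla}d_N^2$ is discontinuous on the cut locus of $y$, which the bridge may visit. Two-sided Gaussian bounds with different constants determine $p_N$ only up to a factor $e^{Cd_N^2/t}$ and say nothing about its gradient. Integrating by parts under the bridge measure would likewise produce derivatives of that same drift.

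One way to close the gap, which also removes the need for the near/far split and for Fa\`a di Bruno, is to avoid the bridge drift entirely. By the Markov property and a Bismut-type formula on $[0,t/2]$ for Brownian motion $X$ started at $x$,
\begin{equation*}
  \frac{\overline{\nabla}^m_x p_N(t,x,y)}{p_N(t,x,y)}=\mathbb{E}_x\big[\Phi_m Z\big],\qquad Z\coloneqq\frac{p_N(t/2,X_{t/2},y)}{p_N(t,x,y)} .
\end{equation*}
Here $\mathbb{E}_xZ=1$, and for $t\le1$ the on-diagonal upper bound and the Li--Yau lower bound give $\lVert Z\rVert_\infty\lesssim e^{Cd_N^2/t}$.

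The key step is to tune the Hölder exponent to the distance. Take $q=1+d_N^2/t$ and $1/q+1/q'=1$. Then $\lVert Z\rVert_{L^{q'}}\le\lVert Z\rVert_\infty^{1/q}\lesssim 1$. A fixed exponent, such as Cauchy--Schwarz, would instead lose a factor $e^{cd_N^2/t}$, already in $\mathbb{R}^n$.

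With this choice everything reduces to showing $\lVert\Phi_m\rVert_{L^q}\lesssim_m(q/t)^{m/2}\eqsim(d_N/t+t^{-1/2})^m$. For $m=1$, $\Phi_1$ is $2/t$ times a stochastic integral over $[0,t/2]$ of the damped parallel transport. The bound then follows from Burkholder--Davis--Gundy, whose constant is $\lesssim\sqrt{q}$. For $m\ge2$, the real work is to construct higher-order weights with the same $q^{m/2}$ growth. Polynomials in stochastic integrals with pathwise bounded integrands would have this property.
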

For large times ($t \ge 1$), we have the following
\begin{lemma}
  Let $N$ be a closed  Riemannian manifold.
  Then %
  \begin{equation}\label{eq:p_N_for_large_t}
    \left|\overline{\nabla}^m p_N\left(t, x, y \right)\right| \lesssim_m e^{-\lambda_2 t /2}
    \eqsim e^{-\lambda_2 t /2} p_N(t,x,y)
  \end{equation}
  for $(t, x, y) \in(1,\infty) \times N \times N$, and all $m \in \mathbb{N}$, $m\ge 1$,
  where $\lambda_2>0$ is the first non-zero eigenvalue of $-\Delta$ on $N$.
\end{lemma}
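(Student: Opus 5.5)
The plan is to use the spectral decomposition of $-\Delta_N$ on the closed manifold $N$. Let $0=\lambda_1<\lambda_2\le\lambda_3\le\dots$ be the eigenvalues of $-\Delta_N$, with $L^2(N)$-orthonormal real eigenfunctions $\phi_j$, and $\phi_1=|N|^{-1/2}$ constant. Then
\begin{equation*}
  p_N(t,x,y)=\sum_{j\ge1} e^{-\lambda_j t}\phi_j(x)\phi_j(y)
  =\frac{1}{|N|}+\sum_{j\ge2} e^{-\lambda_j t}\phi_j(x)\phi_j(y).
\end{equation*}
Since $m\ge1$, every covariant derivative $\overline{\nabla}^m$, in $x$ or $y$ or both, kills the constant term. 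So only the tail $\sum_{j\ge2}$ remains, and each of its terms carries the factor $e^{-\lambda_j t}\le e^{-\lambda_2 t/2}e^{-\lambda_j t/2}$ for $t\ge1$.

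For convergence and uniformity I would factor through the semigroup. Write $t=1/2+(t-1)+1/2$ and set $\tilde p(t,\cdot,\cdot)=p_N(t,\cdot,\cdot)-|N|^{-1}$. This is the kernel of $e^{t\Delta_N}(I-\Pi_0)$, where $\Pi_0$ is the projection onto constants. It factors as
\begin{equation*}
  e^{\frac12\Delta_N}\,\big[e^{(t-1)\Delta_N}(I-\Pi_0)\big]\,e^{\frac12\Delta_N},
\end{equation*}
so that
\begin{equation*}
  \tilde p(t,x,y)=\int_N\int_N p_N(\tfrac12,x,z)\,K_{t-1}(z,z')\,p_N(\tfrac12,z',y)\,dz\,dz',
\end{equation*}
where $K_{t-1}$ is the kernel of $e^{(t-1)\Delta_N}(I-\Pi_0)$. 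Differentiating under the integral,
\begin{equation*}
  |\overline{\nabla}^m \tilde p(t,x,y)|\le \big\|\overline{\nabla}^{a}_x p_N(\tfrac12,x,\cdot)\big\|_{L^2(N)}\,\big\|e^{(t-1)\Delta_N}(I-\Pi_0)\big\|_{L^2\to L^2}\,\big\|\overline{\nabla}^{b}_y p_N(\tfrac12,\cdot,y)\big\|_{L^2(N)},
\end{equation*}
with $a+b=m$. The middle operator norm equals $e^{-\lambda_2(t-1)}$ by the spectral theorem. The two outer factors are finite and uniform in $x,y$: they involve a fixed time $1/2$, so Hsu's bound \eqref{eq:Hsu_est} applies, together with $p_N(1/2,\cdot,\cdot)\lesssim1$ from \eqref{eq:LiYau_estimates} and compactness of $N$.

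Multiplying these bounds gives
\begin{equation*}
  |\overline{\nabla}^m p_N(t,x,y)|\lesssim_m e^{\lambda_2}e^{-\lambda_2 t}\le C_m e^{-\lambda_2 t/2},
\end{equation*}
which is in fact stronger than claimed. The equivalence $e^{-\lambda_2 t/2}\eqsim e^{-\lambda_2 t/2}p_N(t,x,y)$ for $t>1$ is exactly the two-sided bound $\inf_{t>1}p_N>0$ and $\sup_{t>1}p_N<\infty$ from \cref{thm:LiYau_estimates}.

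The main technical point is justifying differentiation of the kernel composition, that is, the $L^2$ bounds on the derivatives $\overline{\nabla}^a_x p_N(1/2,x,\cdot)$ uniformly in $x$. This follows from smoothness of the heat kernel at positive time on a compact manifold, or quantitatively from \eqref{eq:Hsu_est} at $t=1/2$, where $d_N$ is bounded. Mixed derivatives in $x$ and $y$ are handled by distributing them to the two outer factors as above.
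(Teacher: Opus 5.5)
Your proof is correct, but it takes a different route from the paper's. Both arguments use the eigenfunction expansion to see that $m\ge 1$ derivatives annihilate the constant term $1/\mu(N)$.

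The paper then estimates the tail $\sum_{k\ge 2} e^{-\lambda_k t}\phi_k(x)\phi_k(y)$ term by term. It bounds $\lVert \overline{\nabla}^m \phi_k\rVert_{L^\infty(N)}$ polynomially in $\lambda_k$ via Sobolev embedding in charts, and then invokes Weyl's law to sum $\sum_k e^{-t(\lambda_k-\lambda_2/2)}\lambda_k^{\beta}$ uniformly in $t\ge 1$.

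You instead factor
\[
e^{t\Delta_N}(I-\Pi_0)=e^{\frac12\Delta_N}\,\big[e^{(t-1)\Delta_N}(I-\Pi_0)\big]\,e^{\frac12\Delta_N}.
\]
All pointwise and derivative information is then carried by the fixed-time kernel $p_N(\tfrac12,\cdot,\cdot)$, which is smooth on the compact set $N\times N$. The decay in $t$ comes only from the $L^2$ spectral gap $\lVert e^{s\Delta_N}(I-\Pi_0)\rVert = e^{-\lambda_2 s}$.

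What this buys you:
\begin{itemize}
\item It avoids uniform $L^\infty$ bounds on eigenfunctions and eigenvalue asymptotics altogether.
\item It gives the rate $e^{-\lambda_2 t}$ directly, which is stronger than stated.
\item It handles mixed $x$/$y$ derivatives by placing them on different outer factors, so only one-variable derivative bounds of the time-$\tfrac12$ kernel are needed.
\end{itemize}
The paper's series argument is more explicit, at the cost of these extra standard inputs.

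Two small points on presentation. First, your key step is cleaner without the intermediate kernel $K_{t-1}$ and the justification of differentiating a double integral. Write
\[
\overline{\nabla}^a_x\overline{\nabla}^b_y\, p_N(t,x,y)=\big( e^{(t-1)\Delta_N}(I-\Pi_0)\,\overline{\nabla}^b_y p_N(\tfrac12,\cdot,y),\ \overline{\nabla}^a_x p_N(\tfrac12,x,\cdot)\big)_{L^2(N)},
\]
and Cauchy--Schwarz is then immediate. Second, the simplicity of $\lambda_1=0$, and hence $\Pi_0$ being the projection onto constants, uses connectedness of $N$, which the paper assumes throughout.
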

\begin{proof}
  Bound \eqref{eq:p_N_for_large_t} follows by spectral decomposition:
  there is an orthonormal basis of $L^2(N)$ of eigenfunctions of the Laplacian $\{\phi_k\}_{k \ge 1}$ associated with eigenvalues $0 \le \lambda_1 \leq \lambda_2 \leq \ldots \le \lambda_k \rightarrow \infty$ as $k \to \infty$.
  Then we can write
  \begin{equation}\label{eq:expansion_heat_kernel_large_time}
    p_N\left(t, x, y \right) = \frac{1}{\mu(N)}+\sum_{k=2}^{\infty} e^{-\lambda_k t} \phi_k(x) \phi_k(y).
  \end{equation}
  See \cite[Eq. (2.14)]{zbMATH01368940}. %
  We estimate $\lvert \overline{\nabla}^mp_N \rvert$ by using that $\lVert \partial^m \phi_k \rVert_{L^\infty(N)}$ grows polynomially in $k$, for any $m \in\mathbb{N}$.
  Indeed, let $s > \frac{n}2$, then by Sobolev embedding $H^{s}(\mathbb{R}^n) \hookrightarrow L^\infty(\mathbb{R}^n)$ and using partition of unity, we have
  \begin{equation*}
    \lVert \phi_k \rVert_{H^{2m}}^2 = \lVert (I + (-\Delta)^m) \phi_k \rVert_{L^2}^2 \eqsim \lVert (I + \lambda_k^m) \phi_k \rVert_{L^2}^2 \lesssim \lambda_k^m \lVert \phi_k \rVert_{L^2}^2= \lambda_k^m.
  \end{equation*}
  Thus,
  \begin{equation*}
    \lVert \partial^m \phi_k \rVert_{L^\infty(N)} \lesssim \lVert \partial^m \phi_k \rVert_{H^{\frac{n+1}2}(N)} \lesssim \lVert \phi_k \rVert_{H^{\frac{n+1}2 + m}(N)} \lesssim \lambda_k^{\frac{n+1 + 2m}{4}}.
  \end{equation*}
  Using the bound above in \eqref{eq:expansion_heat_kernel_large_time} we have
  \begin{align*}
    \left|\overline{\nabla}^m p_N\left(t, x, y \right)\right| &\lesssim \sum_{k=2}^{\infty} e^{-\lambda_k t} \lVert \overline{\nabla}^m \phi_k \rVert_{L^\infty} \lVert \phi_k \rVert_{L^\infty} \\
                              & \lesssim e^{-t \lambda_2/2} \sum_{k=2}^{\infty} e^{-t (\lambda_k - \lambda_2/2)} \lambda_k^\beta,
  \end{align*}
  for some $\beta <\infty$. Since the increasing sequence of eigenvalues $\lambda_k \to \infty$ with speed according to Weyl's law, the series above converges, giving the stated bound.
\end{proof}

Let $p(t,x,y)$ be the heat kernel on the cylinder $\mathcal{C} = \mathbb{R}^k \times N$.
For $x \in \mathcal{C}$ we write $x = (x',x'')$ with $x' \in \mathbb{R}^k$ and $x'' \in N$.
Let $p_N(t,x'',y'')$ be the heat kernel on the closed manifold $N$.
Then
\begin{equation}\label{eq:product_structure_kernel}
  p(t,x,y) = \frac{1}{(4 \pi t)^{k/2}} \exp\Big(- \frac{\lvert x' - y'\rvert^2}{4t}\Big) \cdot p_N(t,x'',y''),
\end{equation}
since $\Delta_{\mathcal{C}}= \Delta_{\mathbb{R}^k}+ \Delta_N$, and $\Delta_{\mathbb{R}^k}$ and $\Delta_N$ commute.

\begin{lemma}[Pointwise Heat kernel estimates on cylinders]\label{lemma:pointwise_bounds_heat_kernel}
  Let $p(t,x,y)$ be the heat kernel on the cylinder $\mathcal{C} = \mathbb{R}^k \times N$ of dimension $d = k + n$
  with geodesic distance $d(x,y)$ and doubling measure $\mu$.
  Then the heat semigroup $e^{t\Delta}$ on the cylinder $\mathcal{C}$ has Gaussian upper bounds
  \begin{equation*}
    p(t^2,x,y) \lesssim p_0(t^2,r) \coloneqq \frac{\exp\big(-\frac{r^2}{c t^2} \big)}{\min(t^d,t^k)},
  \end{equation*}
   for all $t >0$ and $x,y\in \mathcal{C}$.
   Here $c>0$ is a constant, and we write $r \coloneqq d(x,y)$.
  Moreover, the estimates
  \begin{align}
    \lvert \nabla p(t^2,x,y) \rvert & \lesssim p_0(t^2,r) \tfrac{1}{t} \big( \tfrac{r}{t} + 1 \big),  \label{eq:est_nabla_p}\\
    \lvert \overline{\nabla}^2 p(t^2,x,y) \rvert & \lesssim p_0(t^2,r) \tfrac{1}{t^2} \big( \tfrac{r^2}{t^2} + 1 \big), \label{eq:est_nabla2_p} \\
    \lvert \overline{\nabla}^3 p(t^2,x,y) \rvert & \lesssim p_0(t^2,r) \tfrac{1}{t^3} \big( \tfrac{r^3}{t^3} + 1 \big)
  \end{align}
  hold for all $t > 0$.
\end{lemma}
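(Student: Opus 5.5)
The plan is to exploit the product structure \eqref{eq:product_structure_kernel}, $p = p_{\mathbb{R}^k}\cdot p_N$, and to estimate each factor and its derivatives separately, treating short times $t\le 1$ and long times $t>1$ differently. Throughout, $r^2 = \lvert x'-y'\rvert^2 + d_N(x'',y'')^2$, so $r\eqsim \lvert x'-y'\rvert + d_N(x'',y'')$. Since $N$ is compact, $d_N\le \mathrm{diam}(N)\eqsim 1$.

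\textbf{Gaussian upper bound.} For $t\le 1$, \cref{thm:LiYau_estimates} together with $\mu_N(B(x'',t))\eqsim t^n$ gives $p_N(t^2,x'',y'')\lesssim t^{-n}\exp(-d_N^2/(ct^2))$. Multiplying by the Euclidean Gaussian $t^{-k}\exp(-\lvert x'-y'\rvert^2/(4t^2))$ yields $t^{-d}\exp(-r^2/(ct^2))$, and for $t\le 1$ we have $t^{-d}=\min(t^d,t^k)^{-1}$. For $t>1$, \cref{thm:LiYau_estimates} gives $p_N\lesssim 1$, and since $d_N\lesssim 1<t$ we have $\exp(-d_N^2/(ct^2))\gtrsim 1$. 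Hence $p\lesssim t^{-k}\exp(-\lvert x'-y'\rvert^2/(4t^2))\lesssim t^{-k}\exp(-r^2/(ct^2))$, after adjusting $c$.

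\textbf{Derivatives.} Because $\mathcal{C}$ carries the product metric, the Levi-Civita connection splits, and $\overline{\nabla}^m p$ is a sum over $j\le m$ of the terms $\partial_{x'}^{\,j}p_{\mathbb{R}^k}\otimes\overline{\nabla}^{m-j}_{y}p_N$, with no mixed curvature terms. For the Euclidean factor the bound is explicit: $\lvert\partial^j p_{\mathbb{R}^k}(t^2)\rvert\lesssim t^{-j}(\lvert x'-y'\rvert/t+1)^j\, p_{\mathbb{R}^k}(t^2)$. For the $N$ factor with $t\le1$, Hsu's estimate \eqref{eq:Hsu_est} at time $t^2$ gives the factor $(d_N/t^2+1/t)^{m-j}=t^{-(m-j)}(d_N/t+1)^{m-j}$. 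For $t>1$, \eqref{eq:p_N_for_large_t} gives $\lvert\overline{\nabla}^{i}p_N\rvert\lesssim e^{-\lambda_2 t^2/2}$ when $i\ge1$, and this is $\lesssim t^{-i}$ times a bounded quantity. When $i=0$ the previous step applies. In every case, multiplying the two factors and using $(a+1)^j(b+1)^{m-j}\lesssim (r/t)^m+1$ with $a,b\le r/t$ gives
\[
\lvert\overline{\nabla}^m p(t^2,x,y)\rvert\lesssim t^{-m}\big((r/t)^m+1\big)\,p_{\mathbb{R}^k}(t^2)\,p_N(t^2).
\]
The Gaussian bound then yields \eqref{eq:est_nabla_p}, \eqref{eq:est_nabla2_p} and the third-order estimate.

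\textbf{Main obstacle.} The delicate point is the regime $t>1$ for the $N$ factor. One needs the Gaussian factor in the vertical variable for free, and this holds only because $N$ is bounded. One also needs the exponential decay $e^{-\lambda_2 t^2/2}$ from \eqref{eq:p_N_for_large_t} to absorb the lost powers $t^{-i}$ while still retaining the Euclidean Gaussian in $x'$. I would check this matching carefully, in particular that $e^{-\lambda_2 t^2/2}\lesssim t^{-i}$ uniformly and that the constant $c$ can be chosen uniformly in both regimes.
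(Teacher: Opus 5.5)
Your proposal is correct and follows essentially the paper's proof. Both arguments use the product structure \eqref{eq:product_structure_kernel}, the Li--Yau and Hsu bounds for $t\le 1$, the spectral bound \eqref{eq:p_N_for_large_t} to absorb the powers $t^{-i}$ for $t>1$, and $r\eqsim \lvert x'-y'\rvert + d_N(x'',y'')$ to combine the factors. One small point: for $t>1$, $i\ge 1$, writing your ``bounded quantity'' as $\lesssim p_N(t^2,x'',y'')$ uses the lower bound $\inf_{t>1}p_N>0$ from \cref{thm:LiYau_estimates}, as the paper does. Comparing directly with $p_0$, using that $d_N$ is bounded, works equally well.
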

\begin{proof} 
  We differentiate~\eqref{eq:product_structure_kernel} and use the estimates for the kernel on $\mathbb{R}^k$ and the kernel $p_N$ on $N$.
  For convenience, we recall here the estimates for the heat kernel on $\mathbb{R}^k$ with elliptic scaling.
  Let $g_{t^2}(x)$ be the rescaled Gaussian given by 
  \begin{equation*}
    g_{t^2}(x) \coloneqq p_{\mathbb{R}^{k}}(t^2, x, 0) = \frac{1}{(\sqrt{4 \pi} t)^{k}} \exp\Big( - \frac{\lvert x \rvert^2}{4 t^2} \Big).
  \end{equation*}
  Then we have
  \begin{align*}  
    \lvert \nabla p_{\mathbb{R}^k}(t^2,x,0) \rvert & \lesssim g_{t^2}(x) \tfrac{\lvert x \rvert}{t^2} \\
    \lvert \overline{\nabla}^2 p_{\mathbb{R}^k}(t^2,x,0) \rvert & \lesssim g_{t^2}(x) \tfrac{1}{t^2} \big(\lvert \tfrac{ x }{t}\rvert^2 + 1 \big) \\
    \lvert \overline{\nabla}^3 p_{\mathbb{R}^k}(t^2,x,0) \rvert & \lesssim g_{t^2}(x) \tfrac{1}{t^3} \big(\lvert \tfrac{ x }{t} \rvert^3+ \lvert \tfrac{ x }{t}\rvert\big).
  \end{align*}
  For the estimate of $\nabla p(t^2,x,y)$,
  we differentiate $p(t^2,x,y)$ using the product structure of the kernel,
  and apply~\eqref{eq:LiYau_estimates}, together with~\eqref{eq:Hsu_est} for small times and~\eqref{eq:p_N_for_large_t} for large times, to obtain
    \begin{equation}\label{eq:estimate_p_for_small_and_large_t}
    \lvert \nabla p(t^2,x,y) \rvert \lesssim
    \begin{cases}
      \big( \frac{\lvert x' - y'\rvert}{t^2} + \frac{d_N(x'', y'')}{t^2}+\frac{1}{t} \big) p(t^2,x,y) , \quad t \le 1 \\
      \big( \frac{\lvert x' - y'\rvert}{t^2} + e^{-\lambda_2 t/2} \big) p(t^2,x,y) , \quad t > 1.
    \end{cases}
  \end{equation}
  Noting that $r = d(x,y) \eqsim \lvert x' - y'\rvert + d_N(x'', y'')$ and $e^{-\lambda_2 t/2}\lesssim 1/t$ for $t>1$ gives the stated estimate.
  The higher estimates are derived similarly, 
  using that $(r/t)^j \lesssim 1+ (r/t)^n$ for $0<j<n$.

\end{proof}

%% file: sections/square_function.tex
The proof of \cref{lemma:weighted_square_function_est_cylinder} 
builds on heat kernel estimates,
and the weighted boundedness of another square function.
Since the cylinder satisfies Gaussian upper bound,
quantitative weighted estimates this auxiliary $g$-function follows from~\cite[Proposition 3.7]{BBR20}.
We include a proof below for completeness.

\begin{lemma}\label{lemma:auxiliary_g_function}
  The map $u\mapsto t^2(-\Delta) e^{t^2\Delta}u:\mathcal{C}\to L^2(\mathbb{R}_+,\mathrm{d}t/t)$, 
  is an operator-valued Calderón--Zygmund operator, 
  and the quadratic estimate
    \begin{equation}\label{eq:g_function_est_on_manifold}
      \int_0^\infty \lVert t^2(-\Delta) e^{t^2\Delta} u\rVert_{L^2(\mathcal{C},w)}^2  \frac{\D{t}}t \lesssim \lVert u \rVert_{L^2(\mathcal{C},w)}^2
    \end{equation}
    holds for any Muckenhoupt weight $w \in A_2(\mathcal{C})$.
  \end{lemma}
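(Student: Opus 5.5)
We treat the map $u\mapsto \big(t^2(-\Delta)e^{t^2\Delta}u\big)_{t>0}$ as a singular integral operator $T$ from scalar functions to functions with values in the Hilbert space $Y\coloneqq L^2(\mathbb{R}_+,\mathrm{d}t/t)$. We then invoke the vector-valued weighted theory for Calderón--Zygmund operators on the space of homogeneous type $(\mathcal{C},\mathrm{d},\mu)$, namely \cite[Theorem 6.1]{Lorist}, as in the proof of \cref{lemma:Riesz_bounds}. By \cref{def:CZ}, with $X=\mathbb{C}$ and $Y$ as above, three things need to be checked.

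\emph{Unweighted $L^2$ boundedness.} Since $-\Delta$ is self-adjoint and nonnegative on $L^2(\mathcal{C},\mu)$, the spectral theorem gives
\[
\int_0^\infty \lVert t^2(-\Delta)e^{t^2\Delta}u\rVert_{L^2}^2\frac{\mathrm{d}t}{t}=\int_0^\infty \lvert s^2e^{-s^2}\rvert^2\frac{\mathrm{d}s}{s}\,\lVert u\rVert_{L^2}^2 ,
\]
because $\lambda\mapsto\int_0^\infty (t^2\lambda)^2e^{-2t^2\lambda}\,\mathrm{d}t/t$ is constant in $\lambda>0$. On $\nul(\Delta)$ the integrand vanishes, so $T\colon L^2(\mathcal{C})\to L^2(\mathcal{C};Y)$ is bounded.

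\emph{Kernel estimates.} The kernel is $k(x,y)=\big(t^2\partial_s p(s,x,y)|_{s=t^2}\big)_{t>0}=\big(t^2\Delta_x p(t^2,x,y)\big)_{t>0}$. Its size is controlled by \eqref{eq:est_nabla2_p} in \cref{lemma:pointwise_bounds_heat_kernel}, since $\lvert\Delta_x p\rvert\lesssim\lvert\overline{\nabla}^2p\rvert$. Thus $\lvert k_t(x,y)\rvert\lesssim p_0(t^2,r)(1+r^2/t^2)$, and
\[
\lVert k(x,y)\rVert_Y^2\lesssim\int_0^\infty \frac{e^{-2r^2/(ct^2)}}{\min(t^d,t^k)^2}\Big(1+\tfrac{r^2}{t^2}\Big)^2\frac{\mathrm{d}t}{t}.
\]
We substitute $s=r/t$ as in the Riesz proof. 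This gives $\max(r^{-d},r^{-k})^2\int_0^\infty e^{-2s^2/c}\max(s^d,s^k)^2(1+s^2)^2\,\mathrm{d}s/s\eqsim\mu(B(x,r))^{-2}$, which is \eqref{eq:kernel_estimate_CZ_SHT}.

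For regularity we need $\nabla_x$ and $\nabla_y$ of $t^2\Delta_xp(t^2,x,y)$, i.e.\ third covariant derivatives of $p$. In the $x$ variable this is bounded by the $\overline{\nabla}^3$ estimate of \cref{lemma:pointwise_bounds_heat_kernel}, giving an extra factor $t^{-1}(1+r/t)$. The same computation then yields $r^{-1}\mu(B(x,r))^{-1}$, which is \eqref{eq:regularity_kernel_estimate_CZ_SHT}.

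\emph{Main obstacle.} The delicate point is the derivative in $y$ of $\Delta_x p$, i.e.\ mixed derivatives. Here I would use the symmetry $p(t,x,y)=p(t,y,x)$ and $\Delta_xp=\Delta_yp=\partial_tp$, so that $\nabla_y\Delta_xp=\nabla_y\Delta_yp$, a pure third derivative in $y$. The product structure \eqref{eq:product_structure_kernel} makes this explicit: derivatives split into Gaussian derivatives on $\mathbb{R}^k$ times derivatives of $p_N$. These are handled by \eqref{eq:Hsu_est} for $t\le1$ and by \eqref{eq:p_N_for_large_t} for $t>1$, exactly as in the proof of \cref{lemma:pointwise_bounds_heat_kernel}, where the large-time decay $e^{-\lambda_2t/2}\lesssim t^{-1}$ is what makes the vertical derivatives harmless.

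Once these estimates are in place, \cite[Theorem 6.1]{Lorist} for Hilbert-space-valued Calderón--Zygmund operators yields $\lVert Tu\rVert_{L^2(\mathcal{C};Y,w)}\lesssim\lVert u\rVert_{L^2(\mathcal{C},w)}$ for $w\in A_2(\mathcal{C})$, which is \eqref{eq:g_function_est_on_manifold}.
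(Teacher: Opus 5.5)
Your proposal is correct and follows the paper's proof essentially step by step: the spectral theorem gives the unweighted bound, the $\overline{\nabla}^2 p$ and $\overline{\nabla}^3 p$ estimates of \cref{lemma:pointwise_bounds_heat_kernel} with the substitution $s=r/t$ give the size and regularity estimates, and \cite[Theorem 6.1]{Lorist} concludes. Your symmetry remark $\nabla_y\Delta_x p=\nabla_y\Delta_y p$, which handles the $y$-regularity, is a useful detail that the paper leaves implicit.
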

  \begin{proof}
    We need to show the unweighted bound and the size and regularity estimates in \cref{def:CZ}.
    Then, by the $A_2$-theorem for operator-valued for Calderón--Zygmund operators in a space of homogeneous type \cite[Theorem 6.1]{Lorist}, the bound~\eqref{eq:g_function_est_on_manifold} follows.

    The $L^2$-boundedness follows by the spectral theorem for self-adjoint operators.
The size estimate~\eqref{eq:kernel_estimate_CZ_SHT}
    follows from the heat kernel estimate in \cref{lemma:pointwise_bounds_heat_kernel} and the
    calculation
\begin{align*}
  \int_0^\infty \lvert t^2 \overline{\nabla}^2 p(t^2,x,y)\rvert^2 \frac{\mathrm{d}t}{t} & \lesssim \int_0^\infty \frac{\exp\big(-\frac{2 r^2}{ct^2} \big)}{\min(t^d,t^k)^2} \big(\lvert\tfrac{r}{t}\rvert^2 + 1 \big)^2 \frac{\mathrm{d}t}{t} \\
            & = \int_0^\infty e^{-\tfrac{2 s^2}{c}} \max\Big(\Big(\frac{s}{r}\Big)^d,\Big(\frac{s}{r}\Big)^k\Big)^2 (s^2 + 1)^2 \frac{\mathrm{d}s}{s} \\
  & \lesssim \max(r^{-d},r^{-k})^2 \eqsim \mu\big(B(x, r)\big)^{-2}.
\end{align*}
For the regularity estimate, we bound
\begin{align*}
  \int_0^\infty \lvert t^2 \overline{\nabla}^3 p(t^2,x,y)\rvert^2 \frac{\mathrm{d}t}{t} & \lesssim \int_0^\infty \frac{\exp\big(-\frac{2 r^2}{ct^2} \big)}{\min(t^d,t^k)^2} \big(\lvert \tfrac{r}{t} \rvert^3+ \lvert \tfrac{r}{t}\rvert\big)^2 \frac{1}{t^2} \frac{\mathrm{d}t}{t} \\
              & = \int_0^\infty e^{-\tfrac{2 s^2}{c}} \max\Big(\Big(\frac{s}{r}\Big)^d,\Big(\frac{s}{r}\Big)^k\Big)^2 (s^3 + s)^2 \frac{s^2}{r^2}\frac{\mathrm{d}s}{s} \\
  & \lesssim r^{-2} \max(r^{-d},r^{-k})^2 \eqsim r^{-2} \mu\big(B(x, r)\big)^{-2}.
\end{align*}
This concludes the proof.
\end{proof}

\begin{proof}[Proof of \cref{lemma:weighted_square_function_est_cylinder}]
  The heat kernel on the cylinder satisfies Gaussian upper bounds by \cref{lemma:pointwise_bounds_heat_kernel}.  
  The Calderón reproducing formula
  \begin{equation*}
    u = c \int_0^\infty Q_s^2 u \frac{\D{s}}s
  \end{equation*}
  holds with $ Q_s \coloneqq s(-\Delta)^{1/2} e^{s^2\Delta} $ and some finite constant $c > 0$. 
  The square function bound~\eqref{eq:weighted_square_function_est_cylinder} that we want to show,
  follows by Schur estimates as in~\cite[Theorem 4.6.3]{MR2463316}
  once we show that
    \begin{equation}\label{eq:Schur_bound_to_show}
      \lVert t s (-\Delta) (I - t^2 \Delta)^{-1} e^{s^2\Delta} \rVert_{L^2(w) \to L^2(w)} \lesssim \min\Big\{\frac{s}t,\frac{t}s\Big\}.
    \end{equation}
    Below, the suffix of the operator norm is implicit and it will be omitted.
    We bound
    \begin{equation}\label{eq:bound_for_Schur}
      \lVert t s (-\Delta) (I - t^2 \Delta)^{-1} e^{s^2\Delta} \rVert \le
      \begin{cases}
        \frac{t}s \, \lVert (I - t^2 \Delta)^{-1} \rVert \, \lVert s^2 (-\Delta)  e^{s^2\Delta} \rVert & \text{if } t < s, \\
        \frac{s}t \, \lVert t^2(-\Delta)(I - t^2 \Delta)^{-1} \rVert \, \lVert e^{s^2\Delta} \rVert  & \text{if } s < t,
      \end{cases}
    \end{equation}
    where all norms are the weighted ones.
    All four operators on the right are bounded, uniformly in $t,s$. To see this, we note that $e^{s^2\Delta}$
    and $s^2 (-\Delta)  e^{s^2\Delta}$ are 
    Calderón--Zygmund operators, with bounds uniform in $s$, and estimates (but for scalar operators) as in \cref{lemma:auxiliary_g_function} apply.
    For the resolvent operators, we conclude using 
    the subordination formula
      \begin{equation*}
        \frac{1}{I - t^2\Delta} = \int_0^\infty e^{-s} e^{st^2\Delta}\D{s} 
      \end{equation*}
      and the identity
      $t^2\Delta(I - t^2 \Delta)^{-1} = I - (I - t^2 \Delta)^{-1}$.
  \end{proof}

%% file: sections/Beurling.tex
\begin{proof}[Proof of \cref{lemma:Beurling_bounds}]
  We show that the Beurling transform is a Calderón--Zygmund operator on the cylinder.
  Consider first the unweighted bounds $w=1$.
  Write $\overline{\nabla}\nabla(-\Delta)^{-1}f=u$ on $\mathcal{C}$.
  Applying the Bochner identity
  \begin{equation*}
    \int_{\mathcal{C}} (|\text{div } v|^2+ |\text{curl } v|^2 ) \D{y}= \int_{\mathcal{C}} |\overline{\nabla}v|^2 \D{y}
    + \int_{\mathcal{C}} ( \text{Ric}(v), v) \D{y}
  \end{equation*}
    to the vector field 
    $v=\nabla(-\Delta)^{-1}f$
    gives the estimate   
    \begin{equation*}
      \|u\|_{L^2(T\mathcal{C}\times T\mathcal{C})}^2
      \lesssim \|f\|_{L^2(\mathcal{C})}^2
      - \int_{\mathcal{C}}
      ( \text{Ric}(v), v) \D{y}.
    \end{equation*}
    For a proof of the well known Bochner identity, see
    for example~\cite[Proposition 11.5.9]{zbMATH07139515},
    and set $F=v$ there.
    To bound the second term, we split $v=(v',v_N)$ into horizontal and vertical parts, and note that on the cylinder $\mathcal{C}$ we have $\text{Ric}(v')=0$.
    This gives the bound   
    \begin{equation*}
      \|u\|_{L^2(T\mathcal{C}\times T\mathcal{C})}^2  \lesssim \|f\|_{L^2(\mathcal{C})}^2 + \|v_N\|_{L^2(T\mathcal{C})}^2.
    \end{equation*}
    By writing $\Delta= \Delta_{\mathcal{C}}
    = \Delta_{\R^k}+ \Delta_N$, we have    
    \begin{equation*}
      \|v_N\rVert_{L^2(\mathcal{C})} = \|\nabla_y (-\Delta)^{-1} f\rVert_{L^2(\mathcal{C})}
      =\|(-\Delta_N)^{1/2}(-\Delta_{\R^k}- \Delta_N)^{-1}f\rVert_{L^2(\mathcal{C})}.
    \end{equation*}
    Since the positive operators 
    $-\Delta_{\R^k}$ and $-\Delta_N$ commute,
    we can simultaneously apply the Fourier 
    transform in $\R^k$ and expand in Laplace eigenfunctions on $N$ to conclude that
    \begin{equation*}
      \|(-\Delta_N)^{1/2}(-\Delta_{\R^k}- \Delta_N)^{-1}\|\lesssim \lambda_2^{-1/2},
    \end{equation*}
    where $\lambda_2$ is the first non-zero eigenvalue of $-\Delta_N$.
    This proves the unweighted estimate 
    $\|u\|_{L^2} \lesssim \|f\|_{L^2}$ of the Beurling transform.
  
    It remains to show size and regularity estimates of the kernel.
  We write the Beurling transform in terms of the heat semigroup as
  \begin{equation*}
    \overline{\nabla} \nabla (-\Delta)^{-1} = 2 \int_0^\infty t^2 \overline{\nabla} \nabla e^{t^2\Delta} \frac{\D{t}}{t}.
  \end{equation*}
  The resulting kernel to estimate is
  \begin{equation*}
    k(x,y) = \int_0^\infty t \overline{\nabla} \nabla p(t^2,x,y) \D{t}.
  \end{equation*}  
  The decay estimate~\eqref{eq:kernel_estimate_CZ_SHT} and regularity estimate~\eqref{eq:regularity_kernel_estimate_CZ_SHT}
  for $k(x,y)$ again follow from the heat kernel estimates in \cref{lemma:pointwise_bounds_heat_kernel}.
  This concludes the proof.
\end{proof}

\begin{proof}[Proof of \cref{lemma:ygradest}]
  We show that indeed $\nabla_y(-\Delta)^{-1}$ is a Calderón--Zygmund operator on the cylinder.
  The unweighted estimate was proved in the above 
  proof of \cref{lemma:Beurling_bounds}, so it
  remains to prove the kernel estimates
  \eqref{eq:kernel_estimate_CZ_SHT} and \eqref{eq:regularity_kernel_estimate_CZ_SHT},
  for which we write
  \begin{equation*}
    \nabla_y (-\Delta)^{-1} = 2 \int_0^\infty t^2 \nabla_y e^{t^2\Delta} \frac{\D{t}}{t}.
  \end{equation*}
  Using the heat kernel estimates for $e^{t^2\Delta}$ from \cref{lemma:pointwise_bounds_heat_kernel}, it is straightforward to conclude.
  Technically one estimates the integrals $\int_0^1 \D{t}$ and $\int_1^\infty \D{t}$
  separately, and verifies the kernel estimates
  for $r<1$ and $r>1$ separately.
  We omit the details and only note the key point that it is necessary to have at least one derivative on $p_N$ for the 
  estimates to work out in that we can apply
  \eqref{eq:p_N_for_large_t}.
\end{proof}